\definecolor{darkspringgreen}{rgb}{0.09, 0.45, 0.27}
\newtheorem{thm}{Theorem}[subsection]
\newtheorem{cor}[thm]{Corollary}
\newtheorem{lem}[thm]{Lemma}
\newtheorem{prop}[thm]{Proposition}
\newtheorem{conj}[thm]{Conjecture}
\theoremstyle{definition}
\theoremstyle{remark}
\newtheorem{rem}[thm]{Remark}
\newtheorem{rems}[thm]{Remarks} 
\newcommand{\lemref}[1]{Lemma~\ref{#1}}
\newcommand{\nc}{\newcommand}
\nc{\renc}{\renewcommand} \nc{\ssec}{\subsection}
\nc{\sssec}{\subsubsection}
\nc{\on}{\operatorname} \nc{\wh}{\widehat}
\nc\ol{\overline} \nc\ul{\underline} \nc\wt{\widetilde}
\newcommand{\red}[1]{{\color{red}#1}}
\nc{\BA}{{\mathbb{A}}} \nc{\BC}{{\mathbb{C}}} \nc{\BF}{{\mathbb{F}}}
\nc{\BD}{{\mathbb{D}}} \nc{\BG}{{\mathbb{G}}} \nc{\BQ}{{\mathbb{Q}}}
\nc{\BM}{{\mathbb{M}}} \nc{\BN}{{\mathbb{N}}} \nc{\BO}{{\mathbb{O}}}
\nc{\BP}{{\mathbb{P}}} \nc{\BR}{{\mathbb{R}}}
\nc{\BZ}{{\mathbb{Z}}} \nc{\BS}{{\mathbb{S}}} \nc{\BW}{{\mathbb{W}}}
\nc{\CA}{{\mathcal{A}}} \nc{\CB}{{\mathcal{B}}} \nc{\CalD}{{\mathcal{D}}}
\nc{\CE}{{\mathcal{E}}} \nc{\CF}{{\mathcal{F}}}
\nc{\CG}{{\mathcal{G}}} \nc{\CH}{{\mathcal{H}}}
\nc{\CI}{{\mathcal{I}}} \nc{\CK}{{\mathcal{K}}} \nc{\CL}{{\mathcal{L}}}
\nc{\CM}{{\mathcal{M}}} \nc{\CN}{{\mathcal{N}}}
\nc{\CO}{{\mathcal{O}}} \nc{\CP}{{\mathcal{P}}}
\nc{\CQ}{{\mathcal{Q}}} \nc{\CR}{{\mathcal{R}}}
\nc{\CS}{{\mathcal{S}}} \nc{\CT}{{\mathcal{T}}}
\nc{\CU}{{\mathcal{U}}} \nc{\CV}{{\mathcal{V}}}  \nc{\CY}{{\mathcal Y}}
\nc{\CW}{{\mathcal{W}}} \nc{\CZ}{{\mathcal{Z}}}
\nc{\cM}{{\check{\mathcal M}}{}} \nc{\csM}{{\check{\mathcal A}}{}}
\nc{\oM}{{\overset{\circ}{\mathcal M}}{}}
\nc{\obM}{{\overset{\circ}{\mathbf M}}{}}
\nc{\oCA}{{\overset{\circ}{\mathcal A}}{}}
\nc{\obA}{{\overset{\circ}{\mathbf A}}{}}
\nc{\ooM}{{\overset{\circ}{M}}{}}
\nc{\osM}{{\overset{\circ}{\mathsf M}}{}}
\nc{\vM}{{\overset{\bullet}{\mathcal M}}{}}
\nc{\nM}{{\underset{\bullet}{\mathcal M}}{}}
\nc{\oD}{{\overset{\circ}{\mathcal D}}{}}
\nc{\obD}{{\overset{\circ}{\mathbf D}}{}}
\nc{\oA}{{\overset{\circ}{\mathbb A}}{}}
\nc{\op}{{\overset{\bullet}{\mathbf p}}{}}
\nc{\cp}{{\overset{\circ}{\mathbf p}}{}}
\nc{\oU}{{\overset{\bullet}{\mathcal U}}{}}
\nc{\ofZ}{{\overset{\circ}{\mathfrak Z}}{}}
\nc{\ff}{{\mathfrak{f}}} \nc{\fv}{{\mathfrak{v}}}
\nc{\fa}{{\mathfrak{a}}} \nc{\fb}{{\mathfrak{b}}}
\nc{\fd}{{\mathfrak{d}}} \nc{\fe}{{\mathfrak{e}}}
\nc{\fg}{{\mathfrak{g}}} \nc{\fgl}{{\mathfrak{gl}}}
\nc{\fh}{{\mathfrak{h}}} \nc{\fri}{{\mathfrak{i}}}
\nc{\fj}{{\mathfrak{j}}} \nc{\fk}{{\mathfrak{k}}} \nc{\fl}{{\mathfrak{l}}}
\nc{\fm}{{\mathfrak{m}}} \nc{\fn}{{\mathfrak{n}}}
\nc{\ft}{{\mathfrak{t}}} \nc{\fu}{{\mathfrak{u}}}
\nc{\fw}{{\mathfrak{w}}} \nc{\fz}{{\mathfrak{z}}}
\nc{\fp}{{\mathfrak{p}}} \nc{\fq}{{\mathfrak{q}}} \nc{\frr}{{\mathfrak{r}}}
\nc{\fs}{{\mathfrak{s}}} \nc{\fsl}{{\mathfrak{sl}}}
\nc{\hsl}{{\widehat{\mathfrak{sl}}}}
\nc{\hgl}{{\widehat{\mathfrak{gl}}}}
\nc{\hg}{{\widehat{\mathfrak{g}}}}
\nc{\chg}{{\widehat{\mathfrak{g}}}{}^\vee}
\nc{\hn}{{\widehat{\mathfrak{n}}}}
\nc{\chn}{{\widehat{\mathfrak{n}}}{}^\vee}
\nc{\fA}{{\mathfrak{A}}} \nc{\fB}{{\mathfrak{B}}} \nc{\fC}{{\mathfrak{C}}}
\nc{\fD}{{\mathfrak{D}}} \nc{\fE}{{\mathfrak{E}}}
\nc{\fF}{{\mathfrak{F}}} \nc{\fG}{{\mathfrak{G}}} \nc{\fH}{{\mathfrak{H}}}
\nc{\fI}{{\mathfrak{I}}} \nc{\fJ}{{\mathfrak{J}}}
\nc{\fK}{{\mathfrak{K}}} \nc{\fL}{{\mathfrak{L}}}
\nc{\fM}{{\mathfrak{M}}} \nc{\fN}{{\mathfrak{N}}}
\nc{\frP}{{\mathfrak{P}}} \nc{\fQ}{{\mathfrak{Q}}}
\nc{\fS}{{\mathfrak{S}}} \nc{\fT}{{\mathfrak{T}}} \nc{\fU}{{\mathfrak{U}}}
\nc{\fV}{{\mathfrak{V}}} \nc{\fW}{{\mathfrak{W}}}
\nc{\fX}{{\mathfrak{X}}} \nc{\fY}{{\mathfrak{Y}}}
\nc{\fZ}{{\mathfrak{Z}}}
\nc{\ba}{{\mathbf{a}}}
\nc{\bb}{{\mathbf{b}}} \nc{\bc}{{\mathbf{c}}}
\nc{\be}{{\mathbf{e}}} \nc{\bj}{{\mathbf{j}}} \nc{\bm}{{\mathbf{m}}}
\nc{\bn}{{\mathbf{n}}} \nc{\bp}{{\mathbf{p}}}
\nc{\bq}{{\mathbf{q}}} \nc{\br}{{\mathbf{r}}} \nc{\bt}{{\mathbf{t}}}
\nc{\bfu}{{\mathbf{u}}} \nc{\bv}{{\mathbf{v}}}
\nc{\bx}{{\mathbf{x}}} \nc{\by}{{\mathbf{y}}} \nc{\bz}{{\mathbf{z}}}
\nc{\bw}{{\mathbf{w}}} \nc{\bA}{{\mathbf{A}}}
\nc{\bB}{{\mathbf{B}}} \nc{\bC}{{\mathbf{C}}}
\nc{\bD}{{\mathbf{D}}} \nc{\bF}{{\mathbf{F}}} \nc{\bG}{{\mathbf{G}}}
\nc{\bH}{{\mathbf{H}}} \nc{\bI}{{\mathbf{I}}} \nc{\bJ}{{\mathbf{J}}}
\nc{\bK}{{\mathbf{K}}} \nc{\bM}{{\mathbf{M}}} \nc{\bN}{{\mathbf{N}}}
\nc{\bO}{{\mathbf{O}}} \nc{\bS}{{\mathbf{S}}} \nc{\bT}{{\mathbf{T}}}
\nc{\bU}{{\mathbf{U}}} \nc{\bV}{{\mathbf{V}}} \nc{\bW}{{\mathbf{W}}}
\nc{\bX}{{\mathbf{X}}}
\nc{\bY}{{\mathbf{Y}}} \nc{\bP}{{\mathbf{P}}}
\nc{\bZ}{{\mathbf{Z}}} \nc{\bh}{{\mathbf{h}}}
\nc{\sA}{{\mathsf{A}}} \nc{\sB}{{\mathsf{B}}}
\nc{\sC}{{\mathsf{C}}} \nc{\sD}{{\mathsf{D}}}
\nc{\sE}{{\mathsf{E}}} \nc{\sF}{{\mathsf{F}}} \nc{\sG}{{\mathsf{G}}}
\nc{\sI}{{\mathsf{I}}} \nc{\sK}{{\mathsf{K}}} \nc{\sL}{{\mathsf{L}}}
\nc{\sfm}{{\mathsf{m}}} \nc{\sM}{{\mathsf{M}}} \nc{\sO}{{\mathsf{O}}}
\nc{\sQ}{{\mathsf{Q}}} \nc{\sP}{{\mathsf{P}}}
\nc{\sT}{{\mathsf{T}}} \nc{\sZ}{{\mathsf{Z}}}
\nc{\sV}{{\mathsf{V}}} \nc{\sW}{{\mathsf{W}}}
\nc{\sfp}{{\mathsf{p}}} \nc{\sq}{{\mathsf{q}}} \nc{\sr}{{\mathsf{r}}}
\nc{\st}{{\mathsf{t}}} \nc{\sfb}{{\mathsf{b}}}
\nc{\sfc}{{\mathsf{c}}} \nc{\sd}{{\mathsf{d}}}
\nc{\sz}{{\mathsf{z}}}
\nc{\tA}{{\widetilde{\mathbf{A}}}}
\nc{\tB}{{\widetilde{\mathcal{B}}}}
\nc{\tg}{{\widetilde{\mathfrak{g}}}} \nc{\tG}{{\widetilde{G}}}
\nc{\TM}{{\widetilde{\mathbb{M}}}{}}
\nc{\tO}{{\widetilde{\mathsf{O}}}{}}
\nc{\tU}{{\widetilde{\mathfrak{U}}}{}} \nc{\TZ}{{\tilde{Z}}}
\nc{\tx}{{\tilde{x}}} \nc{\tbv}{{\tilde{\bv}}}
\nc{\tfP}{{\widetilde{\mathfrak{P}}}{}} \nc{\tz}{{\tilde{\zeta}}}
\nc{\tmu}{{\tilde{\mu}}}
\nc{\urho}{\underline{\rho}} \nc{\uB}{\underline{B}}
\nc{\uC}{{\underline{\mathbb{C}}}} \nc{\ui}{\underline{i}}
\nc{\uj}{\underline{j}} \nc{\ofP}{{\overline{\mathfrak{P}}}}
\nc{\oB}{{\overline{\mathcal{B}}}}
\nc{\og}{{\overline{\mathfrak{g}}}} \nc{\oI}{{\overline{I}}}
\nc{\eps}{\varepsilon} \nc{\hrho}{{\hat{\rho}}}
\nc{\blambda}{{\boldsymbol{\lambda}}} \nc{\bmu}{{\boldsymbol{\mu}}} \nc{\bnu}{{\boldsymbol{\nu}}}
\nc{\one}{{\mathbf{1}}} \nc{\two}{{\mathbf{t}}}
\nc{\Sym}{\mathop{\operatorname{\rm Sym}}}
\nc{\Tot}{{\mathop{\operatorname{\rm Tot}}}}
\nc{\Spec}{\mathop{\operatorname{\rm Spec}}}
\nc{\Ker}{{\mathop{\operatorname{\rm Ker}}}}
\nc{\Isom}{{\mathop{\operatorname{\rm Isom}}}}
\nc{\Hilb}{{\mathop{\operatorname{\rm Hilb}}}}
\nc{\deeq}{{\mathop{\operatorname{\rm deeq}}}}
\nc{\End}{{\mathop{\operatorname{\rm End}}}}
\nc{\Ext}{{\mathop{\operatorname{\rm Ext}}}}
\nc{\Hom}{{\mathop{\operatorname{\rm Hom}}}}
\nc{\CHom}{{\mathop{\operatorname{{\mathcal{H}}\it om}}}}
\nc{\GL}{{\mathop{\operatorname{\rm GL}}}}
\nc{\gr}{{\mathop{\operatorname{\rm gr}}}}
\nc{\Id}{{\mathop{\operatorname{\rm Id}}}}
\nc{\perf}{{\mathop{\operatorname{\rm perf}}}}
\nc{\defi}{{\mathop{\operatorname{\rm def}}}}
\nc{\length}{{\mathop{\operatorname{\rm length}}}}
\nc{\supp}{{\mathop{\operatorname{\rm supp}}}}
\nc{\HC}{{\mathcal H}{\mathcal C}}
\nc{\pr}{{\operatorname{pr}}}
\nc{\Cliff}{{\mathsf{Cliff}}}
\nc{\loc}{{\operatorname{loc}}}
\nc{\Fl}{{\mathbf{Fl}}} \nc{\Ffl}{{\mathcal{F}\ell}}
\nc{\Fib}{{\mathsf{Fib}}}
\nc{\Coh}{{\mathsf{Coh}}} \nc{\FCoh}{{\mathsf{FCoh}}}
\nc{\Perf}{{\mathsf{Perf}}}
\nc{\reg}{{\text{\rm reg}}}
\nc{\gvee}{{\mathfrak g}^{\!\scriptscriptstyle\vee}}
\nc{\tvee}{{\mathfrak t}^{\!\scriptscriptstyle\vee}}
\nc{\nvee}{{\mathfrak n}^{\!\scriptscriptstyle\vee}}
\nc{\bvee}{{\mathfrak b}^{\!\scriptscriptstyle\vee}}
       \nc{\rhovee}{\rho^{\!\scriptscriptstyle\vee}}
\nc{\cplus}{{\mathbf{C}_+}} \nc{\cminus}{{\mathbf{C}_-}}
\nc{\cthree}{{\mathbf{C}_*}} \nc{\Qbar}{{\bar{Q}}}
\newcommand{\oV}{{\mathbf V}}
\newcommand{\ooV}{{\mathbf V}}
\newcommand\iso{\mathbin{\vphantom{j^{X^2}}\smash{\overset{\sim}{\vphantom{\rule{0pt}{0.20em}}\smash{\longrightarrow}}}}}
\nc{\Gtimes}{\vphantom{j^{X^2}}\smash{\overset{G}{\vphantom{\rule{0pt}{0.30em}}\smash{\times}}}}
\nc{\sGtimes}{\vphantom{j^{X^2}}\smash{\overset{\mathsf G}{\vphantom{\rule{0pt}{0.30em}}\smash{\times}}}}
\nc{\bOmega}{{\overline{\Omega}}}
\nc{\seq}[1]{\stackrel{#1}{\sim}}
\nc{\aff}{{\operatorname{aff}}}
\nc{\fin}{{\operatorname{fin}}}
\nc{\mir}{{\operatorname{mir}}}
\nc{\triv}{{\operatorname{triv}}}
\nc{\ext}{{\operatorname{ext}}}
\nc{\righ}{{\operatorname{right}}}
\nc{\lef}{{\operatorname{left}}}
\nc{\forg}{{\operatorname{forg}}}
\nc{\fid}{{\operatorname{fd}}}
\nc{\modu}{{\operatorname{-mod}}}
\nc{\Gr}{{\mathbf{Gr}}}
\nc{\FT}{{\operatorname{FT}}}
\nc{\Mat}{{\operatorname{Mat}}}
\nc{\MSt}{{\operatorname{MSt}}}
\nc{\sph}{{\operatorname{sph}}}
\nc{\GR}{{\mathbf{Gr}}}
\nc{\Perv}{{\operatorname{Perv}}}
\nc{\Rep}{{\operatorname{Rep}}}
\nc{\Ind}{{\operatorname{Ind}}}
\nc{\IC}{{\operatorname{IC}}}
\nc{\Bun}{{\operatorname{Bun}}}
\nc{\Proj}{{\operatorname{Proj}}}
\nc{\Stab}{{\operatorname{Stab}}}
\nc{\pt}{{\operatorname{pt}}}
\nc{\bfmu}{{\boldsymbol{\mu}}}
\nc{\bfomega}{{\boldsymbol{\omega}}}
\nc{\calM}{\mathcal M}
\nc{\calA}{\mathcal A}
\nc{\calO}{\mathcal O}
\nc{\CC}{\mathbb C}
\nc{\calN}{\mathcal N}
\nc{\grg}{\mathfrak g}
\nc{\tslash}{/\!\!/\!\!/}
\nc\grt{\mathfrak t}
\nc\bfM{\mathbf M}
\nc\bfN{\mathbf N}
\nc\Sig{\Sigma}
\nc\ZZ{\mathbb{Z}}
\nc\calC{\mathcal C}
\nc\calF{\mathcal F}
\nc\calX{\mathcal X}
\nc\calY{\mathcal Y}
\nc\QCoh{\operatorname{QCoh}}
\nc\IndCoh{\operatorname{IndCoh}}
\nc\Maps{\operatorname{Maps}}
\nc\Dmod{D-\operatorname{mod}}
\newcommand\Hecke{\operatorname{Hecke}}
\nc{\calD}{\mathcal D}
\nc\bfO{\mathbf O}
\renewcommand{\AA}{\mathbb A}
\nc\GG{\mathbb G}
\nc\calK{\mathcal K}
\nc{\calG}{\mathcal G}
\nc\RHom{\operatorname{RHom}}
\nc\Res{\operatorname{Res}}
\nc\Av{\operatorname{Av}}
\nc\grs{\mathfrak s}
\nc{\tilX}{\widetilde X}
\nc\calB{\mathcal B}
\nc\calS{\mathcal S}
\nc\calT{\mathcal T}
\nc\calZ{\mathcal Z}
\nc\LS{\operatorname{LocSys}}
\nc\bfL{\on{\mathbf L}}
\newcommand*\circled[1]
\newcommand{\raisemath}[1]{\mathpalette{\raisem@th{#1}}}
\newcommand{\raisem@th}[3]{\raisebox{#1}{$#2#3$}}
\nc{\binlim}[2][]{\def\@tempa{#1}\@ifnextchar^{\@binlim{#2}}{\@binlim{#2}^{}}}
\def\@binlim#1^#2{\mathbin{\@ifempty{#2}{\mathop{#1}}{\mathop{#1}\@xp\displaylimits\@tempa^{#2}}}}
\nc\cX{{\mathcal X}}
\newcommand{\dbkts}[1]{[\![#1]\!]}
\newcommand{\dprts}[1]{(\!(#1)\!)}
\nc\Gm{{\mathbb G_m}}
\renc\Hecke{\mathit{\CH\kern-.2ex ecke}}
\nc\Fq{\mathbb F_q}
\nc\bGO{{\bG_\bO}}
\nc\opp{{\on{op}}}
\nc\tbx{\binlim{\widetilde\boxtimes{}}}
\nc\phitau{\varphi\tau}
\newenvironment{i-ii-iii}{%
\begin{enumerate}
}%
{\end{enumerate}}
\nc\ceil[1]{\lceil#1\rceil}  \nc\floor[1]{\lfloor#1\rfloor}
\nc\Lie{\on{Lie}}
\def\arxiv#1{\href{http://arxiv.org/abs/#1}{\tt arXiv:#1}} \let\arXiv\arxiv
\nc\kap{\kappa}
\nc\gra{\mathfrak a}
\nc\gl{\mathfrak{gl}}
\nc\sTr{\operatorname{sTr}}
\nc\hatG{\widehat{G}}
\nc\calL{\mathcal L}
\nc\Whit{\operatorname{Whit}}
\nc\KL{\operatorname{KL}}
\newcommand\Tr{\operatorname{Tr}}
\newcommand\crit{\operatorname{crit}}
\newcommand\Killing{\operatorname{Killing}}
\newcommand\PP{\mathbb P}
\renewcommand{\subsection}{\@startsection{subsection}{2}{0pt}{-3ex
plus -1ex minus -0.2ex}{-2mm plus -0pt minus
-2pt}{\normalfont\bfseries}} \makeatother
\numberwithin{equation}{subsection}
\nc\mto{\mapsto }
\nc\en{\enspace }
\begin{document}
\author[A.Braverman]{Alexander Braverman}
\address{Department of Mathematics, University of Toronto and Perimeter Institute
of Theoretical Physics, Waterloo, Ontario, Canada, N2L 2Y5;
\newline Skolkovo Institute of Science and Technology}
\email{braval@math.toronto.edu}
\author[M.Finkelberg]{Michael Finkelberg}
\address{National Research University Higher School of Economics, Russian Federation,
  Department of Mathematics, 6 Usacheva st, 119048 Moscow;
\newline Skolkovo Institute of Science and Technology;
\newline Institute for the Information Transmission Problems}
\email{fnklberg@gmail.com}
\author[V.Ginzburg]{Victor Ginzburg}
\address{Department of Mathematics, University of Chicago, Chicago, USA}
\email{vityaginzburg@gmail.com}
\author[R.Travkin]{Roman Travkin}
\address{Skolkovo Institute of Science and Technology, Moscow, Russia}
\email{roman.travkin2012@gmail.com}

\title
{Mirabolic Satake equivalence and supergroups}
\dedicatory{To our friend Sasha Shen on the occasion of his 60th birthday}

\thanks{{\bf Mathematics Subject Classification (2020).}
14F08, (14D24, 14M15, 17B20).}

\thanks{{\bf Key words.} Satake equivalence, Mirabolic affine Grassmannian, Supergroups.}

\begin{abstract}
 We construct a mirabolic analogue of the geometric Satake equivalence.
We also prove an equivalence that relates representations of a supergroup
with the category of
$\GL(N-1,{\mathbb C}[\![t]\!])$-equivariant perverse sheaves on the affine Grassmannian of $\GL_N$.
We explain how our equivalences fit into a more general framework of conjectures due to Gaiotto
and to Ben-Zvi, Sakellaridis and Venkatesh.
\end{abstract}
\maketitle

\tableofcontents

\section{Introduction}
\label{intro}

\subsection{Reminder on  Geometric Satake} Let $\bF=\BC\dprts{t}\supset\BC\dbkts{t}=\bO$.
Throughout the paper, we fix an integer $N\geq 1$, set
$\bG_\bF=\GL(N,\bF)$, resp.\ $\bG_\bO=\GL(N,\bO)$, and let $\Gr=\bG_\bF/\bG_\bO$
be  the  affine Grassmannian of $\GL_N$. This is an ind-scheme equipped with
a natural action of the group $\bG_\bO\rtimes\BC^\times$, where $\BC^\times$ acts by loop rotation.
Let $D_{\bG_\bO}(\Gr)$, resp.\ $D_{\bG_\bO\rtimes\BC^\times}(\Gr)$, be the $\bG_\bO$-equivariant,
resp.\ $\bG_\bO\rtimes\BC^\times$-equivariant,
constructible derived category of $\Gr$.\footnote{Throughout the paper, we consider the sheaves with
complex coefficients (with a few technical exceptions).}
This is a monoidal category with respect to convolution (that coincides with fusion), cf.~\cite{MV}.

Let $\fgl_N$ be the complex vector space of $N\times N$-matrices
and let  $\GL_N$ act on $\fgl_N$ by conjugation.
Write $\Sym^\bullet(\fgl_N[-2])$ for the symmetric algebra of $\fgl_N$
viewed  as a  dg-algebra such that the space $\fgl_N$, of generators, is
placed in degree $2$ and the differential is equal to zero.
Let $D_\perf^{\GL_N}(\Sym^\bullet(\fgl_N[-2]))$
be  the triangulated category
of perfect $\GL_N$-equivariant dg-modules over $\Sym^\bullet(\fgl_N[-2])$,
localized with respect to quasiisomorphisms.
Tensor product of dg-modules gives this category a monoidal structure.
One of the versions of  {\em derived  Satake equivalence} proved in ~\cite{bf}
states that there is an equivalence
$D_\perf^{\GL_N}(\Sym^\bullet(\fgl_N[-2]))\iso D_{\bG_\bO}(\Gr)$,
of  triangulated monoidal categories.

It will be convenient for us to reformulate the above result as follows. Let $T^*\GL_N[2]$
be the shifted cotangent bundle on $\GL_N$, viewed  as a dg-scheme
equipped  with zero differential.
The action of  $\GL_N$ on  itself by left and right translations
induces a $\GL_N\times \GL_N$-action  on $T^*\GL_N[2]$.
Let $D_\perf^{\GL_N\times\GL_N}(T^*\GL_N[2])$  be the triangulated category of
$\GL_N\times\GL_N$-equivariant  perfect complexes of $\CO_{T^*\GL_N[2]}$-modules on $T^*\GL_N[2]$.
The fiber of   $T^*\GL_N[2]$ over $1\in \GL_N$ may (and will) be identified
with $(\fgl_N[-2])^*=\fgl_N^*[2]$.
Restriction to this fiber induces a monoidal equivalence
$D_\perf^{\GL_N\times\GL_N}(T^*\GL_N[2])\iso D_\perf^{\GL_N}(\Sym^\bullet(\fgl_N[-2]))$,
where the category on the right is identified with the triangulated category of
$\GL_N$-equivariant  perfect complexes on $(\fgl_N[-2])^*$.
Thus, the derived  Satake equivalence stated above may be interpreted as
 an equivalence
$D_\perf^{\GL_N\times\GL_N}(T^*\GL_N[2])\iso D_{\bG_\bO}(\Gr)$.

There is also a natural `quantum' counterpart of the latter equivalence,
where the category $D_\perf^{\GL_N\times\GL_N}(T^*\GL_N[2])$
is replaced by  an appropriately defined category of
asymptotic shifted weakly equivariant $D$-modules on $\GL_N$, and the
category $D_{\bG_\bO}(\Gr)$ is replaced by  $D_{\bG_\bO\rtimes\BC^\times}(\Gr)$, see~\cite{bf}.

\subsection{Mirabolic Satake category}
In the present paper we will be interested in a
mirabolic analogue of the above setting. To explain this,
fix an $N$-dimensional vector space $V$ and put
$\bV=V\otimes\bF$, resp. $\bV_0= V\otimes\bO$ and
${\vphantom{j^{X^2}}\smash{\overset{\circ}{\vphantom{\rule{0pt}{0.55em}}\smash{\mathbf V}}}}=
\bV\smallsetminus\{0\}$.
We identify $\bG=\GL_N$ with $\GL(V)$, so that $\bG_\bF$ acts on $\bV$ and
$\bG_\bO$ is the stabilizer of $\bV_0$.
Following~\cite{fgt}, the {\em mirabolic affine Grassmannian} is defined as
$\Gr\times\bV$. We let $\bG_\bO$ act on  $\Gr\times\bV$ diagonally. The orbits of $\bG_\bO$ in
$\Gr\times{\vphantom{j^{X^2}}\smash{\overset{\circ}{\vphantom{\rule{0pt}{0.55em}}\smash{\mathbf V}}}}$ are, unlike the case of $\bG_\bO$-orbits in $\Gr$, not finite dimensional.
Heuristically, these orbits  are
of semi-infinite type in the sense that  the `closure' of an orbit
projects onto a (finite dimensional) Schubert variety in $\Gr$ and onto a lattice in $\bV$.

In view of the above, defining the correct mirabolic anague of the equivariant derived Satake
category requires some care.
According to our definition, an object of this category is supported on the product of
a finite-dimensional Schubert variety in
$\Gr$ and a lattice inside the Tate vector space $\bV$; moreover, this object is pulled back from a
finite-dimensional quotient of this lattice. According to three possible choices
($!$-,$*$-, or $!*$-) of pull-back, one gets the three versions
$D_{!\bG_\bO}(\Gr\times\bV), D_{*\bG_\bO}(\Gr\times\bV), D_{!*\bG_\bO}(\Gr\times\bV)$ of
$\bG_\bO$-equivariant constructible derived categories on $\Gr\times\bV$.
These categories are related to each other by certain renormalization equivalences.

We equip the above categories with  monoidal structures given by
various types of convolution operation.
The convolution along the Grassmannian, the first factor in $\Gr\times\bV$,
is defined similarly to the case of  the usual Satake category.
The convolution along the second factor depends on the choice
of category. Specifically, the convolution operation $\srel!\oast$ in
$D_{!\bG_\bO}(\Gr\times\bV)$
involves the~{$!$-tensor} product of constructible sheaves on $\bV$.
The convolution operation $\srel**$ on $D_{*\bG_\bO}(\Gr\times\bV)$ involves
a $*$-push-forward along $+\colon \bV\times\bV\to\bV$, the map given by addition.
These two types of  convolution are related to each other
via Fourier Transform (along $\bV$).
Finally, we define a monoidal structure $\star$ on $D_{!*\bG_\bO}(\Gr\times\bV)$
via the fusion operation on a mirabolic analogue of the
Beilinson-Drinfeld Grassmannian.

The categories above have natural counterparts
 involving the action of $\BC^\times$ on
$\Gr$ by loop rotation. These are  $\BC[\hbar]$-linear categories where
$\BC[\hbar]=H^\bullet_{\BC^\times}(\pt)$.
Thus, there is a category
$D_{!\bG_\bO\rtimes\BC^\times}(\Gr\times\bV)$, resp.\ $D_{*\bG_\bO\rtimes\BC^\times}(\Gr\times\bV)$,
equipped with a similarly defined
monoidal structure  $\srel!\oast$, resp.\ $\srel**$.
The fusion operation $\star$ on $D_{!*\bG_\bO}(\Gr\times\bV)$
has {\em no} $\bG_\bO\rtimes\BC^\times$-equivariant counterpart, however.

\subsection{Mirabolic Satake equivalence}
\label{qua}
The category on the `other side' of mirabolic Satake equivalence
is  an appropriately defined triangulated category of  ``equivariant
asymptotic shifted $D$-modules'' on the vector space $\fgl_N$ of $N\times N$-matrices.
More formally, we introduce a $\BC[\hbar]$-algebra $\fD^\bullet$, a
graded version  of the algebra of differential operators on $\fgl_N$ (where $\deg\hbar=2$).
The action of $\GL_N$ on $\fgl_N$ by left and right multiplication induces
a $\GL_N\times\GL_N$-action on $\fD^\bullet$.
The relevant category is then defined to be the derived category of  weakly
$\GL_N\times\GL_N$-equivariant perfect dg-modules over $\fD^\bullet$,
where $\fD^\bullet$ is viewed as a dg-algebra with zero differential.
Similarly to the constructible story,
we actually define three versions $D_\perf^{\GL_N\times\GL_N}(\fD^\bullet_{1,1})$,
resp.\ $D_\perf^{\GL_N\times\GL_N}(\fD^\bullet_{0,2})$ and
$D_\perf^{\GL_N\times\GL_N}(\fD^\bullet_{2,0})$, of such a category that correspond
to three different choices of grading on our algebra.
The algebra  of asymptotic differential operators specializes at
$\hbar=0$ to the algebra $\BC[T^*\fgl_N]$. We denote the specialization of
$\fD^\bullet_{i,2-i}$ at $\hbar=0$ by $\fS^\bullet_{i,2-i}$.
Accordingly, the above defined  $\BC[\hbar]$-linear  categories specialize at
$\hbar=0$ to various versions of the derived category of $\GL_N\times\GL_N$-equivariant
coherent sheaves on shifted cotangent bundle $T^*\fgl_N$.

Next, we equip  the above defined categories with monoidal structures.
The  monoidal  structure  $\stackrel{A}*$ on  $D_\perf^{\GL_N\times\GL_N}(\fD^\bullet_{2,0})$
is defined as a convolution operation on $D$-modules associated
with the map $\fgl_N\times \fgl_N\to \fgl_N$, given by  matrix multiplication.
The  monoidal  structure  $\stackrel{B}*$ on
$D_\perf^{\GL_N\times\GL_N}(\fD^\bullet_{0,2})$ is defined as
${\mathsf{F}}^{-1}\circ(\stackrel{A}*)\circ(\mathsf{F}\x\mathsf{F})$,
where $\mathsf{F}$ is the functor of Fourier Transform on $D$-modules.
Each of these monoidal structures has a quasiclassical limit at $\hbar=0$, defined
as a convolution of coherent sheaves arising from a certain Lagrangian correspondence.
Finally, tensor product of coherent sheaves,
that is, the functor  $\CM,\CM'\mapsto\CM\otimes_{{\fS_{1,1}^\bullet}}\CM'$,
gives a monoidal structure on $D_\perf^{\GL_N\times\GL_N}(\fS^\bullet_{1,1})$.
This  monoidal structure has no counterpart for $D$-modules, i.e. for $\hbar\ne0$.

One of the main results of the present paper,
see~Theorems~\ref{main spherical mirabolic},\ref{main quantum},
states that one has the following
equivalences of  triangulated monoidal categories, called mirabolic Satake equivalences:
  \begin{align*}  \big(D_\perf^{\GL_N\times\GL_N}(\fD^\bullet_{2,0}),\ \stackrel{A}*\big)\ \  &\xrightarrow[{}^{\cong}]{\Phi^{2,0}_\hbar}
   \   \    \big(D_{!\bG_\bO\rtimes\BC^\times}(\Gr\times\oV),\ \stackrel{!}\oast\big);\\
 \big(D_\perf^{\GL_N\times\GL_N}(\fD^\bullet_{0,2}),\ \stackrel{B}*\big)\  \   &\xrightarrow[{}^{\cong}]{\Phi^{0,2}_\hbar}\  \
   \  \      \big(D_{*\bG_\bO\rtimes\BC^\times}(\Gr\times\oV),\ \stackrel{*}*\big);\\
   \big(D_\perf^{\GL_N\times\GL_N}({\fS^\bullet_{1,1}}),\
 \otimes_{_{{\fS_{1,1}^\bullet}}}\big)\  \  &\xrightarrow[{}^{\cong}]{\Phi^{1,1}}\  \
    \   \
       \big(D_{!*\bG_\bO}(\Gr\times\oV),\ \star\big).
\end{align*}
Furthermore, it turns out that
the  triangulated category $D_{!*\bG_\bO}(\Gr\times\bV)$ is equivalent
to the bounded  derived category of  the abelian category
$\Perv_{\bG_\bO}(\Gr\times\bV)$ of $\bG_\bO$-equivariant perverse sheaves
on $\Gr\times\bV$, the heart of the perverse t-structure on  $D_{!*\bG_\bO}(\Gr\times\bV)$.

\begin{rem}  The counterpart of the last statement in the case of
 the usual Satake category is false: the  triangulated category $D_{!*\bG_\bO}(\Gr)$
is not  equivalent
to the  derived category of  the  category
$\Perv_{\bG_\bO}(\Gr)$, which is well known to be a semisimple  abelian category.
\end{rem}

\begin{rem} One can view  the group  $\GL_N$ as a Zariski open subset
of the vector space $\fgl_N$,  of all $N\times N$-matrices.
Associated with the
 open  imbedding $\GL_N\into\fgl_N$,  one has a restriction functor on $D$-modules.
It turns out that the counterpart of this functor for constructible derived categories is
 a suitably defined version of restriction
with respect to   the `zero section'  $\Gr\times\{0\}\into\Gr\times\bV$. That is,
one has
a natural  functor   $D_{\bG_\bO}(\Gr\times\bV)$ $\to D_{\bG_\bO}(\Gr)$.
We will show that the later functor makes  the  derived
Satake category $D_{\bG_\bO}(\Gr)$  a localization of the mirabolic  derived Satake category ${D_{\bG_\bO}(\Gr\times\bV)}$.
Moreover, the mirabolic Satake equivalence is compatible with the standard Satake
equivalence in the sense that there is a commutative diagram of functors

\[
\xymatrix{
^{\text{asymptotic equivariant}} _{\enspace\text{$D$-modules on $\fgl_N$}}
\ \ar[rrrr]^<>(0.5){\text{mirabolic derived Satake}}_<>(0.5){\cong}
\ar[d]^<>(0.5){\text{restriction}}
&&&&
D_{\bG_\bO\rtimes\BC^\times}(\Gr\times\bV)\ \ar[d]^<>(0.5){\text{localization}}\\
\ ^{\text{asymptotic equivariant}} _{\enspace\text{$D$-modules on $\GL_N$}}
\ \ar[rrrr]^<>(0.5){\text{derived Satake}}_<>(0.5){\cong}&&&&
\ D_{\bG_\bO\rtimes\BC^\times}(\Gr).
}
\]
\end{rem}

\subsection{Conjectural Iwahori-equivariant version}
Let $\bI\subset\bG_\bO$ be an Iwahori subgroup and $\Fl:=\bG_\bF/\bI$ the affine
flag variety. In ~\cite{b}, Bezrukavnikov established an equivalence of $D_\bI(\Fl)$, the
$\bI$-equivariant constructible derived category of $\Fl$, and the
derived category of $\GL_N$-equivariant coherent sheaves on an appropriate dg-version of the
Steinberg variety.
Motivated by this result, we expect that there is a mirabolic counterpart of this equivalence.

To explain this, fix a pair $V_1,V_2$, of $N$-dimensional vector spaces
and let $\Ffl_i,\ i=1,2$, denote  the variety of complete flags in $V_i$.
Further, consider a dg-scheme with zero differential
\[\on{H}_\mir:=\Hom(V_1,V_2)[1]\times\Hom(V_2,V_1)[1]\times\Ffl_1\times\Ffl_2.\footnote{Here we view
  both $\Hom(V_1,V_2)$ and $\Hom(V_2,V_1)$ as {\em odd} vector spaces, so that
  the functions on $\Hom(V_1,V_2)[1]\times\Hom(V_2,V_1)[1]$
  (with grading disregarded) form really a symmetric (infinite-dimensional)
  algebra, not an exterior algebra.}\]
Write $A$, resp.\ $B$,
for an element of $\Hom(V_1,V_2)[1]$, resp.\ $\Hom(V_2,V_1)[1]$, and
$F_i=(F^{(1)}_i\subset F^{(2)}_i\subset\ldots\subset F^{(N)}_i=V_i)$ for an element of $\Ffl_i$.

We define the {\em mirabolic Steinberg scheme}   to be
a dg-subscheme $\on{St}_\mir$ of $\on{H}_\mir$
cut out by the equations saying that the flag $F_2$ is stable under the composition
$AB$ and  the flag $F_1$ is stable under the composition
$BA$. Thus the mirabolic  Steinberg scheme is a shifted variety of quadruples:
\[
\on{St}_\mir=\{(A,B, F _1, F _2)\in \on{H}_\mir\mid
AB(F^{(j)}_2)\subseteq F^{(j)}_2\en \& \en
BA(F^{(j)}_1)\subseteq F^{(j)}_1,\ \forall j\in [1,N]
\}.
\]
Let $D_\bI(\Fl\times\bV)$ be the $\bI$-equivariant constructible derived category of $\Fl\times\bV$.
We propose the following
\begin{conj} There exists an equivalence of triangulated categories
\[
D^{\GL(V_1)\times\GL(V_2)}  \on{Coh}(\on{St}_\mir)
\cong D_\bI(\Fl\times\bV).
\]
\end{conj}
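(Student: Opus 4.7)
\medskip

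The plan is to bootstrap the conjecture from the already-established mirabolic Satake equivalence (Theorem~\ref{main spherical mirabolic}) together with Bezrukavnikov's equivalence \cite{b}
\[
D_\bI(\Fl)\ \iso\ D^{\GL_N}\on{Coh}(\on{St}),
\]
where $\on{St}=\widetilde{\fgl_N}\times_{\fgl_N}\widetilde{\fgl_N}$ (with the standard shifts). The idea is that $D_\bI(\Fl\x\bV)$ should arise from $D_{!*\bG_\bO}(\Gr\x\bV)$ by ``adding a flag on each side of the convolution'', and symmetrically $\on{St}_\mir$ should arise from the spherical mirabolic coherent side by the analogous operation on flags. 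Accordingly the proof naturally splits into a constructible half and a coherent half, followed by matching.

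\emph{Constructible side.} First one should show that the projection $\pi\colon\Fl\to\Gr$ induces a monoidal action of $D_\bI(\Fl)$ (in either variable) on $D_\bI(\Fl\x\bV)$ by convolution along the flag factor, and that the natural functor
\[
D_\bI(\Fl)\ \otimes_{D_{\bGO}(\Gr)}\ D_{!*\bG_\bO}(\Gr\x\bV)\ \otimes_{D_{\bGO}(\Gr)}\ D_\bI(\Fl)^\opp\ \longrightarrow\ D_\bI(\Fl\x\bV)
\]
is an equivalence. This is a $\Fl\x\bV$-version of the module-category approach used in Bezrukavnikov's argument (it is really a form of geometric base change combined with the fact that $\Fl\to\Gr$ is a $\CB$-bundle). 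One has to choose the correct renormalization version on the mirabolic factor; as for the spherical Satake, the natural choice is the $!*$-version, so that the heart of the perverse $t$-structure governs the story.

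\emph{Coherent side.} In parallel, one should identify $\on{St}_\mir$ with a fiber product
\[
\on{St}_\mir\ \simeq\ \on{St}_1\ \times_{\fgl_{V_1}[2]\x\fgl_{V_2}[2]}\ T^*\!\Hom(V_1,V_2)[2]\ \times_{\fgl_{V_1}[2]\x\fgl_{V_2}[2]}\ \on{St}_2
\]
(as dg-schemes), where $\on{St}_i$ is the classical Steinberg for $\GL(V_i)$, the two maps from $T^*\Hom(V_1,V_2)=\Hom(V_1,V_2)\x\Hom(V_2,V_1)$ to $\fgl_{V_i}$ are $(A,B)\mapsto BA$ and $(A,B)\mapsto AB$ respectively, and all shifts are arranged so that $A,B$ live in odd degree. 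The defining equations of $\on{St}_\mir$ are precisely the cartesian conditions, so this is essentially a calculation. One then applies Bezrukavnikov's equivalence twice (on $V_1$ and on $V_2$) and the mirabolic Satake equivalence $\Phi^{1,1}$ on the middle factor to obtain exactly the relative tensor product described above, and conclude.

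\emph{Main obstacle.} The hardest step is the constructible one: the semi-infinite nature of $\bI$-orbits on $\Fl\x\bV$ doubles the renormalization difficulties already present in the spherical mirabolic setting, and one has to verify that the Iwahori averaging and the convolution with $D_\bI(\Fl)$ interact well with the choice of renormalization, so that the abstract relative tensor product really computes $D_\bI(\Fl\x\bV)$. A secondary technical point is the careful bookkeeping of cohomological and loop-rotation degrees in the dg-fiber-product description of $\on{St}_\mir$, so that it matches the shifts appearing in Bezrukavnikov's theorem and $\Phi^{1,1}$; in particular, the absence of a quantum ($\hbar$-deformed) version of $\Phi^{1,1}$ is consistent with $\on{St}_\mir$ being a purely classical (non-quantized) dg-scheme, so only the non-equivariant Iwahori statement is expected.
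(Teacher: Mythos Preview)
This statement is labelled as a \emph{Conjecture} in the paper, and the paper does not offer a proof of it; immediately after stating it, the authors only explain heuristically why such an equivalence would be desirable (it would account for the appearance of Kostka--Shoji polynomials in two different settings). So there is no ``paper's own proof'' to compare your proposal against.

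Your submission is, as you say explicitly, a \emph{plan} rather than a proof: the key step on the constructible side --- that the functor
\[
D_\bI(\Fl)\ \otimes_{D_{\bG_\bO}(\Gr)}\ D_{!*\bG_\bO}(\Gr\times\bV)\ \otimes_{D_{\bG_\bO}(\Gr)}\ D_\bI(\Fl)^\opp\ \longrightarrow\ D_\bI(\Fl\times\bV)
\]
is an equivalence --- is asserted but not established, and you yourself flag it as the main obstacle. This relative tensor product statement is not in the literature and is not a formality; proving it would already be a substantial theorem.

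There is also an inconsistency on the coherent side. In your fiber-product formula you write $\on{St}_i$ for what must be the Grothendieck--Springer resolution $\widetilde{\fgl_{V_i}}$ (one flag in $V_i$), since $\on{St}_\mir$ carries only one flag in each $V_i$. But the equivalence you invoke from~\cite{b} matches $D_\bI(\Fl)$ with coherent sheaves on the genuine Steinberg variety (two flags), not on $\widetilde{\fgl_N}$. If $\on{St}_i$ really denotes the two-flag Steinberg variety, your fiber product has four flag factors and cannot be $\on{St}_\mir$; if it denotes $\widetilde{\fgl_{V_i}}$, then you need the Arkhipov--Bezrukavnikov description of $D_\bI(\Gr)$ (or of $D_{\bG_\bO}(\Fl)$) rather than~\cite{b}, and the constructible tensor product must be rewritten accordingly (with factors like $D_\bI(\Gr)$ and $D_{\bG_\bO}(\Fl)$ in place of the two copies of $D_\bI(\Fl)$). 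Until this bookkeeping is straightened out and the module-category equivalence is actually proved, the proposal remains a heuristic outline, consistent with the fact that the statement is left open in the paper.
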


This conjecture would explain, in particular,
the appearance  of the same  polynomials, called the Kostka-Shoji polynomials,
in two different problems.
On the one hand, it was proved in~\cite{fgt} that these  polynomials are equal to the
Poincar\'e polynomials of the stalks of $\bG_\bO$-equivariant IC sheaves on
the mirabolic affine Grassmannian.
On the other hand, it was proved in~\cite{fi} that
the Kostka-Shoji polynomials are equal to the
Poincar\'e polynomials of graded multiplicities of the natural
$\GL_N\times\GL_N$-action on the space of global sections of line
bundles on a convolution diagram of the cyclic $\widetilde{A}_1$-quiver.

\subsection{Satake equivalence for some Lie supergroups}
\label{degen}
We consider the Lie superalgebra $\fgl(M|N)$ of endomorphisms of a super vector space
$\BC^{M|N}$, and the corresponding Lie supergroup $\GL(M|N)=\on{Aut}(\BC^{M|N})$.
We also consider a degenerate version $\ul\fgl(M|N)$ where the
supercommutator of the even elements (with even or odd elements) is the same as in $\fgl(M|N)$,
while the supercommutator of any two odd elements is set to be zero.
In other words, the even part $\ul\fgl(M|N){}_{\bar0}=\fgl_M\oplus\fgl_N$ acts naturally on the
odd part $\ul\fgl(M|N){}_{\bar1}=\Hom(\BC^M,\BC^N)\oplus\Hom(\BC^N,\BC^M)$, but the
supercommutator $\ul\fgl(M|N){}_{\bar1}\times\ul\fgl(M|N){}_{\bar1}\to\ul\fgl(M|N){}_{\bar0}$
equals zero.

The category of
finite dimensional representations of the corresponding supergroup $\ul\GL(M|N)$ (in vector
superspaces) is denoted $\Rep(\ul\GL(M|N))$, and its bounded derived category is denoted
$SD(\ul\GL(M|N))$.

There is a Koszul equivalence $\varkappa\colon SD(\ul\GL(N|N))\iso
SD_\perf^{\GL_N\times\GL_N}(\fS^\bullet_{1,1})$ (equivariant perfect dg-{\em super}modules over
dg-{\em super}algebra $\fS^\bullet_{1,1}$), see e.g.~\cite{mr}.
It intertwines the usual tensor product of
$\ul\GL(N|N)$-modules with the tensor product $\otimes_{\fS^\bullet_{1,1}}$ on
$SD_\perf^{\GL_N\times\GL_N}(\fS^\bullet_{1,1})$. Composing the Koszul equivalence $\varkappa$ with the
mirabolic Satake equivalence $\Phi^{1,1}\colon SD_\perf^{\GL_N\times\GL_N}(\fS^\bullet_{1,1})\iso
SD_{!*\bG_\bO}(\Gr\times\bV)$ (constructible sheaves of {\em super}vector spaces) we obtain
a super Satake equivalence $SD(\ul\GL(N|N))\iso SD_{!*\bG_\bO}(\Gr\times\bV)$ that intertwines
the usual tensor product of $\ul\GL(N|N)$-modules with the fusion product on
$SD_{!*\bG_\bO}(\Gr\times\bV)$. Moreover, this derived equivalence is exact 
with respect to the tautological $t$-structure on
$SD(\ul\GL(N|N))$ with the heart $\Rep(\ul\GL(N|N))$, resp.  the perverse $t$-structure on
$SD_{!*\bG_\bO}(\Gr\times\bV)$ with the heart $S\Perv_{\bG_\bO}(\Gr\times\bV)$.



\bigskip

Similarly, we construct equivalences
\[SD(\ul\GL(N-1|N))\xrightarrow[\ol\varkappa]{\sim}
SD_\perf^{\GL_{N-1}\times\GL_N}(\ol\fS{}^\bullet_{1,1})
\xrightarrow[\ol\Phi]{\sim}SD_{\GL(N-1,\bO)}(\Gr_{\GL_{N}}),\]
where $\ol\fS{}^\bullet_{1,1}=\Sym\!{}^\bullet\big(\Hom(\BC^{N-1},\BC^N)[-1]\oplus
\Hom(\BC^N,\BC^{N-1})[-1]\big)$.\footnote{Here we view
  both $\Hom(\BC^{N-1},\BC^N)$ and $\Hom(\BC^N,\BC^{N-1})$ as {\em odd} vector spaces, so that
  $\Sym\!{}^\bullet\big(\Hom(\BC^{N-1},\BC^N)[-1]\oplus\Hom(\BC^N,\BC^{N-1})[-1]\big)$
  (with grading disregarded) is really a symmetric (infinite-dimensional)
  algebra, not an exterior algebra.}
The composition is again $t$-exact with respect to the tautological $t$-structure on
$SD(\ul\GL(N-1|N))$ with the heart $\Rep(\ul\GL(N-1|N))$, resp. the perverse $t$-structure on
$SD_{\GL(N-1,\bO)}(\Gr_{\GL_{N}})$  with the heart $S\Perv_{\GL(N-1,\bO)}(\Gr_{\GL_{N}})$.
Moreover, the composition intertwines the usual tensor product of
$\ul\GL(N-1|N)$-modules with the fusion product on $S\Perv_{\GL(N-1,\bO)}(\Gr_{\GL_{N}})$.
Similarly to Section~\ref{qua}, the functor $\ol\Phi$ can be extended to an equivalence
$\ol\Phi{}_\hbar\colon SD_\perf^{\GL_{N-1}\times\GL_N}(\ol\fD{}^\bullet)\iso
SD_{\GL(N-1,\bO)\rtimes\BC^\times}(\Gr_{\GL_{N}})$, where $\ol\fD{}^\bullet$ is the graded Weyl algebra
of shifted differential operators on $\Hom(\BC^{N-1},\BC^N)$ (with $\deg\hbar=2$ and  all the
other generators in degree 1).

\subsection{Gaiotto conjectures}
\label{gaicon}
One may wonder if there is a geometric realization of categories of representations of
{\em nondegenerate} supergroups $\GL(N|N),\ \GL(N-1|N)$. It turns out that such a realization
exists (conjecturally) for the categories of integrable representations of
{\em quantized} algebras $U_q(\fgl(N|N)),\ U_q(\fgl(N-1|N))$. First of all, similarly to the
classical Kazhdan-Lusztig equivalence, it is expected that
$U_q(\fgl(M|N))\on{-mod}\cong\on{KL}_c(\widehat\fgl(M|N))$, where $q=\exp(\pi\sqrt{-1}/c)$,
and $\on{KL}_c(\widehat\fgl(M|N))$ stands for the derived category of
$\GL(M,\bO)\times\GL(N,\bO)$-equivariant
$\widehat\fgl(M|N)$-modules at the level corresponding to the invariant bilinear form
$(X,Y)=c\cdot\on{sTr}(XY)-\frac12\Killing_{\fgl(M|N)}(X,Y)$ on $\fgl(M|N)$.
Second, it is expected that
the category $\on{KL}_c(\widehat\fgl(N-1|N))$ is equivalent to the
$q$-monodromic
$\GL(N-1,\bO)$-equivariant derived constructible category of the complement
$\CL^\bullet$ of the zero section of the determinant line
bundle on $\Gr_{\GL_N}$, and this equivalence takes the standard $t$-structure of
$\on{KL}_c(\widehat\fgl(N-1|N))$ to the perverse $t$-structure.

Further, it is expected that $\on{KL}_c(\widehat\fgl(N|N))$ is equivalent to the 
$q$-monodromic $\GL(N,\bO)$-equivariant derived constructible category of $\CL^\bullet\times\bV$,
and this equivalence takes the standard $t$-structure of $\on{KL}_c(\widehat\fgl(N|N))$ to the
perverse $t$-structure. For $M<N-1$ it is expected that $\on{KL}_c(\widehat\fgl(M|N))$ is
equivalent to the $q$-monodromic $\GL(M,\bO)$-equivariant derived constructible category of
$\CL^\bullet$ with certain Whittaker conditions, cf.~Section~\ref{simple-minded} for more details.
In particular,  the special case $M=0$ of this conjecture follows from the Fundamental Local
Equivalence~\cite{Gait-lur,Gait-quantum,Gait-whit} of the geometric Langlands program.

There are similar expectations for other classical (i.e.\ orthosymplectic) Lie superalgebras; the
interested reader may try to find them in~\cite{gw}.

\subsection{Conjectures of Ben-Zvi, Sakellaridis and Venkatesh}
In an ongoing project of D.~Ben-Zvi, Y.~Sakellaridis and
A.~Venkatesh, the authors propose the Periods---$L$-functions duality conjectures. Their
conjectures predict, among other things, that given a reductive group $\on{G}$ and its spherical
homogeneous variety $X=\on{G}/\on{H}$, there is a subgroup $\on{G}^\vee_X\subset\on{G}^\vee$, its
graded representation $V^\vee_X=\bigoplus_{i\in\BZ}V^\vee_{X,i}[i]$, and an equivalence
$D\Coh(V^\vee_X/\on{G}^\vee_X)=D\Coh\big((\bigoplus_{i\in\BZ}V^\vee_{X,i}[i])/\on{G}^\vee_X\big)
\simeq D_{\on{G}(\bO)}(X(\bF))$.
For a partial list of examples, see the table at the end of~\cite{sa}. The relevant
representations $V^\vee_X$ (constructed in terms of the Luna diagram of $X$) can be
read off from the 4-th column of the table.

It turns out that the equivalences discussed in~Section~\ref{degen} fit into the general
setting outlined in the previous paragraph. Thus the case of Example~13 of~\cite{sa}
corresponds to the equivalence
$\ol{\Phi}\colon D_\perf^{\GL_{N-1}\times\GL_N}(\ol\fS{}^\bullet_{1,1})\iso D_{\GL(N-1,\bO)}(\Gr_{\GL_{N}})$
of~Section~\ref{degen}. To explain this, let $\on{G}:=\GL_{N-1}\times\GL_N$ and
$\on{H}:=\GL_{N-1}$. We view $\on{H}$ as a block-diagonal subgroup of $\on{G}$
and put $X=\on{G}/\on{H}$.
Then loosely speaking we have $D_{\GL(N-1,\bO)}(\Gr_{\GL_{N}})\simeq
D\big(\GL(N-1,\bO)\backslash\GL(N,\bF)/\GL(N,\bO)\big)\simeq
D\big(\on{G}(\bO)\backslash\on{G}(\bF)/\on{H}(\bF)\big)\simeq
D\big(\on{G}(\bO)\backslash X(\bF)\big)\simeq D_{\on{G}(\bO)}(X(\bF))$.
On the other hand, we consider a graded $\on{G}^\vee$-module
$V^\vee_X:=\Hom(\BC^{N-1},\BC^N)[1]\oplus\Hom(\BC^N,\BC^{N-1})[1]$
(similarly to the footnote in~Section~\ref{degen}, we view $V^\vee_X$ as an
  {\em odd} vector space placed in cohomological degree $-1$. Note also that
  $\on{G}^\vee\simeq\on{G}=\GL_{N-1}\times\GL_N$).
Hence, the equivalence $\ol{\Phi}$ of~Section~\ref{degen} takes the form
$D\Coh(V^\vee_X/\on{G}^\vee)\simeq D_{\on{G}(\bO)}(X(\bF))$.

Similarly, Example~14 of~\cite{sa}
matches the Gaiotto conjecture of~Section~\ref{gaicon} for an orthosymplectic Lie superalgebra.

\subsection{Organization of the paper}
In Section~\ref{gaiotto conjectures} (which is not necessary for understanding subsequent sections)
we formulate the Gaiotto conjectures and explain their relation with the geometric Langlands
program; in particular, with the Fundamental Local Equivalence.
Section~\ref{coherent realization} is the technical core of the paper. In this section we
establish a coherent description of the spherical mirabolic affine Hecke category
$D_{\GL(N,\bO)}(\Gr\times\bV)$ with its three monoidal structures. Also, as a preparation
for~Section~\ref{coh realization N-1} we give a coherent description of the restriction
functor $D_{\GL(N,\bO)}(\Gr\times\bV)\to D_{\GL(N,\bO)}(\Gr\times\bV_0)$.\footnote{Recall that
  $\bV_0=V[\![t]\!]$ is the standard lattice in the Tate vector space $\bV$.}
In~Section~\ref{coh realization N-1} we establish a coherent description of the category
$D_{\GL(N-1,\bO)}(\Gr)$ along with its fusion monoidal structure.
In~Section~\ref{loop rotation} we prove the quantum analogues,
that is, counterparts  for 
categories equivariant with respect to the loop rotations, of the  results above.

\subsection{Acknowledgments}
We are grateful to R.~Bezrukavnikov, D.~Ben-Zvi, I.~Entova-Aizenbud, P.~Etingof, B.~Feigin,
D.~Gaiotto, D.~Gaitsgory, J.~Hilburn, D.~Leites, C.~Mautner,
Y.~Sakellaridis, V.~Serganova, A.~Venkatesh
and P.~Yoo for very useful discussions. A.B.\ was partially supported by NSERC.
M.F.\ was partially funded within the framework of the HSE
University Basic Research Program and the Russian Academic Excellence Project `5-100'.
The work of V.G.\ was supported in part by an NSF award DMS-1602111.

\section{Gaiotto conjectures}
\label{gaiotto conjectures}
The purpose of the present Section is to put the main results of this paper into a wider framework which has to do with
the local geometric Langlands correspondence. This section will be used for motivation purposes only.
It can be safely skipped by  the readers who are not interested in the geometric Langlands correspondence.
The ideas of this Section are due to D.~Gaiotto (private communication).
Gaiotto  informed us that his ideas
were motivated, to a large extent, by his discussions with P.~Yoo as well as by \cite{MW}.

\subsection{Reminder on strong actions on categories}
Let $G$ be a connected reductive group over $\CC$ and let $\kap$ denote an invariant symmetric bilinear form on the Lie algebra $\grg$ of $G$ (when $G$ is simple, the vector space of such  bilinear 
forms is 1-dimensional, so we can think of $\kap$ as an element of $\CC$). Then there is a notion of {\em strong action of the group $G_\bF$ on a category $\calC$ of level $\kap$}. We refer the reader
to~\cite{Gait-quantum} for details of the definition. It will be important for us later
 that  this definition is, in some sense,  invariant under integral shifts.
Specifically,
any category $\calC$ 
 endowed with a $G_\bF$-action of level $\kap$
has  a natural $G_\bF$-action  of level $\kap+\kap'$,
where
  $\kap'$ is another form as above which is, moreover, integral 
in the sense that the corresponding quadratic form is integral and even on elements of the coweight lattice of $G$.
Here are two  important examples:

\medskip
\noindent
1) Let $\hat{\grg}_{\kap}$ denote the central extension of $\grg_\bF$ associated with the form $\kap$. 
Let  $\hat{\grg}_{\kap}$-mod be the category of continuous (with respect to $t$-adic topology) modules over 
$\hat{\grg}_{\kap}$ such that the element 1 of the center acts on the module as the identity.
Then, the adjoint action of $G_\bF$ on $\hat{\grg}_{\kap}$ has a natural lift
 to a strong action of $G_\bF$ on $\hat{\grg}_{\kap}$-mod of level~$\kap$. 
More generally, let $\gra=\gra_{\bar0}\oplus \gra_{\bar1}$ be a Lie superalgebra. Assume that:

\begin{i-ii-iii}
\item\label{gai} $G$ acts on $\gra$;

\item\label{gaii} We are given a map $\iota\colon \grg\to \gra_{\bar0}$ such that the corresponding adjoint action of $\grg$ on $\gra$ is equal to the derivative of the $G$-action from (\ref{gai}).

\item\label{gaiii} The algebra $\gra$ is equipped with an invariant symmetric (in the super-sense) bilinear form $\kap_{\gra}$. Let  $\kap_{\grg}$ be its pull-back to $\grg$.
\end{i-ii-iii}

Associated with the form $\kap_{\gra}$, there is a canonical Kac-Moody extension
$\hat{\gra}$ of $\gra_\bF$.
As before, we denote by $\hat{\gra}$-mod the category of continuous modules $M$ over $\hat{\gra}$ 
such that the element 1 of the center acts on $M$ by the identity. This category comes
equipped with a $G_\bF$-action  of level $\kap_{\grg}$.

We will mostly be interested in the following special case of the above construction. 
Fix a pair $M,N$, of non-negative integers.
Let $\gra=\gl(M|N)$, $G=\GL_M\times \GL_N$. Let $\kap_{\gra}(x,y)=c\cdot \sTr(xy)$,
where $\sTr$  stands for `super-trace' and $c\in \CC$.

\medskip
\noindent
2) Assume that the form $\kap$ is integral and even. Then,
this form gives rise to a central extension $\hatG$ of $G_\bF$. Let $X$ be an ind-scheme equipped 
with a $\hatG$-equivariant line bundle $\calL$. Then for any $c\in \CC$ one has the category 
$D_c(X)$-mod of $c$-twisted $D$-modules on $X$. This category has a natural strong $G_\bF$-action 
 of level $c\cdot \kap$.

\subsection{Digression on the local (quantum) geometric Langlands correspondence}
\label{digression}
Let  $G$ and  $\kap$ be as above
and assume in addition that the form $\kap$  is non-degenerate.
Let $G^{\vee}$ denote the Langlands dual group. Since $\kap$ is non-degenerate it gives a similar form 
$\kap^{\vee}$ for $G^{\vee}$.
Further, put  $\kap_{\on{crit}}=-\frac12{\Killing}_\fg$, where ${\Killing}_\fg$ stands for the Killing form
on the Lie algebra $\fg$ of $G$.

The local quantum geometric Langlands conjecture is, roughly speaking, an equivalence of 2-categories
$$
\{ \text{Categories with strong $G_\bF$-action of level $\kap+\kap_{\on{crit}}$}\}$$
and
$$\{ \text{Categories with strong $G^{\vee}_\bF$-action of level $-\kap^{\vee}-\kap^{\vee}_{\on{crit}}$}\}.
$$
The form $\kap_{\on{crit}}$ being integral, the shift by $\kap_{\on{crit}}$ in the above formulation
 is not very essential. However, it is convenient  for many applications to make this shift.

To give a  rigorous meaning to the above conjecture one needs, first of all, to replace all `categories'
by  suitable `dg-categories'. This upgrades each side of the equivalence to an $(\infty,2)$-category. 
Then for generic, i.e. non-rational, $\kap$ the above equivalence is expected to hold as stated. 
For general $\kap$, more corrections are necessary but 
we will not discuss 
this  here since the local geometric Langlands correpondence will only 
serve as a guiding principle.

There is a ``limiting version" of the above conjecture  for $\kap=0$. To explain this,
we use the notion of a `category over a stack $\calS$', 
cf.~\cite[\S6]{Gait-quantum} and references therein. Write $\calD^\circ=\Spec(\bF)$ and 
 let $\LS_{G^{\vee}}(\calD^\circ)$
be the classifying stack of principal $G^{\vee}$-bundles  on $\calD^\circ$ equipped with a connection.
The  ``classical'' local geometric Langlands conjecture  predicts a close relationship between
$$
\{ \text{Categories with strong $G_\bF$-action of level $\kap_{\on{crit}}$}\}
$$
and
$$
\{\text{Categories over $\LS_{G^{\vee}}(\calD^\circ)$}\}.
$$
Here,
$\kap_{\on{crit}}$ can be replaced by $0$,
since $\kap_{\on{crit}}$ is integral.
Again, it is possible to make
 the informal relation above a rigorous mathematical conjecture,
cf.\ again~\cite[\S6]{Gait-quantum} for a more detailed discussion.

In the remaining part of this Section we will pretend that both quantum and classical geometric Langlands conjectures hold as stated. We will write  $\calC\mapsto \calC^{\vee}$ for 
the resulting correspondence .

An important example of such a correspondence is as follows. Fix a non-degenerate form $\kap$
and let  $\calC=D_{\kap+\kap_{\on{crit}}}(\Gr_G)$-mod be the category of
$(\kap-\kap_{\on{crit}})$-twisted $D$-modules on the affine Grassmannian $\Gr_G$ of $G$.
It is expected that in this case the category $\calC^{\vee}$ is 
 the category $D_{-\kap^{\vee}-\kap^{\vee}_{\on{crit}}}(\Gr_{G^{\vee}})$-mod.
In the limiting case $\kap=0$ category $\calC^{\vee}$ is expected to be a
 push-forward of  category $\QCoh(\pt/G^{\vee})$ under the natural map $\pt/G^{\vee}\to \LS_{G^{\vee}}(\calD^\circ)$ 
induced by the imbedding of  the trivial local system. These expectations would imply the following:
\begin{i-ii-iii}
\item If $\kap$ is non-degenerate, then $\calC^{G_\bO}\simeq (\calC^{\vee})^{G^{\vee}_\bO}$ (here $\calC^{G_\bO}$ denotes the category of $G_\bO$-equivariant objects in $\calC$).
\item If $\kap=0$ then $\calC^{G_\bO}$ is equivalent to the pull-back of the category 
$\calC^{\vee}$ under the map $\pt/G^{\vee}\to \LS_{G^{\vee}}(\calD^\circ)$ corresponding to the trivial local system.
\end{i-ii-iii}

\subsection{Whittaker category and the fundamental local equivalence (FLE)} In this
subsection we  discuss another  important example of Langlands dual
categories. We refer to \cite{Gait-quantum} and \cite{Gait-whit} for details.

Let $U$ be a maximal unipotent subgroup of $G$ and let $\chi_0\colon U\to \GG_a$ be a non-degenerate character. We 
define a character
$\chi\colon U_\bF\to \GG_a$ by 
$$
\chi(u(t))=\Res_{t=0} \chi_0(u(t))dt.
$$
Given   a (co-complete, dg)  category  $\calC$ with a strong $G_\bF$-action of a fixed level
one can define a  category $\Whit(\calC)$ of
$(U_\bF,\chi)$-equivariant objects in~$\calC$.

We consider  the category $D_{\kap}(G_\bF)$-mod of $\kap$-twisted $D$-modules on $G_\bF$. 
According to \cite{AG} this category has a natural action of $G_\bF$ of level $\kap$ that comes from left translations, and
also another action of $G_\bF$ of level $-\kap+2\kap_{\on{crit}}$ that comes from right translations.

Let $\Whit^r_{\kap}(G_\bF)$ denote its Whittaker category with respect to the right action. This category
inherits the left action of level $\kap$. One could ask what is its Langlands dual category.

For simplicity, below we will only consider either the case where $\kap$ is not rational
(i.e.\ the value of the corresponding quadratic form on any coroot is not a rational number) or the case $\kap=0$.

Assume first that $\kap$ is non-degenerate and not rational. Then it is expected (cf.~\cite{Gait-quantum}) that
\begin{equation}\label{fle-kap}
  \text{$\Whit^r_{\kap-\kap_{\on{crit}}}(G_\bF)^{\vee}$ is the category
    $\widehat{\grg^{\vee}}\!\!\!_{-\kap^{\vee}+\kap^{\vee}_{\on{crit}}}$-mod.}
\end{equation}
In the case $\kap=0$ the expected answer is the category $\QCoh(\LS_{G^{\vee}}(\calD^\circ))$.

 Let $\Whit_{\kap}(\Gr_G)$ denote the category of $\kap$-twisted $D$-modules on $\Gr_G$ and let $\KL_{\kap}(\grg)=(\hat{\grg}_{\kap}\text{-mod})^{G_\bO}$.
Then, statements (i) and (ii) of Section~\ref{digression} imply
 the following result, which has been  proved rigorously in \cite{Gait-whit} and \cite{Gait-lur} (it goes under the name ``fundamental local equivalence"):
\begin{thm}
Assume that $\kap$ is non-degenerate and not rational.
\begin{equation}\label{whit-kap}
\Whit_{\kap+\kap_{\on{crit}}}(\Gr_G)\simeq \KL_{\kap^{\vee}+\kap^{\vee}_{\on{crit}}}(\grg^{\vee}).
\end{equation}
\begin{equation}\label{whit-cl}
\Whit(\Gr_G)\simeq \Rep(G^{\vee}).
\end{equation}
Moreover, these equivalences hold on the level of abelian categories.
\end{thm}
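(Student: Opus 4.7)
The plan is to deduce both equivalences from the framework of the local quantum geometric Langlands correspondence, by realizing each side as invariants of a single pair of Langlands-dual $G_\bF$-module categories and invoking the matching principles (i)--(ii) of Section~\ref{digression}. I would first handle the classical statement (\ref{whit-cl}) as the degenerate case, and then bootstrap to the quantum statement (\ref{whit-kap}) via factorization/fusion techniques.

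First, for (\ref{whit-cl}), I would stratify $\Gr_G$ by the $U_\bF$-orbits, which via Birkhoff factorization are indexed by integral coweights of $G$, and analyze which strata can support a $(U_\bF,\chi)$-equivariant D-module. Non-degeneracy of $\chi$ forces the admissible strata to be precisely those indexed by dominant coweights $\lambda\in\Lambda^+$, each carrying a unique (up to scalar) clean Whittaker extension $W^\lambda$. This yields a bijection of isomorphism classes with the irreducibles $V^\lambda\in\Rep(G^\vee)$. To match the tensor structures, I would upgrade the picture to the Ran space and use the fusion structure on the Beilinson--Drinfeld Grassmannian, reducing compatibility to the ungraded geometric Satake equivalence.

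Second, for (\ref{whit-kap}), I would exhibit both sides as factorization module categories over appropriate factorization algebras: on the Whittaker side from fusion on the twisted BD Grassmannian; on the $\KL$ side from the chiral factorization algebra associated with the vacuum module of $\widehat{\grg^\vee}$ at level $\kap^\vee+\kap^\vee_{\crit}$. The key construction is a quantum Jacquet functor
\[
\KL_{\kap^\vee+\kap^\vee_{\crit}}(\grg^\vee)\longrightarrow\Whit_{\kap+\kap_{\crit}}(\Gr_G),
\]
obtained by twisted averaging (the Kac--Moody analogue of the Arkhipov--Bezrukavnikov--Gaitsgory functor). I would verify that this functor is compatible with the factorization structures, $t$-exact for the natural hearts, and sends Weyl modules to the Whittaker standards $W^\lambda_\kap$ on each stratum.

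The main obstacle is showing that this quantum Jacquet functor is an equivalence, not merely a fully faithful embedding. This requires delicate $\Ext$-vanishing computations on both sides, each of which reduces to a semi-infinite/BRST cohomology calculation for the relevant factorization algebras. The non-rationality hypothesis on $\kap$ is essential here: it guarantees that all Weyl modules over $\widehat{\grg^\vee}_{\kap^\vee+\kap^\vee_{\crit}}$ are irreducible (no resonance between highest weights), and dually that the Whittaker standards coincide with their IC-extensions, so that the equivalence can be tested on a set of compact generators. Exactness on the abelian hearts then follows automatically from the matching of simple objects on each side, completing the proof uniformly with (\ref{whit-cl}).
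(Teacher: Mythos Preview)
The paper does not prove this theorem; it is quoted as the \emph{fundamental local equivalence}, with the rigorous proof attributed to \cite{Gait-whit} and \cite{Gait-lur}. The only argument the paper offers is the heuristic one: both sides arise by applying principles (i)--(ii) of Section~\ref{digression} to the expected duality~(\ref{fle-kap}) between $\Whit^r_{\kap-\kap_{\on{crit}}}(G_\bF)$ and $\widehat{\grg^\vee}\!\!\!_{-\kap^\vee+\kap^\vee_{\on{crit}}}$-mod. The paper is explicit that this is not a proof, since (i)--(ii) rest on the local Langlands correspondence, which is treated there only as a guiding principle.

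Your opening paragraph reproduces exactly this heuristic, so to that extent you match the paper. The remainder of your proposal---the stratification by $U_\bF$-orbits, the construction of a quantum Jacquet functor, the $\Ext$-vanishing via semi-infinite cohomology, the use of non-rationality to force irreducibility of Weyl modules---goes well beyond anything in the present paper and is closer in outline to the actual arguments in the cited references. Whether those details are correct cannot be adjudicated against this paper; you would have to compare directly with \cite{Gait-lur} and \cite{Gait-whit}. One caution: your stated plan to ``deduce both equivalences \ldots\ by invoking the matching principles (i)--(ii)'' is circular as written, since (i)--(ii) are conjectural here; the independent constructions you sketch afterward are what would actually carry the argument.
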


\begin{rems}
1) The critical shifts in~(\ref{whit-kap}) are not important for irrational $\kap$ if  one only cares
 about both sides as abstract categories. However, we still prefer to keep them, since in this way one can also extend the statement to rational $\kap$; in addition the shifts are important if we keep track of some natural structures on these categories (cf.\ 4) below.

2) Note the absence of a negative sign before $\kap^{\vee}+\kap^{\vee}_{\on{crit}}$ in the RHS of (\ref{whit-kap}). This has to do with the fact that $\Whit_{\kap+\kap_{\on{crit}}}(\Gr_G)$ is actually the category of $G_\bO$-equivariant objects with respect to the {\em left} action of $G_\bO$ on the category of $D$-modules on $G_\bF$ which are Whittaker {\em on the right}. This change of right to left is what is responsible for the change of sign.

3) The fact that the above equivalences respect the natural t-structures
does not follow (to the best of our knowledge)
from any geometric Langlands considerations. In fact, at the level of (unbounded) derived categories the statement holds for all $\kap$, but when $\kap$ is positive rational it is very far from an abelian equivalence.

4) The above equivalences are in fact not only equivalences of abstract categories, but  equivalences of
categories with {\em factorization structure}, a notion   
closely related to the notion of a braided monoidal category (it is also worthwhile to note that as an abstract category, $\KL_{\kap}(\hat{\grg})$ is independent of $\kap$ if $\kap$ is irrational; but it is not so if we take into account the factorization structure).
\end{rems}
\subsection{Gaiotto conjectures: geometric Langlands form in the case $N>M$}
There is one more series of examples which are relevant for the subject of this paper.
Fix two non-negative integers $N$ and $M$ such that $N\geq M$ and set $G_{M,N}=\GL_M\times \GL_N$. Note that $G_{M,N}$ is isomorphic to $G_{M,N}^{\vee}$.

We are going to produce an example of Langlands dual categories for $G_{M,N}$; for $M=0$ we will recover (\ref{fle-kap}).
First, we describe the analog of the RHS. Let $c\in \CC$. Then the category in question will be the category
$\widehat{\gl}(M|N)_c$-mod of modules over the affine Lie super-algebra $\widehat{\gl}(M|N)$ of level $c\cdot\kap_{M,N}+\frac12{\Killing}_{M,N}$, where

1) $\kap_{M,N}(x,y)=\sTr(x y)$.

2) ${\Killing}_{M,N}$ is the restriction of the Killing form of $\gl(M|N)$ to the even part (note that it is degenerate if $M=N$).

This category has a natural action of the group $G_{M,N}(\bF)$ of certain level which is an
integral shift of $(c\cdot\kap_M,-c\cdot\kap_N)$ (here $\kap_N$ denotes the standard
  invariant bilinear form on the Lie algebra $\gl_N$ equal to $\Tr(X\cdot Y)$).
As has been explained above, one can twist the action of $G_{M,N}(\bF)$ on this category so that
the twist becomes equal to $(c\cdot\kap_M,-c\cdot\kap_N)-\kap_{\crit}$ (here by $\kap_{\crit}$ we
mean the critical bilinear form for the Lie algebra $\mathfrak{g}_{M,N}$).
Hence it makes sense to consider its Langlands dual. This should be a category with a strong
action of $G_{M,N}(\bF)$ of level
$c^{-1}\cdot(\kap_M,-\kap_N)-\kap_{\crit}$. However, we can again twist the action and think of it as a
category with an action of $G_{M,N}(\bF)$ of level
$c^{-1}\cdot(\kap_M,-\kap_N)$.
Let us give a conjectural description of the Langlands dual category according to a prediction
of D.~Gaiotto.

Next, let $M<N$. We define a unipotent subgroup $U_{M,N}$ of $\GL_N$ as follows. If $M=N-1$ this subgroup is trivial, 
and if $M=0$ it is the group $U_N$ of unipotent upper-triangular matrices. 
In the general case,  $U_{M,N}$ is a subgroup of $U_N$ defined as follows.

Let $e_{M,N}\in \gl_N$ be the standard upper-triangular Jordan block of size $N-M$, i.e.
\[e_{M,N}=\sum\limits_{i=1}^{N-M-1} E_{i,i+1},\]
where $E_{ij}$ stands for the matrix whose $(i,j)$-entry is equal to 1 and all other entries are equal to 0.
The element $e_{M,N}$ is part of an $\mathfrak{sl}_2$-triple $(e_{M,N},h_{M,N},f_{M,N})$.
Here $f_{M,N}=\sum\limits_{i=1}^{N-M-1} i(N-M-i) E_{i+1,i}$ and $h_{M,N}$ is the diagonal matrix which has
diagonal entries
$(N-M-1,N-M-3,\ldots,-N+M+1,0,\ldots,0)$. For any integer $l$ we let $\mathfrak{g}_l$ denote the
$l$-eigen-space of the adjoint action of $h_{M,N}$ on $\gl_N$. We  set
\[\mathfrak{u}_{M,N}=\bigoplus\limits_{l\geq 2} \mathfrak{g}_l\oplus \mathfrak{g}_1^+,\]
where $\mathfrak{g}_1^+$ is the intersection of $\mathfrak{g}_1$ with the Lie algebra of upper-triangular matrices.
We define a Lie algebra  homomorphism
$\chi^0_{M,N}\colon \mathfrak{u}_{M,N}\to \mathbb{C}$ by sending a matrix
$(u_{ij})$ to $\sum\limits_{i=1}^{N-M-1} u_{i,i+1}$. 
Let  $U_{M,N}$  be  a unipotent subgroup of $\GL_N$ with Lie algebra $\mathfrak{u}_{M,N}$
and let $U_{M,N}\to \mathbb{G}_a$ be the homomorphism induced by $\chi_{M,N}^0$
(which we denote by the same
symbol $\chi_{M,N}^0$).
  
We imbed the group $\GL_M$ into the centralizer of the element $h_{M,N}$ in $\GL_N$ 
(this is the block-diagonal embedding corresponding to rows ${N-M+1},\ldots,N$). It is easy to
see that the group $\GL_M$ normalizes the subgroup $U_{M,N}$ and the homomorphism $\chi^0_{M,N}$
is fixed by the $\GL_M$-action on  $U_{M,N}$ by conjugation.

\begin{rem}
  \label{Roma's approach}
  $U_{M,N}$ is conjugate to the subgroup $U'_{M,N}$ formed by the block-upper-triangular matrices
  $\begin{pmatrix}U_r& {*} &{*}\\0&1_{M+1}& {*}\\0&0& U_s\end{pmatrix}$ where
    \[r=\ceil{(N-M-1)/2},\ s=\floor{(N-M-1)/2};\] in particular $r+s=N-M-1$.
    Here $U_p$ stands for an arbitrary unipotent upper-triangular matrix in $\GL_p$,
    and  the notation `$*$' is used for  arbitrary matrices of an appropriate size.
    Moreover, the conjugation can be chosen so that the character $\chi_{M,N}^0$ corresponds
    to the character $\chi_{M,N}^{(r,s)}$ on $\mathfrak u_{M,N}^{(r,s)}:=\Lie U_{M,N}^{(r,s)}$ given by
    $(u_{ij})\mapsto\sum_{i=1}^{r-1} u_{i,i+1} + u_{rk} + u_{k,N-s+1} + \sum_{i=N-s+1}^{N-1} u_{i,i+1}$
    for any choice of $k\in\{r+1,\ldots,N-s\}$.
    The subgroup $U_{M,N}^{(r,s)}\subset\GL_N$ and the character $\chi_{M,N}^{(r,s)}$ are defined for arbitrary pair $(r,s)\in\BN^2:=\BZ_{\geq0}^2$ with $r+s=N-M-1$. (In the two extreme cases $\{r,s\}=\{0,N-M-1\}$, one of the middle terms in the formula for $\chi_{M,N}^{(r,s)}$ is undefined and should be omitted.) Moreover $\chi_{M,N}^{(r,s)}$ can be replaced by an arbitrary representative of the open $N_{\GL_N}(U_{M,N}^{(r,s)})$-orbit in $\mathfrak u_{M,N}^{(r,s)*}$. 
\end{rem}

\begin{rem}
  \label{classical approach}
  $U_{M,N}^{(0,N-M-1)}$ is also conjugate to the unipotent radical $U_{(M+1,1,\ldots,1)}$ of the standard parabolic
  subgroup $P_{(M+1,1,\ldots,1)}$ of $\GL_N$ corresponding to the partition
  $(M+1,1,\ldots,1)$ of $N$. The character
  $\chi^0_{M,N}$ is conjugate to the restriction of the regular character
  $u\mapsto\sum_{i=1}^{N-1}u_{i,i+1}$ of the upper triangular subgroup to $U_{(M+1,1,\ldots,1)}$,
  cf.~\cite[\S(2.11)]{jps} and~\cite[the beginning of~\S2 of Lecture 5]{c}.
\end{rem}

As before, we  define  a homomorphism $\chi_{M,N}\colon U_{M,N}(\bF)\to \mathbb{G}_a$ 
to be 
$\Res_{t=0}\chi^0_{M,N}$.

For any $c'\in \CC$ we now consider the category $D_{c'}(\GL(N,\bF))^{U_{M,N}(\bF),\chi_{M,N}}$.
By definition this is the derived category of $D$-modules on $\GL(N,\bF)$ twisted by $c'\cdot\kap_N$
that are equivariant {\em on the left} with respect to $(U_{M,N}(\bF),\chi_{M,N})$. This category
has a natural action of $\GL(M,\bF)$ of level $c'\cdot\kap_M$ coming from left multiplication
and an action of $\GL(N,\bF)$ of level $-c'\cdot \kap_M-\Killing_{\fgl_N}$ coming from right
multiplication (recall that $\Killing_{\fgl_N}$ denotes the Killing form on $\gl_N$).
As before, we can twist the second action to make it an action of level $-c'\cdot\kap_M$.

\begin{rem}
  In fact, using Fourier transform for $D$-modules, one can show that replacing
  $(r,s)$ in~Remark~\ref{Roma's approach} with any pair of non-negative integers whose sum equals
  $N-M-1$ produces a category equivalent to $D_{c'}(\GL(N,\bF))^{U_{M,N}(\bF),\chi_{M,N}}$.
\end{rem}

We now take $c'=1/c$. Then according to a conjecture of D.~Gaiotto the category
$D_{1/c}(\GL(N,\bF))^{U_{M,N}(\bF),\chi_{M,N}}$ is Langlands dual to $\widehat{\gl}(M|N)_c$-mod.

We can also consider the limit $c\to\infty$. To simplify the discussion we will not do it now,
but we will discuss it later when we pass to $G_\bO$-equivariant objects.

\subsection{Gaiotto conjectures: geometric Langlands form for $N=M$}
Let us also discuss the case $N=M$. In this case on the left we again take the same category
$\widehat{\gl}(N|N)_c$-mod of modules over the affine Lie super-algebra $\widehat{\gl}(N|N)$ of
level $c\cdot\kap_{N,N}-\frac12{\Killing}_{N,N}$ (note that the Killing form is zero for
${N=M}$
). The Langlands dual category (according to Gaiotto) is the derived category
$D_{1/c}(\GL(N,\bF)\times \bV)$ of $1/c$-twisted $D$-modules on $\GL(N,\bF)\times \bV$ where:

1) The twisting is with respect to the first factor.

2) The two actions of $\GL(N,\bF)$ come from the diagonal action coming from the left action of
$\GL(N,\bF)$ on the first factor and the natural action of $\GL(N,\bF)$ on the 2nd factor, and
the right multiplication action $\GL(N,\bF)$ on the first factor.

\subsection{Gaiotto conjectures: ``simple-minded" form}
\label{simple-minded}
The above statements are not really well-formulated mathematical conjectures, since the local
geometric Langlands duality is not known at present. However, we can turn them into precise
conjectures by using~(i) at the end of subsection~\ref{digression}, i.e.\ we are going to
take $G_{M,N}(\bO)$-invariants on both sides. We get the following conjectures:
\begin{conj}\label{main-c}
Assume that $N>M$ and assume that $c\neq 0$ is not a rational number.
The categories $D_{1/c}^{(\GL(M,\bO)\ltimes U_{M,N}(\bF),\chi_{M,N})}(\Gr_{\GL_N})$  and $\KL_c(\widehat{\gl}(M|N))$
are equivalent as factorization categories.
Here $\KL_c(\widehat{\gl}(M|N))$ is the category of $G_{M,N}(\bO)$-equivariant objects in the
category $\widehat{\gl}(M|N)_c$-mod.

Similarly the category $D_{1/c}^{\GL(N,\bO)}(\Gr_{\GL_N}\times \bV)$ is equivalent to
$\KL_c(\widehat{\gl}(N|N))$.
\end{conj}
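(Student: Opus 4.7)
\smallskip
\noindent\emph{Strategy.} The plan is to reduce the conjecture to two anchor cases: the $M=0$ sub-case of the first equivalence, which is the Fundamental Local Equivalence~\eqref{whit-kap} (now known by~\cite{Gait-lur,Gait-quantum,Gait-whit}), and the classical $c\to\infty$ limits of the $M=N-1$ and $M=N$ cases, which are treated in Section~\ref{degen} via the Koszul dualities $\ol\varkappa$, $\varkappa$ composed with the mirabolic Satake equivalences $\ol\Phi$, $\Phi^{1,1}$. One then builds the general equivalences by (a) quantum deformation from the $c=\infty$ limit and (b) induction in $M$ unfolding from the two anchors.

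\smallskip
\noindent\emph{Construction of the functor.} On the Kac-Moody side one has the chiral algebra $\CA_c$ associated with $\widehat\gl(M|N)$ at the prescribed level, whose factorization modules with $\GL(M,\bO)\times\GL(N,\bO)$-invariants produce $\KL_c(\widehat\gl(M|N))$. A quantum Drinfeld--Sokolov reduction along the $\fsl_2$-triple $(e_{M,N},h_{M,N},f_{M,N})$ in the even part produces a $\CW$-algebra-like factorization algebra, and imposing the Whittaker/equivariance conditions $(\GL(M,\bO)\ltimes U_{M,N}(\bF),\chi_{M,N})$ on the D-module side should factor through the same object. This provides a natural candidate functor between the two sides compatible with the factorization structures. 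For the $M=N$ case one replaces the Drinfeld--Sokolov step by coupling to the factorization module category generated by the tautological $\bV$-factor.

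\smallskip
\noindent\emph{Verification via the classical limit.} To check the functor is an equivalence, use the $c\to\infty$ degeneration: rescaling the odd generators of $\widehat\gl(M|N)$ by $c^{-1/2}$ contracts the odd--odd bracket to zero and yields $\ul\gl(M|N)$, while simultaneously the $1/c$-twist on D-modules degenerates to the untwisted (constructible) category. At $c=\infty$ the first equivalence for $M=N-1$ becomes $\ol\varkappa\circ\ol\Phi$ and the second equivalence becomes $\varkappa\circ\Phi^{1,1}$, both established in Section~\ref{degen}. For $M=0$ the classical limit is the classical Satake~\eqref{whit-cl}. Having matched the functor at $c=\infty$, extend to all non-rational $c$ by a flatness and Ext-vanishing argument on standard and tilting generators, using the highest-weight structure indexed by the atypical blocks of $\gl(M|N)$. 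The unfolding step from $M=0$ to intermediate $0<M<N-1$ uses that enlarging $U_{M,N}$ to $U_{M+1,N}$ and enlarging $\gl(M|N)$ to $\gl(M+1|N)$ are dual operations under the conjectured equivalence.

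\smallskip
\noindent\emph{Main obstacle.} The crux is the non-formal deformation from $c=\infty$ to generic $c$. The Gaitsgory--Lurie proof of the $M=0$ FLE leverages factorization homology of the chiral Hecke algebra and deep properties of the small quantum group, and adapting this machinery to $\gl(M|N)$ requires constructing a chiral analogue of the small quantum supergroup $u_q(\gl(M|N))$ together with its factorizable braided structure. A compounding difficulty is reconciling the Koszul shift of Section~\ref{degen} (odd generators placed in cohomological degree $\pm1$ because the odd part of $\ul\gl(M|N)$ is a free commutative rather than an exterior algebra) with the natural grading on the Kac-Moody side at non-critical level; this shift is absent from the usual reductive-group story and will require genuinely new input, most likely a careful analysis of the BRST/semi-infinite cohomology functor from $\KL_c(\widehat\gl(M|N))$ to the Whittaker category that accounts for the odd directions of the superalgebra simultaneously with the nilpotent orbit of $e_{M,N}$.
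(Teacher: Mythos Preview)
The statement you are addressing is Conjecture~\ref{main-c}, and it is presented in the paper \emph{as a conjecture}, not as a theorem; the paper contains no proof of it. The paper's actual contribution is to prove the related Conjecture~\ref{main-0} (the $c\to\infty$ limit) in the cases $M=N$ and $M=N-1$ only; the quantum version at finite irrational $c$ remains open even in those cases, and for general $M<N-1$ nothing is established. So there is no ``paper's own proof'' to compare your attempt against.

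Your proposal is honest about this: it is not a proof but a strategy, and you explicitly flag the ``main obstacle'' as requiring genuinely new input (a chiral analogue of $u_q(\gl(M|N))$, control of the BRST functor for the odd directions, etc.). These are real and serious gaps. In particular, the step ``extend to all non-rational $c$ by a flatness and Ext-vanishing argument on standard and tilting generators'' is where the entire difficulty lies; this is precisely what is not known, and the existing FLE proofs do not generalize in any routine way to the super setting. Likewise, the ``induction in $M$'' step (unfolding from $M=0$ upward) presupposes that enlarging $U_{M,N}$ and enlarging $\gl(M|N)$ are ``dual operations under the conjectured equivalence,'' which is circular until the equivalence is established. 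So your document should be read as motivation and heuristics for why the conjecture is plausible, aligned with the paper's own informal discussion in Section~\ref{gaiotto conjectures}, rather than as a proof.
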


We now want to take the limit $c\to \infty$. In this case $1/c$ goes to $0$ and the category of
$1/c$-twisted $D$-modules in Conjecture~\ref{main-c} just becomes the category of usual $D$-modules.
The $c\to \infty$ limit of the category $\KL_c(\widehat{\gl}(M|N))$ is not canonically defined:
one has to choose some nice extension of the corresponding family of categories from $\AA^1$ to
$\PP^1$, cf.~\cite[\S6]{z} or~\cite[\S4]{yifei}.
Naively, one might think that the correct extension is just the category of representations of
the super-group $\GL(M|N)$. However, it turns out that this is not the right choice. Instead, one
needs to consider the category of representations of the group $\ul\GL(M|N))$ defined
in~Section~\ref{degen}.
\medskip
\noindent
With these conventions one gets the following
\begin{conj}\label{main-0}
Assume that $N>M$. Then
the category $D^{(\GL(M,\bO)\ltimes U_{M,N}(\bF),\chi_{M,N})}(\Gr_{\GL_N})$ is equivalent to the category of modules over
the group $\ul\GL(M|N))$.

Similarly, for $N=M$ the category $D^{\GL(N,\bO)}(\Gr_{\GL_N}\times \bV)$ is equivalent to the
category of modules over the group $\ul\GL(N|N))$.

These equivalences should hold for both derived and abelian categories.
\end{conj}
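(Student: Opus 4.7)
The plan is to establish the conjecture by reducing to the known extremes of the parameter range and interpolating between them. The case $M = N$ is exactly the content of Section~\ref{degen}: composing the Koszul equivalence $\varkappa$ with the mirabolic Satake equivalence $\Phi^{1,1}$ yields $SD(\ul\GL(N|N)) \iso SD_{!*\bG_\bO}(\Gr \times \bV)$, and the required $t$-exactness is recorded there. For $M = N-1$, the group $U_{N-1,N}$ is trivial by construction, so the conjecture reduces to the equivalence $SD(\ul\GL(N-1|N)) \iso SD_{\GL(N-1,\bO)}(\Gr_{\GL_N})$ obtained from $\ol\varkappa$ and $\ol\Phi$ in the same section. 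For $M = 0$, one has $\ul\GL(0|N) = \GL_N$ and $U_{0,N}$ is a full unipotent radical with nondegenerate character, so the conjecture becomes the classical Fundamental Local Equivalence $\Rep(\GL_N) \simeq \Whit(\Gr_{\GL_N})$ proved in~\cite{Gait-whit,Gait-lur}.

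For intermediate $0 < M < N-1$, the first step is a Koszul duality analogous to $\varkappa$ and $\ol\varkappa$. Since the odd-odd bracket of $\ul\gl(M|N)$ vanishes, a direct Chevalley--Eilenberg computation identifies $\Ext^\bullet_{\ul\GL(M|N)}(\CC, \CC)$ with $\fS^\bullet_{M,N} := \Sym^\bullet\bigl(\Hom(\CC^M,\CC^N)[-1] \oplus \Hom(\CC^N,\CC^M)[-1]\bigr)$ as a $\GL_M \times \GL_N$-module. Standard Koszul duality arguments then produce $SD(\ul\GL(M|N)) \iso SD_\perf^{\GL_M \times \GL_N}(\fS^\bullet_{M,N})$; this step is essentially formal, and parallel to the constructions already performed for $(M,N) = (N,N)$ and $(N-1,N)$.

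The heart of the problem, and the main obstacle, is constructing a Whittaker--mirabolic Satake-type equivalence $SD_\perf^{\GL_M \times \GL_N}(\fS^\bullet_{M,N}) \iso D^{(\GL(M,\bO)\ltimes U_{M,N}(\bF), \chi_{M,N})}(\Gr_{\GL_N})$. The approach I would take is induction on $N - M$, descending from the base case $M = N-1$ by relating the $(M,N)$ category to the $(M+1,N)$ category through imposing an additional $(U_{M,M+1}, \chi_{M,M+1})$-equivariance under the embedding $\GL_M \hookrightarrow \GL_{M+1}$; on the coherent side this should correspond under Koszul duality to a super-analogue of parabolic restriction from $\ul\GL(M+1|N)$ to $\ul\GL(M|N)$, which one must construct and match. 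The hardest step will be verifying compatibility of this inductive procedure with the fusion monoidal structures and with the $t$-structures: $t$-exactness in the intermediate range would require identifying the $\IC$-sheaves of $\GL(M,\bO) \ltimes U_{M,N}(\bF)$-orbits on $\Gr_{\GL_N}$ with Kac modules for $\ul\GL(M|N)$ along the inductive comparison, and this orbit structure (unlike the $M = N-1$ case, where one has classical double cosets) is of semi-infinite type and must be handled with the same care as in the mirabolic setting of Section~\ref{coherent realization}.
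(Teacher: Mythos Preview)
The statement is a \emph{conjecture}, and the paper does not claim a proof of it in general: immediately after its formulation the paper states ``In the present work we prove Conjecture~\ref{main-0} for $M=N$ and $M=N-1$.'' So there is no paper proof of the full statement to compare against. For the two cases the paper does treat, your account matches the paper exactly: $M=N$ is Corollary~\ref{exact spherical} (via $\Phi^{1,1}\circ\varkappa$), and $M=N-1$ is Corollary~\ref{exact GL(N-1)} (via $\ol\Phi\circ\ol\varkappa$). Your identification of the $M=0$ case with~(\ref{whit-cl}) is also correct.

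For the intermediate range $0<M<N-1$, your proposal is a plan rather than a proof, and it contains a concrete error. You propose descending from $(M+1,N)$ to $(M,N)$ by ``imposing an additional $(U_{M,M+1},\chi_{M,M+1})$-equivariance under the embedding $\GL_M\hookrightarrow\GL_{M+1}$''. But by the paper's definition of $U_{M,N}$ (see the paragraph preceding Remark~\ref{Roma's approach}), the group $U_{M,M+1}$ is \emph{trivial}, so the step you describe is vacuous. The actual passage from the $(M+1,N)$-Whittaker condition to the $(M,N)$-Whittaker condition takes place inside $\GL_N$, not inside $\GL_{M+1}$: one enlarges the nilpotent $e_{M+1,N}$ (Jordan block of size $N-M-1$) to $e_{M,N}$ (Jordan block of size $N-M$), correspondingly enlarging $U_{M+1,N}\subset U_{M,N}\subset\GL_N$ and shrinking the reductive factor from $\GL_{M+1}$ to $\GL_M$. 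Formulating this as a functor between the two equivariant categories, and matching it on the coherent side with a restriction functor $\Rep(\ul\GL(M+1|N))\to\Rep(\ul\GL(M|N))$, is precisely the missing content; nothing in your sketch supplies it. The honest status is that the general case remains open.
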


In the present work we prove~Conjecture~\ref{main-0} for $M=N$ and $M=N-1$.

\section{A coherent realization of $D_{\GL(N,\bO)}(\Gr\times\bV)$}
\label{coherent realization}

\subsection{Setup and notation}
\label{setup}
We follow the notation of~\cite{fgt}.
Recall that $\Gr=\Gr_{\GL_N}=\bG_\bF/\bG_\bO=\GL(N,\bF)/\GL(N,\bO)$, where
$\bF=\BC\dprts{t}\supset\BC\dbkts{t}=\bO$. We consider a complex vector space
$V$ with a basis $e_1,\ldots,e_N$. 
We set $\bV=V\otimes\bF\supset V\otimes\bO=\bV_0$.

Recall that the $\bG_\bF$-orbits in $\Gr\times\Gr$ (resp.\
in $\Gr\times\Gr\times{\vphantom{j^{X^2}}\smash{\overset{\circ}{\vphantom{\rule{0pt}{0.55em}}\smash{\mathbf V}}}}$)
are numbered in~\cite[Section~3.1]{fgt} by signatures\footnote{sequences of integers
$\bnu=(\nu_1\geq\ldots\geq\nu_N)$, following the terminology of H.~Weyl.}
(resp.\ by bisignatures, i.e.\ pairs of signatures) in such a way that
the $\bG_\bF$-orbits on $\Gr\times\Gr$ numbered by partitions
$\bnu=(\nu_1\geq\ldots\geq\nu_N\geq0)$ correspond to the pairs of lattices $L^1\subset L^2$.
More precisely, the orbit corresponding to a bisignature $(\blambda,\bmu)$ contains a
point \[\big(L^1=\bO\langle e_1,e_2,\ldots,e_N\rangle,\ L^2=\bO\langle t^{-\lambda_1-\mu_1}e_1,
t^{-\lambda_2-\mu_2}e_2,\ldots,t^{-\lambda_N-\mu_N}e_N\rangle,\ v=\sum_{i=1}^Nt^{-\lambda_i}e_i\big).\]

Irreducible representations of $\GL_N=\GL(V)$ are also numbered by the signatures, so that e.g.\ the
determinant character $\det V$ corresponds to $(1^N)$. To a signature
$\bnu=(\nu_1\geq\ldots\geq\nu_N)$ we associate an irreducible representation $V_\bnu$ with the
highest weight $\bnu$.
The geometric Satake equivalence takes the
irreducible perverse sheaf $\IC_\bnu$ to the irreducible representation $V^*_\bnu$.
Thus if $\bnu$ is a partition (resp.\ a negative partition $(0\geq\nu_1\geq\ldots\geq\nu_N)$), then
$V^*_\bnu$ is an antipolynomial (resp.\ polynomial) representation of $\GL_N$
(a polynomial functor in $V^*,\ V$ respectively).

A word of apology for our weird
  convention is in order. The numbering of $\bG_\bO$-orbits in $\Gr$ such that the orbits of
  {\em sub}lattices $L\subset\bV_0$ are numbered by partitions goes back at least
  to~\cite{lu}. We choose the numbering such that the orbits of sublattices are numbered by
  negative partitions since under this numbering the adjacency order of $\bG_\bO$-orbits
  in $\Gr\times{\vphantom{j^{X^2}}\smash{\overset{\circ}{\vphantom{\rule{0pt}{0.55em}}\smash{\mathbf V}}}}$ goes to Shoji's order~\cite{s},~\cite[Proposition~12]{fgt} on the set of
  bisignatures. Furthermore, we choose the Satake equivalence $\IC_\bnu\mapsto V^*_\bnu$
  (as opposed to $\IC_\bnu\mapsto V_\bnu$) since it makes the statement of our main
  result~Theorem~\ref{main spherical mirabolic} more neat.

\subsection{Constructible mirabolic category and convolutions}
\label{conv constr}
The triangulated category
$D_{\bG_\bO}(\Gr\times{\vphantom{j^{X^2}}\smash{\overset{\circ}{\vphantom{\rule{0pt}{0.55em}}\smash{\mathbf V}}}})$
is defined as in~\cite[Section~2.6]{fgt}. We will denote it by $D_{*\bG_\bO}(\Gr\times\bV)$.
Recall that an object $\CF$ of $D_{*\bG_\bO}(\Gr\times\bV)$ is supported on $\Gr\times t^m\bV_0$
for certain $m\in\BZ$, and there exists $n>m$ and a $\bG_\bO$-equivariant sheaf $\CF_n$ on
$\Gr\times(t^m\bV_0/t^n\bV_0)$
such that $\CF=p_n^*\CF_n$, where \[p_n\colon\Gr\times t^m\bV_0\to\Gr\times(t^m\bV_0/t^n\bV_0)\]
is the natural projection. In other words, $\CF$ is a collection of $\bG_\bO$-equivariant
sheaves $\CF_{n'}$ on $\Gr\times(t^m\bV_0/t^{n'}\bV_0)$ for $n'\geq n$ along with a compatible
system of isomorphisms $p_{n''/n'}^*\CF_{n'}\iso\CF_{n''}$ for $n''\geq n'$, where
\[p_{n''/n'}\colon \Gr\times t^m\bV_0/t^{n''}\bV_0\twoheadrightarrow\Gr\times t^m\bV_0/t^{n'}\bV_0\]
are the natural projections.

If we replace in the above definition $p_n^*$ by $p_n^!$ and $p_{n''/n'}^*$ by $p_{n''/n'}^!$, then
we obtain a triangulated category $D_{!\bG_\bO}(\Gr\times\bV)$. Note that $p_{n''/n'}$ is a smooth
morphism of relative dimension $N(n''-n')$, so we have a canonical isomorphism
$p_{n''/n'}^!\cong p_{n''/n'}^*[2N(n''-n')]$. We also consider the intermediate version
\[p_{n''/n'}^{!*}:=p_{n''/n'}^*[N(n''-n')]=p_{n''/n'}^![-N(n''-n')],\] exact for the perverse $t$-structure.
The corresponding triangulated category will be denoted $D_{!*\bG_\bO}(\Gr\times\bV)$.

\bigskip

We will make use of an identification
$\Gr\times\bV=\bG_\bF\stackrel{\bG_\bO}{\times}\bV=(\bG_\bF\times\bV)/\bG_\bO$
(quotient with respect to the diagonal right-left
action). We will denote the orbit of $(g,v)\in\bG_\bF\times\bV$ by $[g,v]$.
Note that the left diagonal action of $\bG_\bO$ on $\Gr\times\bV$ in terms of the
above identification is $h\cdot[g,v]=[hg,v]$.
We consider the following convolution diagram:
\begin{equation}
  \begin{CD}
    \bG_\bF\times(\Gr\times\bV) @>{\bq}>>
    \bG_\bF\overset{\bG_\bO}{\times}(\Gr\times\bV)\\
    @V{\bp}VV @V{\bm}VV\\
  (\Gr\times\bV)\times(\Gr\times\bV) @.
    \Gr\times\bV,
    \end{CD}
\end{equation}
\[([g_1,g_2v],[g_2,v])\xleftarrow{\bp}(g_1,[g_2,v])\xrightarrow{\bq}[g_1,[g_2,v]]
\xrightarrow{\bm}[g_1g_2,v].\]
Given $\CF_1,\CF_2\in D_{!\bG_\bO}(\Gr\times\bV)$, we define
$\CF_1\srel!\oast\CF_2:=\bm_*(\CF_1\tbx^!\CF_2)$, where
$\CF_1\tbx^!\CF_2\in
D_{!\bG_\bO}\big(\bG_\bF\overset{\bG_\bO}{\times}(\Gr\times\bV)\big)$
is the canonical descent of $\bp^!(\CF_1\boxtimes\CF_2)$ along~$\bq$.
Similarly, given $\CF_1,\CF_2\in D_{*\bG_\bO}(\Gr\times\bV)$, we define
$\CF_1\srel*\oast\CF_2:=\bm_*(\CF_1\tbx^*\CF_2)$, where
$\CF_1\tbx^*\CF_2\in
D_{*\bG_\bO}\big(\bG_\bF\overset{\bG_\bO}{\times}(\Gr\times\bV)\big)$
is the canonical descent of $\bp^*(\CF_1\boxtimes\CF_2)$ along~$\bq$.
a unique sheaf such that $\bp^*(\CF_1\boxtimes\CF_2)=\bq^*(\CF_1\tbx^*\CF_2)$.

We also consider another convolution diagram
\begin{equation}
  \label{**}
  \begin{CD}
    \bG_\bF\times\bV\times(\Gr\times\bV) @>{\fq}>>
    \bG_\bF\overset{\bG_\bO}{\times}(\Gr\times\bV\times\bV)\\
    @V{\fp}VV @V{\fm}VV\\
  (\Gr\times\bV)\times(\Gr\times\bV) @.
    \Gr\times\bV,
    \end{CD}
\end{equation}
\[([g_1,v_1],[g_2,v_2])\xleftarrow{\fp}(g_1,v_1,[g_2,v_2])\xrightarrow{\fq}[g_1,[g_2,g_2^{-1}v_1,v_2]]
\xrightarrow{\fm}[g_1g_2,g_2^{-1}v_1+v_2].\]
Given $\CF_1,\CF_2\in D_{!\bG_\bO}(\Gr\times\bV)$, we define
$\CF_1\srel!*\CF_2:=\fm_!(\CF_1\tbx^!\CF_2)$, where
$\CF_1\tbx^!\CF_2\in
D_{!\bG_\bO}\big(\bG_\bF\overset{\bG_\bO}{\times}(\Gr\times\bV\times\bV)\big)$
is the canonical descent of $\fp^!(\CF_1\boxtimes\CF_2)$ along $\fq$.
Similarly, given $\CF_1,\CF_2\in D_{*\bG_\bO}(\Gr\times\bV)$, we define
$\CF_1\srel**\CF_2:=\fm_*(\CF_1\tbx^*\CF_2)$, where
$\CF_1\tbx^*\CF_2\in
D_{*\bG_\bO}\big(\bG_\bF\overset{\bG_\bO}{\times}(\Gr\times\bV\times\bV)\big)$
is the canonical descent of $\fp^*(\CF_1\boxtimes\CF_2)$ along $\fq$.

\bigskip

If we formally put $v_1=0$ (resp.\ $v_2=0$) in~(\ref{**}), we obtain the convolution diagrams
\begin{equation}
  \label{left conv}
  \begin{CD}
    \bG_\bF\times\Gr\times\bV @>{\fq}>>
    \bG_\bF\overset{\bG_\bO}{\times}(\Gr\times\bV)\\
    @V{\fp_\lef}VV @V{\fm}VV\\
  \Gr\times(\Gr\times\bV) @.
    \Gr\times\bV,
    \end{CD}
\end{equation}
\[([g_1],[g_2,v])\xleftarrow{\fp_\lef}(g_1,[g_2,v])\xrightarrow{\fq}[g_1,[g_2,v]]
\xrightarrow{\fm}[g_1g_2,v],\ \on{resp}.\]
\begin{equation}
  \label{right conv}
  \begin{CD}
    \bG_\bF\times\bV\times\Gr @>{\fq}>>
    \bG_\bF\overset{\bG_\bO}{\times}(\Gr\times\bV)\\
    @V{\fp_\righ}VV @V{\fm}VV\\
  (\Gr\times\bV)\times\Gr @.
    \Gr\times\bV,
    \end{CD}
\end{equation}
\[([g_1,v],[g_2])\xleftarrow{\fp_\righ}(g_1,v,[g_2])\xrightarrow{\fq}[g_1,[g_2,g_2^{-1}v]]
\xrightarrow{\fm}[g_1g_2,g_2^{-1}v].\]
Given $\CP\in D_{\bG_\bO}(\Gr)$ and $\CF\in D_{?\bG_\bO}(\Gr\times\bV)$ (where $?=!,*,!*$),
we define $\CP*\CF:=\fm_*(\CP\tbx\CF)$, where
$\CP\tbx\CF\in D_{?\bG_\bO}(\bG_\bF\overset{\bG_\bO}{\times}(\Gr\times\bV)\big)$ is the canonical
descent of $\fp_\lef^*(\CP\boxtimes\CF)$ along $\fq$. We also define $\CF*\CP:=\fm_*(\CF\tbx\CP)$,
where $\CF\tbx\CP\in D_{?\bG_\bO}(\bG_\bF\overset{\bG_\bO}{\times}(\Gr\times\bV)\big)$
is the canonical descent of $\fp_\righ^*(\CF\boxtimes\CP)$ along $\fq$. Both the left and right
convolutions are bi-exact for the perverse $t$-structures on $D_{\bG_\bO}(\Gr)$ and
$D_{!*\bG_\bO}(\Gr\times\bV)$, see~\cite[Section~3.9]{fgt}.

\subsection{Fusion}
Let $X$ be a smooth curve. For any integer $k>0$, and a collection $x=(x_i)_{i=1}^k$ of
$S$-points of $X$, we denote by $\CalD_x$ the formal neighborhood of the union of graphs
$|x| := \bigcup_{i=1}^k \Gamma_{x_i}\subset S\times X$, and we denote by
$\cD_x^\circ :=\CalD_x \setminus |x|$ the punctured formal neighborhood.
The mirabolic version of the Beilinson-Drinfeld Grassmannian is the
     ind-scheme $\Gr^\mir_{BD,k}$ over $X^k$ parametrizing the following collections of data:
\[ (x_i)_{i=1}^k,\ \CE,\ \phi\colon \CE_\triv|_{\CalD_x^\circ}\iso\CE|_{\CalD_x^\circ},\
v\in \Gamma(\CalD_x^\circ,\cE) , \]
where $\CE$ is a rank $N$ vector bundle on $\CalD_x$.
In case $X=\BA^1$, over the complement to the diagonals we have a canonical isomorphism
\[(\BA^k\setminus\Delta)\x_{\BA^k}\Gr^\mir_{BD,k}\cong(\BA^k\setminus\Delta)\times(\Gr\times\bV)^k.\] We denote the projection
$(\BA^k\setminus\Delta)\times(\Gr\times\bV)^k\to(\Gr\times\bV)^k$ by $\on{pr}_2$.
Given $\CF_1,\CF_2\in D_{?\bG_\bO}(\Gr\times\bV)$ (where $?={}!,*,!*$) we take $k=2$ and define
the fusion
\[\CF_1\star\CF_2:=\pr_{2*}\psi_{x-y}\on{pr}_2^*(\CF_1\boxtimes\CF_2)[1],\]
where $x,y$ are coordinates on $\BA^2$ (so that $x-y=0$ is the equation of the diagonal
$\Delta\subset\BA^2$), and $\psi_{x-y}$ is the nearby cycles functor for the pullback of the
function $x-y$ to $\Gr^\mir_{BD,2}$, normalized so as to preserve the perverse $t$-structure.
Note that the leftmost occurence of $\pr_2$ in the above definition projects
$\BA^1\times\Gr\times\bV$ to $\Gr\times\bV$, while the rightmost occurence of $\pr_2$ projects
$(\BA^2\setminus\Delta)\times(\Gr\times\bV)^2$ to $(\Gr\times\bV)^2$.

\subsection{Coherent mirabolic category and convolutions}
\label{conv coh}
We write $\Pi E$ for an odd vector space obtained from a vector space $E$ by reversing the parity.

We fix a pair of $N$-dimensional vector spaces $V_1\simeq\BC^N\simeq V_2$.
We consider the Lie superalgebra $\fg\fl(N|N)=\fg\fl(V_1\oplus\Pi V_2)$.
We have $\fg\fl(N|N)=\fg_{\bar0}\oplus\fg_{\bar1}$, where
$\fg_{\bar1}=\Pi\Hom(V_1,V_2)\oplus\Pi\Hom(V_2,V_1)$, and
$\fg_{\bar0}=\End(V_1)\oplus\End(V_2)$. We set $G_{\bar0}=\GL(V_1)\times\GL(V_2)$.
We consider the dg-algebra\footnote{We view
  $\fg_{\bar1}$ as an {\em odd} vector space, so that $\Sym(\fg_{\bar1}[-1])$
  (with grading disregarded) is really a symmetric (infinite-dimensional)
  algebra, not an exterior algebra.}
   $\fG_{1,1}^\bullet=\Sym(\fg_{\bar1}[-1])$ with zero differential,
and the triangulated category $D^{G_{\bar0}}_\perf(\fG_{1,1}^\bullet)$ obtained by localization
(with respect to quasi-isomorphisms) of the category of perfect $G_{\bar0}$-equivariant
dg-$\fG_{1,1}^\bullet$-modules.
The category $D^{G_{\bar0}}_\perf(\fG_{1,1}^\bullet)$ is monoidal with respect to
$\CM,\CM'\mapsto\CM\otimes_{\fG_{1,1}^\bullet}\CM'$, see~Section~\ref{super} below.

We will also need two more versions of $\fG_{1,1}^\bullet$, namely
\[\fG_{0,2}^\bullet=\Sym(\Hom(V_1,V_2))\otimes\Sym(\Hom(V_2,V_1)[-2]),\]
\[\fG_{2,0}^\bullet=\Sym(\Hom(V_1,V_2)[-2])\otimes\Sym(\Hom(V_2,V_1)),\] and the
corresponding triangulated categories $D^{G_{\bar0}}_\perf(\fG_{0,2}^\bullet)$ and
$D^{G_{\bar0}}_\perf(\fG_{2,0}^\bullet)$.
Now we will define the monoidal structures on $D^{G_{\bar0}}_\perf(\fG_{0,2}^\bullet)$ and
$D^{G_{\bar0}}_\perf(\fG_{2,0}^\bullet)$.

\bigskip

We consider the variety $\CQ^A$ (resp.\ $\CQ^B$)
of sixtuples \[A\in\Hom(V_1,V_2),\ B\in\Hom(V_2,V_1),\
A'\in\Hom(V'_1,V_2),\] \[B'\in\Hom(V_2,V'_1),\ A''\in\Hom(V_1,V'_1),\ B''\in\Hom(V'_1,V_1),\]
such that \[A=A'A'',\ B'=A''B,\ B''=BA'\ (\on{resp}.\ B=B''B',\ A'=AB'',\ A''=B'A)\] (here
$V'_1$ is a copy of $V_1$).
Clearly, $\CQ^A\simeq\Hom(V_1,V'_1)\times\Hom(V'_1,V_2)\times\Hom(V_2,V_1)$, and
$\CQ^B\simeq\Hom(V'_1,V_1)\times\Hom(V_2,V'_1)\times\Hom(V_1,V_2)$.
\begin{equation}
  \label{right half}
\vcenter{\xymatrix @C=3em @R=0.5em{
V_1\ar@<-0.25ex>@{-->}@/^/[dr]_A \ar@<-0.25ex>[dd]_{A''} \\
& V_2\ar@<-0.25ex>@/_/[ul]_B \ar@<-0.25ex>@{-->}@/^/[dl]_{B'\!} \\
V'_1\ar@<-0.25ex>@/_/[ur]_{A'} \ar@<-0.25ex>@{-->}[uu]_{B''}
}}
\end{equation}
We denote $\Hom(V_1,V_2)\times\Hom(V_2,V_1)\times\Hom(V'_1,V_2)\times\Hom(V_2,V'_1)\times
\Hom(V_1,V'_1)\times\Hom(V'_1,V_1)$ by $\CH$.
We have the natural projections
\[\pr_{12}\colon\CH\to\Hom(V_1,V_2)\times\Hom(V_2,V_1),\ \pr_{1'2}\colon\CH\to
\Hom(V'_1,V_2)\times\Hom(V_2,V'_1),\] \[\pr_{11'}\colon\CH\to\Hom(V_1,V'_1)\times\Hom(V'_1,V_1).\]
The group
$G_\CQ:=\GL(V_1)\times\GL(V'_1)\times\GL(V_2)$ naturally acts on $\CH$:
\[(g_1,g'_1,g_2)(A,A',A'',B,B',B'')=
(g_2Ag_1^{-1},g_2A'g'^{-1}_1,g'_1A''g_1^{-1},g_1Bg_2^{-1},g'_1B'g_2^{-1},g_1B''g'^{-1}_1).\]
The projections $\pr_{12},\pr_{1'2},\pr_{11'}$ are equivariant with respect to the same
named projections from $G_\CQ$ to $\GL(V_1)\times\GL(V_2),\ \GL(V'_1)\times\GL(V_2),\
\GL(V_1)\times\GL(V'_1)$.

Given $\CM_{1'2}\in\Coh^{\GL(V'_1)\times\GL(V_2)}\big(\Hom(V'_1,V_2)\times\Hom(V_2,V'_1)\big)$
and $\CM_{11'}\in\Coh^{\GL(V_1)\times\GL(V'_1)}\big(\Hom(V_1,V'_1)\times\Hom(V'_1,V_1)\big)$ we set
\begin{multline*}\CM_{11'}\srel{A}*\CM_{1'2}:=
  \pr_{12*}(\pr_{11'}^*\CM_{11'}\otimes_{\BC[\CH]}\BC[\CQ^A]\otimes_{\BC[\CH]}
  \pr_{1'2}^*\CM_{1'2})^{\GL(V'_1)}\in\\
  \Coh^{\GL(V_1)\times\GL(V_2)}\big(\Hom(V_1,V_2)\times\Hom(V_2,V_1)\big),
  \end{multline*}
  \begin{multline*}\CM_{11'}\srel{B}*\CM_{1'2}:=
  \pr_{12*}(\pr_{11'}^*\CM_{11'}\otimes_{\BC[\CH]}\BC[\CQ^B]\otimes_{\BC[\CH]}
  \pr_{1'2}^*\CM_{1'2})^{\GL(V'_1)}\in\\
  \Coh^{\GL(V_1)\times\GL(V_2)}\big(\Hom(V_1,V_2)\times\Hom(V_2,V_1)\big).
  \end{multline*}
We will actually need the following modifications of these functors:
\[\srel{B}*\colon D^{G_{\bar0}}_\perf(\fG_{0,2}^\bullet)\times D^{G_{\bar0}}_\perf(\fG_{0,2}^\bullet)\to
D^{G_{\bar0}}_\perf(\fG_{0,2}^\bullet),\
\srel{A}*\colon D^{G_{\bar0}}_\perf(\fG_{2,0}^\bullet)\times D^{G_{\bar0}}_\perf(\fG_{2,0}^\bullet)\to
D^{G_{\bar0}}_\perf(\fG_{2,0}^\bullet)\]
obtained using the dg-algebras with trivial differentials $\BC[\CH]_{0,2}^\bullet$ and
$\BC[\CH]_{2,0}^\bullet$ respectively, where
\begin{multline*}\BC[\CH]_{0,2}^\bullet=\Sym(\Hom(V_1,V_2))\otimes\Sym(\Hom(V_2,V_1)[-2])
  \otimes\Sym(\Hom(V'_1,V_2))\\
  \otimes\Sym(\Hom(V_2,V'_1)[-2])\otimes\Sym(\Hom(V_1,V'_1))\otimes\Sym(\Hom(V'_1,V_1)[-2]),
  \end{multline*}
\begin{multline*}\BC[\CH]_{2,0}^\bullet=\Sym(\Hom(V_1,V_2)[-2])\otimes\Sym(\Hom(V_2,V_1))
  \otimes\Sym(\Hom(V'_1,V_2)[-2])\\
  \otimes\Sym(\Hom(V_2,V'_1))\otimes\Sym(\Hom(V_1,V'_1)[-2])\otimes\Sym(\Hom(V'_1,V_1)).
\end{multline*}
(we identify $\BC[\Hom(U,W)]$ with $\Sym\Hom(W,U)$).

\subsection{Localization, coherent}
\label{loc coh}
We identify \[\Hom(V_1,V_2)=\Hom(V_2,V_1)^*,\ \Hom(V_2,V_1)=\Hom(V_1,V_2)^*,\] so that
\[\Sym(\Hom(V_1,V_2))=\BC[\Hom(V_2,V_1)],\ \Sym(\Hom(V_2,V_1))=\BC[\Hom(V_1,V_2)].\]
We have an open subvariety $\Isom(V_2,V_1)\subset\Hom(V_2,V_1)$, so that
$\BC[\Hom(V_2,V_1)]\subset\BC[\Isom(V_2,V_1)]$. We set
\[\fB^\bullet:=\BC[\Isom(V_2,V_1)]\otimes\Sym(\Hom(V_2,V_1)[-2])\] (a dg-algebra with
trivial differential).
Similarly, we define \[\fA^\bullet:=\Sym(\Hom(V_1,V_2)[-2])\otimes\BC[\Isom(V_1,V_2)].\]
An equivalent formulation of~\cite[Theorem~5]{bf} is an existence of a monoidal
equivalence $D^{G_{\bar0}}_\perf(\fA^\bullet)\cong D_{\bG_\bO}(\Gr)$ (and, changing the roles of
$V_1,V_2$, a monoidal equivalence $D^{G_{\bar0}}_\perf(\fB^\bullet)\cong D_{\bG_\bO}(\Gr)$).

Since $\fA^\bullet$ (resp.\ $\fB^\bullet$) is a localization of $\fG_{2,0}^\bullet$ (resp.\ of
$\fG_{0,2}^\bullet$), we have the restriction of scalars functors
\[\Res_A\colon D^{G_{\bar0}}_\perf(\fA^\bullet)\to\hat{D}{}^{G_{\bar0}}_\perf(\fG_{2,0}^\bullet),\
\Res_B\colon D^{G_{\bar0}}_\perf(\fB^\bullet)\to\hat{D}{}^{G_{\bar0}}_\perf(\fG_{0,2}^\bullet),\]
where $\hat{D}{}^{G_{\bar0}}_\perf(\fG_{?,?}^\bullet)$ stands for the Ind-completion of
$D^{G_{\bar0}}_\perf(\fG_{?,?}^\bullet)$.

However, one can check that for $\CN\in D^{G_{\bar0}}_\perf(\fA^\bullet)$ and
$\CM\in D^{G_{\bar0}}_\perf(\fG_{2,0}^\bullet)$ both convolutions $\Res_A(\CN)\srel{A}*\CM$
and $\CM\srel{A}*\Res_A(\CN)$ lie in
$D^{G_{\bar0}}_\perf(\fG_{2,0}^\bullet)\subset\hat{D}{}^{G_{\bar0}}_\perf(\fG_{2,0}^\bullet)$.
Thus we have the left and right convolution actions
\[\srel{A}*\colon D^{G_{\bar0}}_\perf(\fA^\bullet)\times D^{G_{\bar0}}_\perf(\fG_{2,0}^\bullet)\to
D^{G_{\bar0}}_\perf(\fG_{2,0}^\bullet),\ D^{G_{\bar0}}_\perf(\fG_{2,0}^\bullet)\times D^{G_{\bar0}}_\perf(\fA^\bullet)
\to D^{G_{\bar0}}_\perf(\fG_{2,0}^\bullet),\]
and similarly
\[\srel{B}*\colon D^{G_{\bar0}}_\perf(\fB^\bullet)\times D^{G_{\bar0}}_\perf(\fG_{0,2}^\bullet)\to
D^{G_{\bar0}}_\perf(\fG_{0,2}^\bullet),\ D^{G_{\bar0}}_\perf(\fG_{0,2}^\bullet)\times D^{G_{\bar0}}_\perf(\fB^\bullet)
\to D^{G_{\bar0}}_\perf(\fG_{0,2}^\bullet).\]

\subsection{Renormalizations}
\label{renorm}
The action of the centre $Z(\GL(V_1))\cong\BG_m$ on an object
$\CM\in D^{G_{\bar0}}_\perf(\fG_{1,1}^\bullet)$ defines a grading, and the corresponding degrees
will be denoted by $\deg_1$. Similarly, the action of $Z(\GL(V_2))\cong\BG_m$ gives rise to
another grading with degrees denoted by $\deg_2$. The cohomological degrees will be denoted
simply by $\deg$. Clearly, the degrees of the generators are as follows:
\begin{multline*}
  \deg_2(\Hom(V_1,V_2))=\deg_1(\Hom(V_2,V_1))=1,\\
  \deg_1(\Hom(V_1,V_2))=\deg_2(\Hom(V_1,V_2))=-1.
\end{multline*}
Hence changing cohomological degrees by the formula $\deg\leadsto\deg+\deg_1$,
resp.\ $\deg\leadsto\deg-\deg_2$, yields equivalences
\[D^{G_{\bar0}}_\perf(\fG_{2,0}^\bullet)\xrightarrow{\varrho_\righ}D^{G_{\bar0}}_\perf(\fG_{1,1}^\bullet)
\xrightarrow{\varrho_\righ}D^{G_{\bar0}}_\perf(\fG_{0,2}^\bullet),\]
\[D^{G_{\bar0}}_\perf(\fG_{2,0}^\bullet)\xrightarrow{\varrho_\lef}D^{G_{\bar0}}_\perf(\fG_{1,1}^\bullet)
\xrightarrow{\varrho_\lef}D^{G_{\bar0}}_\perf(\fG_{0,2}^\bullet),\]
respectively. The notation is due to the fact that $\varrho_\lef$ commutes with the left
action of $\Rep(\GL(V_1))$ on our categories, while $\varrho_\righ$ commutes with the
right action of $\Rep(\GL(V_2))$ on our categories.

Now recall the notation in the definition of categories $D_{?\bG_\bO}(\Gr\times\bV)$
(where $?={!},*,!*$) of~Section~\ref{setup}. Given $\CF=(\CF_n)_{n>m}\in D_{!\bG_\bO}(\Gr\times\bV)$
we define $\varrho_\righ\CF:=(\CF_n[-Nn])_{n>m}\in D_{!*\bG_\bO}(\Gr\times\bV)$. Similarly,
given $\CF=(\CF_n)_{n>m}\in D_{!*\bG_\bO}(\Gr\times\bV)$
we define $\varrho_\righ\CF:=(\CF_n[-Nn])_{n>m}\in D_{*\bG_\bO}(\Gr\times\bV)$.
The functors $\varrho_\righ$ commute with the action of the monoidal category
$D_{\bG_\bO}(\Gr)$ by the right convolutions.
Recall also that the affine Grassmannian is a union of connected components
$\Gr=\bigsqcup_{k\in\BZ}\Gr^{(k)}$, where $\Gr^{(k)}$ parametrizes the lattices of virtual
dimension $k$ (e.g.\ $\dim(t\bV_0)=-N$). For $\CF$ supported on $\Gr^{(k)}\times\bV$ we set
$\varrho_\lef(\CF):=\varrho_\righ(\CF)[-k]$. Then the functors \[\varrho_\lef\colon
D_{!\bG_\bO}(\Gr\times\bV)\to D_{!*\bG_\bO}(\Gr\times\bV),\ D_{!*\bG_\bO}(\Gr\times\bV)\to
D_{*\bG_\bO}(\Gr\times\bV)\] commute with the action of the monoidal category
$D_{\bG_\bO}(\Gr)$ by the left convolutions.

\bigskip

Our goal is the following

\begin{thm}
  \label{main spherical mirabolic}
  There exist monoidal \footnote{see Section~\ref{super} for the definition of the left middle
    monoidal structure.} equivalences of triangulated categories
  \begin{equation}
    \begin{CD}
      \big(D^{G_{\bar0}}_\perf(\fG_{2,0}^\bullet),\ \srel{A}*\big) @>{\sim}>{\Phi^{2,0}}>
      \big(D_{!\bG_\bO}(\Gr\times\oV),\ \srel!\oast\big)\\
      @V{\wr}V{\varrho_\righ}V @V{\wr}V{\varrho_\righ}V\\
      \big(D^{G_{\bar0}}_\perf(\fG_{1,1}^\bullet),\ \otimes_{\fG_{1,1}^\bullet}\big) @>{\sim}>{\Phi^{1,1}}>
      \big(D_{!*\bG_\bO}(\Gr\times\oV),\ \star\big)\\
      @V{\wr}V{\varrho_\righ}V @V{\wr}V{\varrho_\righ}V\\
      \big(D^{G_{\bar0}}_\perf(\fG_{0,2}^\bullet),\ \srel{B}*\big) @>{\sim}>{\Phi^{0,2}}>
      \big(D_{*\bG_\bO}(\Gr\times\oV),\ \srel**\big).
    \end{CD}
  \end{equation}
  (the vertical equivalences are {\em not} monoidal). The squares are commutative.
  The horizontal equivalences commute with the actions of the monoidal spherical
  Hecke category $\Perv_{\bG_\bO}(\Gr)\cong\Rep(\GL_N)$ by the left and right convolutions.
\end{thm}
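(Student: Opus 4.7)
The plan is threefold: first, construct the top equivalence $\Phi^{2,0}$, anchored by the classical derived Satake of~\cite{bf}; second, derive the bottom equivalence $\Phi^{0,2}$ via Fourier transform; third, define the middle $\Phi^{1,1}$ through renormalization and verify the tensor product $\leftrightarrow$ fusion correspondence. Throughout, the $\Rep(\GL_N) \cong \Perv_{\bG_\bO}(\Gr)$ bimodule structure on both sides serves as a rigidifying skeleton.

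For the first step, both $D^{G_{\bar0}}_\perf(\fG_{2,0}^\bullet)$ and $D_{!\bG_\bO}(\Gr\times\oV)$ carry commuting left and right actions of the spherical Hecke category (via $\GL(V_1),\GL(V_2)$ bi-equivariance, and via left/right convolutions along $\Gr$ from Section~\ref{conv constr}, respectively). By Section~\ref{loc coh}, localizing $\fG_{2,0}^\bullet$ to $\Isom(V_1,V_2)$ yields $\fA^\bullet$, while Section~\ref{qua} provides the matching constructible localization $D_{!\bG_\bO}(\Gr\times\oV) \to D_{\bG_\bO}(\Gr)$. The functor $\Phi^{2,0}$ is then pinned down by compatibility with these localizations and the BF equivalence $D^{G_{\bar0}}_\perf(\fA^\bullet) \iso D_{\bG_\bO}(\Gr)$, together with the image of a single bimodule generator; a natural choice is the $\fG_{2,0}^\bullet$-module $\Sym(\Hom(V_1,V_2)[-2])$, sent to the $!$-extension of the constant sheaf from $\bG_\bO\cdot(1,0) \subset \Gr\times\oV$. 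Full faithfulness reduces, via bimodule generation, to a single $\RHom$ computation that matches by the orbit-dimension formulas recalled in Section~\ref{setup}, and compatibility with the $\Rep(\GL_N)$-actions is built into this presentation. Monoidality against $\srel{A}*$ and $\srel!\oast$ requires matching the matrix-multiplication correspondence $\CQ^A$ of~(\ref{right half}) with the $\bG_\bF$-action convolution on $\Gr\times\oV$: over the open stratum ($\Isom \leftrightarrow \Gr$) this is BF monoidality, and it extends across the closed stratum by a Beilinson-Drinfeld-style specialization argument combined with the bimodule structure.

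For the second step, the Fourier-Sato transform $\FT$ along the Tate vector space $\oV$ gives an equivalence $D_{!\bG_\bO}(\Gr\times\oV) \iso D_{*\bG_\bO}(\Gr\times\oV)$ exchanging $\srel!\oast$ and $\srel**$. Its coherent counterpart is the odd-variable (Koszul) Fourier transform interchanging $\Sym(\Hom(V_1,V_2)[-2])$ with $\Sym(\Hom(V_2,V_1))$, which exchanges $\fG_{2,0}^\bullet$ with $\fG_{0,2}^\bullet$ and the correspondences $\CQ^A$ with $\CQ^B$. Setting $\Phi^{0,2} := \FT \circ \Phi^{2,0} \circ \FT^{-1}$ then produces the bottom equivalence, monoidal by construction; the Hecke compatibility is preserved because $\FT$ acts along $\oV$, not along $\Gr$.

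For the third step, define $\Phi^{1,1} := \varrho_\righ \circ \Phi^{2,0} \circ \varrho_\righ^{-1}$; the upper square commutes tautologically, and the lower square reduces to comparing $\varrho_\righ \circ \varrho_\righ$ with the cohomological shift implementing $\fG_{2,0}^\bullet \leadsto \fG_{0,2}^\bullet$. The genuinely new content, and the main obstacle, is the identification of $\otimes_{\fG_{1,1}^\bullet}$ with the fusion $\star$. The idea is to realize both as the special fibre of a one-parameter family whose generic fibre is (a $\bG_\bO$-equivariant deformation of) $\srel!\oast$: on the coherent side via a Rees-type degeneration interpolating $\fG_{2,0}^\bullet, \fG_{1,1}^\bullet, \fG_{0,2}^\bullet$ by deforming the cohomological grading, and on the constructible side via nearby cycles on the mirabolic Beilinson-Drinfeld Grassmannian $\Gr^\mir_{BD,2}$. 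The delicate point is the semi-infinite nature of $\bG_\bO$-orbits on $\Gr\times\oV$, which requires careful control of nearby cycles across the zero-section; Shoji's order on bisignatures and the Kostka-Shoji combinatorics of~\cite{fgt} provide the necessary bookkeeping, and $t$-exactness falls out as a byproduct, yielding the promised identification of $D_{!*\bG_\bO}(\Gr\times\oV)$ with $D^b(\Perv_{\bG_\bO}(\Gr\times\oV))$.
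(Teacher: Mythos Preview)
Your proposal has the right skeleton---use the Bezrukavnikov--Finkelberg equivalence over the open locus, then extend---but the decisive computation is missing, and the order of construction hides where the real work lies.

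The paper builds $\Phi^{1,1}$ \emph{first}, by computing the deequivariantized Ext algebra $\fE^\bullet:=\Ext^\bullet_{D^{\deeq}_{!*\bG_\bO}(\Gr\times\oV)}(E_0,E_0)$ of the unit object $E_0=\IC_{(0^N,0^N)}$ and identifying it with $\fG_{1,1}^\bullet$. Your Step~1 collapses exactly this into ``a single $\RHom$ computation that matches by the orbit-dimension formulas,'' which is not a proof. The actual identification requires: (i)~formality of the Ext algebra (Lemma~\ref{formality}), proved via purity and hyperbolic restriction to a loop-rotation fixed component; (ii)~injectivity of the map from $\fE$ into its localization (Lemma~\ref{injective}), again a purity argument, yielding that $\fE$ is a commutative integrally closed domain; and (iii)~the key step (Lemma~\ref{phi isom}): one constructs by hand degree-$1$ classes $h,h^*$ giving a map $\fG_{1,1}^\bullet\to\fE^\bullet$, checks it is an isomorphism over \emph{both} open loci $\Isom(V_2,V_1)\times\Hom(V_1,V_2)$ and $\Hom(V_2,V_1)\times\Isom(V_1,V_2)$ (using~\cite{bf} twice, via Corollary~\ref{localized ext}), and then applies a codimension-$2$ normality argument (Lemma~\ref{Roma's}). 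Using only the single localization to $\fA^\bullet$ that you mention would leave a codimension-$1$ complement, which is not enough. Once $\Phi^{1,1}$ is built, $\Phi^{2,0}$ and $\Phi^{0,2}$ are \emph{defined} by conjugating with $\varrho_\righ^{\pm1}$, so the squares commute tautologically.

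For monoidality, the paper does not use your proposed Rees-degeneration/nearby-cycles scheme. Instead, each monoidal structure is encoded by a kernel, and one compares kernels directly (Sections~\ref{20 monoidal},~\ref{proof main}): over the localized open locus the kernels agree by monoidality of~\cite{bf}; to extend across the boundary one uses the coherent description of the endofunctor $\bar\jmath_{0*}\bar\jmath_0^!$ (Proposition~\ref{corestriction}) to see that multiplication by $\det A''$ is injective on the unknown kernel, embedding it into its localization; a purity/weight argument then pins down the image. The Fourier-transform step (Proposition~\ref{FT}) is correct in spirit, but on the coherent side the intertwiner is the transposition $\tau\colon A\mapsto{}^tB$, not a Koszul transform. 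Your ``Beilinson--Drinfeld-style specialization'' and ``Kostka--Shoji bookkeeping'' are gestures toward the right intuitions but do not substitute for these concrete kernel comparisons.
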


The proof will be given in~Section~\ref{proof main} after a necessary preparation.

\subsection{Super}
\label{super}
Strictly speaking, in accordance with the footnote at the beginning of~Section~\ref{conv coh},
we should consider the category $SD^{G_{\bar0}}_\perf(\fG_{1,1}^\bullet)$ of {\em super} dg-modules
over the superalgebra $\fG_{1,1}^\bullet=\Sym(\fg_{\bar1}[-1])$. The latter superalgebra
is super-commutative, hence we have a symmetric monoidal structure
$\otimes_{\fG_{1,1}^\bullet}$ on the category $SD^{G_{\bar0}}_\perf(\fG_{1,1}^\bullet)$. The equivalence
$\Phi^{1,1}$ of~Theorem~\ref{main spherical mirabolic} can be upgraded to a monoidal equivalence
$S\Phi^{1,1}\colon \big(SD^{G_{\bar0}}_\perf(\fG_{1,1}^\bullet),\ \otimes_{\fG_{1,1}^\bullet}\big)\iso
\big(SD_{!*\bG_\bO}(\Gr\times\oV),\ \star\big)$ to the derived category of sheaves with
coefficients in {\em super} vector spaces.

However, the action of the central element $(\Id_{V_1},-\Id_{V_2})\in G_{\bar0}$ on an object of
$D^{G_{\bar0}}_\perf(\fG_{1,1}^\bullet)$ equips this object with an extra $\BZ/2\BZ$-grading, and thus
defines a fully faithful functor
$D^{G_{\bar0}}_\perf(\fG_{1,1}^\bullet)\to SD^{G_{\bar0}}_\perf(\fG_{1,1}^\bullet)$ of a ``superization'',
such that its essential image is
closed under the monoidal structure $\otimes_{\fG_{1,1}^\bullet}$. This defines the desired
monoidal structure $\otimes_{\fG_{1,1}^\bullet}$ on the category $D^{G_{\bar0}}_\perf(\fG_{1,1}^\bullet)$.

\subsection{Koszul equivalence}
\label{koszul}
We consider the following complex $H^\bullet$ of {\em odd} vector spaces living in degrees $0,1\colon
\fg_{\bar1}\stackrel{\Id}{\longrightarrow}\fg_{\bar1}$.
We define the Koszul complex $K^\bullet$ as the symmetric algebra $\Sym(H^\bullet)$. The degree
zero part \[K^0=\Lambda\big(\Hom(V_1,V_2)\oplus\Hom(V_2,V_1)\big)=:\Lambda\] (as a vector space,
with a super-structure disregarded). We turn $K^\bullet$ into a  dg-$\fG^\bullet_{1,1}-\Lambda$-bimodule
by letting $\fG^\bullet_{1,1}$ act by multiplication, and $\Lambda$ by differentiation.
Note that $K^\blt$ is quasi-isomorphic to $\BC$ in degree 0 as a complex of vector spaces,
but {\em not} as a dg-$\fG^\bullet_{1,1}-\Lambda$-bimodule.
We consider the derived category $D_\fid^{G_{\bar0}}(\Lambda)$ of finite dimensional complexes of
$G_{\bar0}\ltimes\Lambda$-modules. If we remember the super-structure of $\Lambda$, we obtain
the corresponding category of super dg-modules $SD_\fid^{G_{\bar0}}(\Lambda)$.
We have the Koszul equivalence functors
\[\varkappa\colon D_\fid^{G_{\bar0}}(\Lambda)\iso D^{G_{\bar0}}_\perf(\fG_{1,1}^\bullet),\
SD_\fid^{G_{\bar0}}(\Lambda)\iso SD^{G_{\bar0}}_\perf(\fG_{1,1}^\bullet),\
\CN\mapsto K^\bullet\otimes_\Lambda\CN.\]
Here is an equivalent definition of the category $SD_\fid^{G_{\bar0}}(\Lambda)$. We consider
the following degeneration $\ul\fgl(N|N)$ of the Lie superalgebra $\fgl({N|N})$:
the supercommutator of the even elements (with even or odd elements) remains intact, but the
supercommutator of any two odd elements is set to be zero. Let $SD_{\on{int}}(\ul\fgl(N|N))$
denote the derived category of bounded complexes of integrable $\ul\fgl(N|N)$-modules
(note that the even part of $\ul\fgl(N|N)$ is just $\fg_{\bar0}$, and the integrability is
nothing but $\fg_{\bar0}$-integrability, i.e.\ $G_{\bar0}$-equivariance). Then
$SD_{\on{int}}(\ul\fgl(N|N))\cong SD_\fid^{G_{\bar0}}(\Lambda)$ tautologically.
The resulting Koszul equivalence
$\varkappa\colon SD_{\on{int}}(\ul\fgl(N|N))\iso SD^{G_{\bar0}}_\perf(\fG_{1,1}^\bullet)$ is monoidal
with respect to the usual tensor structure on the LHS and $\otimes_{\fG^\bullet_{1,1}}$ on the RHS.

As in~Section~\ref{super}, the action of $(\Id_{V_1},-\Id_{V_2})\in G_{\bar0}$ gives rise to a
``superization'' fully faithful functor $D_\fid^{G_{\bar0}}(\Lambda)\to SD_\fid^{G_{\bar0}}(\Lambda)\cong
SD_{\on{int}}(\ul\fgl(N|N))$ with the essential image closed under the tensor structure.
This defines the tensor structure on $D_\fid^{G_{\bar0}}(\Lambda)$ such that the Koszul
equivalence $\varkappa\colon D_\fid^{G_{\bar0}}(\Lambda)\iso D^{G_{\bar0}}_\perf(\fG_{1,1}^\bullet)$ is
monoidal.

\begin{cor}[of Theorem~\ref{main spherical mirabolic}]
  \label{exact spherical}
  \textup{(a)} The composed equivalence
  \[\Phi^{1,1}\circ\varkappa\colon
  D_\fid^{G_{\bar0}}(\Lambda)\iso D_{!*\bG_\bO}(\Gr\times\oV)\]
  is exact with respect to the tautological $t$-structure on
  $D_\fid^{G_{\bar0}}(\Lambda)$
  and the perverse $t$-structure on $D_{!*\bG_\bO}(\Gr\times\oV)$.

  \textup{(b)} This equivalence is monoidal
  with respect to the tensor structure on 
  $D_\fid^{G_{\bar0}}(\Lambda)$ and the fusion $\star$
  on $D_{!*\bG_\bO}(\Gr\times\oV)$.

  \textup{(c)} The equivariant derived category $D_{!*\bG_\bO}(\Gr\times\bV)$ is equivalent to the
  bounded derived category of the abelian category $\Perv_{\bG_\bO}(\Gr\times\bV)$.
\end{cor}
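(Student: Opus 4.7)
The plan is to handle (b) first, use it to organize the argument for (a), and then derive (c) formally.

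Part (b) is a direct combination of results already in hand. Theorem~\ref{main spherical mirabolic} provides the monoidal equivalence $\Phi^{1,1}\colon (D^{G_{\bar0}}_\perf(\fG^\bullet_{1,1}),\otimes_{\fG^\bullet_{1,1}})\iso (D_{!*\bG_\bO}(\Gr\times\bV),\star)$, while Section~\ref{koszul} shows that $\varkappa$ is monoidal with respect to the tensor structure on $D_\fid^{G_{\bar0}}(\Lambda)$ (induced through the superization embedding) and $\otimes_{\fG^\bullet_{1,1}}$. Composing yields (b).

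For part (a) the plan is to check $t$-exactness on natural generators. On the algebraic side, since $G_{\bar0}$ is reductive and the augmentation ideal $\Lambda_{>0}$ must act trivially on any simple $G_{\bar0}\ltimes\Lambda$-module, the simples of the tautological heart of $D_\fid^{G_{\bar0}}(\Lambda)$ are exactly the irreducible $G_{\bar0}$-modules $V_\blambda\boxtimes V_\bmu$ indexed by bisignatures $(\blambda,\bmu)$. On the geometric side, the same indexing set parametrizes the $\bG_\bO$-orbits on $\Gr\times\bV$ (Section~\ref{setup}) and hence the simple objects $\IC_\blambda^\bmu$ of $\Perv_{\bG_\bO}(\Gr\times\bV)$. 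The strategy is to match these two families through $\Phi^{1,1}\circ\varkappa$ using two inputs: (i) Theorem~\ref{main spherical mirabolic} asserts that $\Phi^{1,1}$ intertwines the left and right convolution actions of the spherical Satake category $\Perv_{\bG_\bO}(\Gr)\cong\Rep(\GL_N)$, and highest-weight theory produces every $V_\blambda\boxtimes V_\bmu$ from the unit by iterated tensor product with the tautological representation; (ii) the monoidal unit of $(D_\fid^{G_{\bar0}}(\Lambda),\otimes)$ is the trivial one-dimensional $\Lambda$-module in degree zero, and the corresponding unit for $\star$ is the skyscraper perverse sheaf at the base point $(1,0)\in\Gr\times\bV$, which lies in the perverse heart. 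Once the simples are matched, one invokes the facts that any $t$-structure is determined by the closure of its simples under extensions, that convolutions of $\IC_\bla\in\Perv_{\bG_\bO}(\Gr)$ with perverse sheaves on $\Gr\times\bV$ remain perverse (end of Section~\ref{conv constr}), and that the analogous tensoring with $\Rep(\GL_N)$ is exact on the algebraic side; these together force $t$-exactness in both directions.

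Part (c) follows from (a) together with the tautological observation that $D_\fid^{G_{\bar0}}(\Lambda)$ is, by its very definition in Section~\ref{koszul}, the bounded derived category $D^b(\mathcal A)$ of the abelian category $\mathcal A$ of finite-dimensional $G_{\bar0}\ltimes\Lambda$-modules. Any $t$-exact equivalence of triangulated categories induces an equivalence on the bounded derived categories of the respective hearts, so (a) yields
\[
D_{!*\bG_\bO}(\Gr\times\bV)\ \simeq\ D_\fid^{G_{\bar0}}(\Lambda)\ =\ D^b(\mathcal A)\ \simeq\ D^b(\Perv_{\bG_\bO}(\Gr\times\bV)).
\]
The main obstacle lies in part (a): tracking cohomological and parity shifts through $\varkappa$ so as to identify $\varkappa(V_\blambda\boxtimes V_\bmu)$ with the object of $D^{G_{\bar0}}_\perf(\fG^\bullet_{1,1})$ whose image under $\Phi^{1,1}$ is precisely $\IC_\blambda^\bmu$. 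This bookkeeping must respect the renormalization functors $\varrho_\lef,\varrho_\righ$ of Section~\ref{renorm} and the bigrading coming from $Z(G_{\bar0})$; it may be cleaner to perform the identification after passing through one of the renormalized triangulated categories $D_{!\bG_\bO}(\Gr\times\bV)$ or $D_{*\bG_\bO}(\Gr\times\bV)$, where the standard/costandard objects $\Delta_\blambda^\bmu$ admit a more direct coherent description via the convolutions $\srel{A}*$ and $\srel{B}*$.
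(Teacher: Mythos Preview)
Your treatment of (b) and (c) matches the paper's. For (a), however, you are making the key step harder than it is and misidentifying where the difficulty lies.

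The ``main obstacle'' you name --- computing $\varkappa(V_{1,\blambda}\otimes V_{2,\bmu})$ --- is in fact a one-line calculation. Since $\Lambda$ acts trivially on the simple $G_{\bar0}\ltimes\Lambda$-module $V_{1,\blambda}\otimes V_{2,\bmu}$, tensoring the Koszul bimodule $K^\bullet$ over $\Lambda$ with it simply kills the exterior factor and leaves the symmetric one: $\varkappa(V_{1,\blambda}\otimes V_{2,\bmu})=V_{1,\blambda}\otimes\fG_{1,1}^\bullet\otimes V_{2,\bmu}$, the free $\fG_{1,1}^\bullet$-module on that $G_{\bar0}$-representation, sitting in the expected degree with no parity or shift subtleties. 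Then, directly from the construction of $\Phi^{1,1}$ via~Lemma~\ref{purity} and~Lemma~\ref{phi isom} (the deequivariantized $\Ext$-algebra description), this free module is sent to $\IC_{(\blambda^*,\bmu^*)}$. That is the entire proof of (a) in the paper: simples go to simples, hence the equivalence is $t$-exact. There is no need to route through the renormalizations $\varrho_\lef,\varrho_\righ$, through $D_{!\bG_\bO}$ or $D_{*\bG_\bO}$, or through standard/costandard objects.

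Your alternative route via Hecke actions does work in principle (every simple on either side is a summand of a Satake convolution applied to the unit, and these convolutions are $t$-exact on both sides), but it is more circuitous and the paper does not use it. Also, a small correction: the monoidal unit for $\star$ is not a skyscraper at a point but $E_0=\IC_{(0^N,0^N)}$, the constant perverse sheaf on $\Gr^0\times\bV_0$; this does not affect the argument since $E_0$ is still perverse.
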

\begin{proof}
  We consider an irreducible $G_{\bar0}$-module $V_{1,\blambda}\otimes V_{2,\bmu}$ as a
  $G_{\bar0}\ltimes\Lambda$-module with the trivial action of $\Lambda$. Then
  $\varkappa(V_{1,\blambda}\otimes V_{2,\bmu})=V_{1,\blambda}\otimes\fG_{1,1}^\bullet\otimes V_{2,\bmu}$,
  and $\Phi^{1,1}(V_{1,\blambda}\otimes\fG_{1,1}^\bullet\otimes V_{2,\bmu})=\IC_{(\blambda^*,\bmu^*)}$
  by construction of $\Phi^{1,1}$. Here for a signature $\bnu=(\nu_1\geq\ldots\geq\nu_N)$ we set
  $\bnu^*:=(-\nu_N\geq\ldots\geq-\nu_1)$.

  Finally, since $D_\fid^{G_{\bar0}}(\Lambda)$ is equivalent to the bounded derived category of
  its heart $\Rep(G_{\bar0}\ltimes\Lambda)$, (c) follows from (a).
  \end{proof}

\subsection{Deequivariantized Ext algebra}
\label{deeq ext}
Recall from~\cite[Proposition~8]{fgt} that the $\bG_\bO$-orbits in $\Gr\times{\vphantom{j^{X^2}}\smash{\overset{\circ}{\vphantom{\rule{0pt}{0.55em}}\smash{\mathbf V}}}}$
are numbered by bisignatures $(\blambda,\bmu)$ where both $\blambda$ and $\bmu$ have
length $N$. The IC-extension of the constant 1-dimensional local system on
such an orbit is denoted by $\IC_{(\blambda,\bmu)}$. In particular, $\IC_{(0^N,0^N)}$ is the
constant sheaf on $\Gr^0\times\bV_0$, to be denoted by $E_0$ for short.
Also recall from~\cite[Section~3]{fgt} that the left and right actions of the monoidal Satake
category $\Perv_{\bG_\bO}(\Gr)\cong\Rep(\GL_N)$ on $D_{!*\bG_\bO}(\Gr\times\oV)$ by convolutions
respect the perverse $t$-structure with the heart
$\Perv_{\bG_\bO}(\Gr\times\oV)\subset D_{!*\bG_\bO}(\Gr\times\oV)$.
As has been mentioned in~Section~\ref{renorm}, the right actions of $D_{\bG_\bO}(\Gr)$ on
$D_{?\bG_\bO}(\Gr\times\bV)$ commute with the equivalences $\varrho_\righ$, but
the left actions only commute with $\varrho_\righ$ up to cohomological shifts depending
on the connected components of $\Gr$.

We restrict the left and right actions of $D_{\bG_\bO}(\Gr)$ on
$D_{?\bG_\bO}(\Gr\times\bV)$ to the left and right actions of $\Perv_{\bG_\bO}(\Gr)\cong\Rep(\GL_N)$.
Thus we obtain the action of $\Rep(\sG)$ for $\sG=\GL_N\times\GL_N$. 
Let $D^{\deeq}_{?\bG_\bO}(\Gr\times\oV)$ denote the corresponding de\-equivariantized
category (see~\cite{ag} in the setting of abelian categories and~\cite{1-aff} in the setting
of dg-categories). Recall that to construct $D^{\deeq}_{?\bG_\bO}(\Gr\times\oV)$, first we consider
objects of the ind-completion of $D_{?\bG_\bO}(\Gr\times\bV)$ endowed with an action of $R_\sG$
where $R_\sG$ is the regular representation of $\sG$ considered as a ring-object in $\Rep(\sG)$.
Then we take compact objects of the resulting category. More explicitly,
an object of $D^{\deeq}_{?\bG_\bO}(\Gr\times\oV)$ is an object $\CF$ of $D_{?\bG_\bO}(\Gr\times\bV)$
together with a system of isomorphisms $\IC_\blambda*\CF*\IC_\bmu\iso V^*_\blambda\otimes V^*_\bmu\otimes\CF$
(recall that the geometric Satake equivalence takes $\IC_\blambda$ to $V^*_\blambda$) for any
signatures $\blambda,\bmu$, satisfying some natural compatibilities with respect to direct sums and
tensor products. By definition we have a natural forgetful functor
$D^{\deeq}_{?\bG_\bO}(\Gr\times\oV)\to D_{?\bG_\bO}(\Gr\times\bV)$. This functor admits a left adjoint that
sends an object $\CF$ to
$R_\sG*\CF=\bigoplus_{\blambda,\bmu}V_\blambda\otimes V_\bmu\otimes\IC_\blambda*\CF*\IC_\bmu$.

Thus, given $\CF_1,\CF_2\in D_{?\bG_\bO}(\Gr\times\bV)$, we denote the corresponding objects of the
deequivariantized category by the same symbols, and we have
\begin{equation}
  \label{def deeq}
  \RHom_{D^{\deeq}_{?\bG_\bO}(\Gr\times\ooV)}(\CF_1,\CF_2)=
\bigoplus_{\blambda,\bmu}\RHom_{D_{?\bG_\bO}(\Gr\times\ooV)}(\CF_1,\IC_\blambda*\CF_2*\IC_\bmu)\otimes
V_\blambda\otimes V_\bmu.
\end{equation}

\begin{lem}
  \label{formality}
  The dg-algebra $\RHom_{D^{\deeq}_{!*\bG_\bO}(\Gr\times\ooV)}(E_0,E_0)$ is formal, i.e.\ it is
  quasiisomorphic to the graded algebra $\Ext^\bullet_{D^{\deeq}_{!*\bG_\bO}(\Gr\times\ooV)}(E_0,E_0)$ with
  trivial differential.
\end{lem}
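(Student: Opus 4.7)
My plan is to deduce formality from a weight/purity argument of the type used by Bezrukavnikov--Finkelberg in \cite{bf} for the ordinary derived Satake situation, adapted to accommodate the mirabolic factor $\bV$. Concretely, I would lift all the relevant objects to mixed Hodge modules (or, equivalently, spread out over $\Spec\BZ$, reduce modulo a prime and work with mixed $\ell$-adic sheaves), show that every cohomology group of the Ext dg-algebra is pure of weight equal to its cohomological degree, and invoke the standard criterion that purity forces the higher $A_\infty$-operations to vanish.

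First I would set up mixed structures. The object $E_0=\IC_{(0^N,0^N)}$ and the spherical IC sheaves $\IC_\blambda,\IC_\bmu\in\Perv_{\bG_\bO}(\Gr)$ are of geometric origin, hence naturally mixed and pure of weight zero after the standard perverse normalization. Since the convolutions $\CP*\CF$ and $\CF*\CP$ of Section~\ref{conv constr} are perverse $t$-exact (as recalled at the end of that subsection) and commute with Verdier duality, the convolution $\IC_\blambda*E_0*\IC_\bmu$ is a semisimple pure perverse sheaf in $\Perv_{\bG_\bO}(\Gr\times\oV)$. Because every such sheaf is in fact pulled back with the appropriate shift from a finite-dimensional quotient of a finite-dimensional lattice in $\bV$, the mixed structure on each $\Hom$-computation reduces to a finite-dimensional geometric situation, and standard purity in the sense of Beilinson--Bernstein--Deligne applies.

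Next I would argue that each graded component is pure of matching weight. Namely, for every $k,\blambda,\bmu$, the group $\RHom^k_{D_{!*\bG_\bO}(\Gr\times\oV)}(E_0,\IC_\blambda*E_0*\IC_\bmu)$ is computed as the $k$-th cohomology of a $\bG_\bO$-equivariant pure complex on a finite-dimensional orbit closure, and hence is pure of weight $k$. Summing over $\blambda,\bmu$ and twisting by the pure modules $V_\blambda\otimes V_\bmu$ via the formula \eqref{def deeq}, the deequivariantized $\Ext$-algebra $E^\bullet:=\Ext^\bullet_{D^{\deeq}_{!*\bG_\bO}(\Gr\times\ooV)}(E_0,E_0)$ is pure, i.e.\ $E^k$ is of weight~$k$ for every~$k$.

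Finally, I would apply the formality principle: a dg-algebra $A^\bullet$ underlying a mixed object whose cohomology is pure in the diagonal weight $w=\deg$ is quasi-isomorphic to its cohomology, because each higher $A_\infty$-operation $m_n$ ($n\ge 3$) would have to shift weight by $2-n\ne 0$ while preserving the matching of weight and degree, so $m_n=0$. The main obstacle I anticipate is technical rather than conceptual: one must carefully justify that the ind-pro/semi-infinite nature of $\Gr\times\bV$, and in particular the conventions from Section~\ref{conv constr} distinguishing $D_{!\bG_\bO}$, $D_{*\bG_\bO}$ and $D_{!*\bG_\bO}$, are compatible with a well-defined mixed structure on the deequivariantized Ext-algebra. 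This should be handled exactly as in \cite{bf}, using that any two objects are supported on a fixed pair $(\overline{\Gr^\blambda},t^m\bV_0/t^n\bV_0)$ on which all constructions reduce to the standard mixed setting, together with the fact that the left and right $\Perv_{\bG_\bO}(\Gr)$-actions preserve purity.
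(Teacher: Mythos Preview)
Your overall strategy---establish purity of each $\Ext^k$ and then invoke the standard ``purity implies formality'' argument from \cite{bf}---is exactly the one the paper follows, and your final step (the $A_\infty$/weight argument, equivalently \cite[Lemma~15, Proposition~5]{bf}) is fine.

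The gap is in your purity step. You assert that $\RHom^k_{D_{!*\bG_\bO}(\Gr\times\oV)}(E_0,\IC_\blambda*E_0*\IC_\bmu)$ is pure of weight $k$ because it is ``the $k$-th cohomology of a $\bG_\bO$-equivariant pure complex on a finite-dimensional orbit closure.'' But equivariant cohomology of a pure complex is \emph{not} automatically pure; the general weight yoga only gives $\on{wt}\,\Ext^k\geq k$. Concretely, $\RHom(E_0,-)$ is $H^\bullet_{\bG_\bO}(\bV_0,\bar\imath_0^!(-))$ for the closed embedding $\bar\imath_0\colon\Gr^0\times\bV_0\hookrightarrow\Gr\times\bV$, and neither $\bar\imath_0^!$ nor equivariant global sections is purity-preserving in general. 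So you have not yet shown the inequality $\on{wt}\,\Ext^k\leq k$, which is precisely what is needed.

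The paper supplies this missing input by a geometric argument you did not use: the loop-rotation $\BG_m$-action on $\Gr\times\bV$ has $\Gr^0\times\BA^1$ (constant Laurent series in $\bV$) as a fixed-point component whose attracting set is $\Gr^0\times\bV_0$, so $H^\bullet_{\bG_\bO}(\bV_0,\bar\imath_0^!\IC_{(\blambda,\bmu)})$ is identified with a hyperbolic restriction in the sense of Braden~\cite{bra,dg}. Hyperbolic restriction preserves purity, which yields $\on{wt}\,\Ext^k=k$ and closes the gap. Once you add this one ingredient, your proof becomes essentially the paper's.
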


\begin{proof}
  We change the setting to the base field $\BF_q$. Then a $\bG_\bO$-equivariant
  irreducible perverse sheaf $\IC_{(\blambda,\bmu)}$ on $\Gr\times\bV$ carries a natural Weil
  structure, see~\cite[Section~2.6]{fgt}, and
  $\Ext^\bullet_{D_{!*\bG_\bO}(\Gr\times\ooV)}(E_0,\IC_{(\blambda,\bmu)})$
  is pure. Indeed, if $\bar\imath_0$ denotes the closed embedding
  $\bV_0\cong\Gr^0\times\bV_0\hookrightarrow\Gr\times\bV$, then
  $\Ext^\bullet_{D_{!*\bG_\bO}(\Gr\times\ooV)}(E_0,\IC_{(\blambda,\bmu)})=
  H^\bullet_{\bG_\bO}(\bV_0,\bar\imath_0^!\IC_{(\blambda,\bmu)})$.
  Let us consider the loop rotation action $\BG_m\curvearrowright(\Gr\times\bV)$.
  We have an embedding $\BA^1\hookrightarrow\bV$ (constant Laurent series), and
  $\Gr^0\times\BA^1$ is a fixed point component of the $\BG_m$-action, so that
  $H^\bullet_{\bG_\bO}(\bV_0,\bar\imath_0^!\IC_{(\blambda,\bmu)})$ is the hyperbolic restriction to this
  component (more precisely, the hyperbolic restriction is a geometrically
  constant complex on $\BA^1$ with the above stalks)~\cite{bra,dg}. But the
  hyperbolic restriction preserves purity.

  Now given the purity of $\Ext^\bullet_{D_{!*\bG_\bO}(\Gr\times\ooV)}(E_0,\IC_{(\blambda,\bmu)})$, the 
  desired result follows by an application of~\cite[Lemma~15]{bf}
  (and then of~\cite[Proposition~5]{bf}).
  \end{proof}

We denote the dg-algebra
$\Ext^\bullet_{D^{\deeq}_{!*\bG_\bO}(\Gr\times\ooV)}(E_0,E_0)$ (with trivial differential)
by $\fE^\bullet$. Since it is an Ext-algebra in the deequivariantized category between objects
induced from the original category, it is automatically
equipped with an action of $\GL_N\times\GL_N=\GL(V_1)\times\GL(V_2)=G_{\bar0}$, and we can
consider the corresponding triangulated category $D^{G_{\bar0}}_\perf(\fE^\bullet)$.

\begin{lem}
  \label{purity}
There is a canonical equivalence $D^{G_{\bar0}}_\perf(\fE^\bullet)\iso D_{!*\bG_\bO}(\Gr\times\oV)$.
\end{lem}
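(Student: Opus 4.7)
My plan is to realize the equivalence by viewing $E_0$ as a compact generator, after passing to the deequivariantization, and identifying its endomorphism dg-algebra with $\fE^\bullet$. Concretely, I would define a functor
\[ \Psi\colon D_{!*\bG_\bO}(\Gr\times\oV)\longrightarrow D^{G_{\bar0}}_\perf(\fE^\bullet), \qquad \CF\longmapsto \RHom^{\deeq}(E_0,\CF). \]
Thanks to the formality of Lemma~\ref{formality}, $\RHom^{\deeq}(E_0,\CF)$ is canonically a strict dg-module over $\fE^\bullet$ rather than merely an $A_\infty$-module; moreover the $\Rep(\GL_N)\boxtimes\Rep(\GL_N)\cong\Rep(G_{\bar0})$-action built into the deequivariantization endows it with a natural $G_{\bar0}$-equivariant structure. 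This is the Morita-theoretic template used in~\cite{bf} for the ordinary derived Satake, adapted to the mirabolic setting.

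The second step is to evaluate $\Psi$ on the class of convolutions $\IC_\blambda*E_0*\IC_\bmu$. Using formula~(\ref{def deeq}), adjointness of left/right convolution, the Satake identification $\IC_\blambda\leftrightarrow V_\blambda^*$, and the computation $\Psi(E_0)=\fE^\bullet$, one obtains natural isomorphisms $\Psi(\IC_\blambda*E_0*\IC_\bmu)\cong\fE^\bullet\otimes V_\blambda\otimes V_\bmu$ of $G_{\bar0}$-equivariant dg-$\fE^\bullet$-modules. Morphisms between such free modules on either side are computed by the same formula (\ref{def deeq}), so $\Psi$ is automatically fully faithful on the full triangulated subcategory generated by the family $\{\IC_\blambda*E_0*\IC_\bmu\}$.

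The main obstacle is essential surjectivity, which reduces to proving that this family classically generates $D_{!*\bG_\bO}(\Gr\times\oV)$. Recall from Section~\ref{conv constr} (citing \cite[Section~3.9]{fgt}) that left and right convolution with $\IC_?\in\Perv_{\bG_\bO}(\Gr)$ is perverse $t$-exact on $D_{!*\bG_\bO}(\Gr\times\oV)$, so each $\IC_\blambda*E_0*\IC_\bmu$ is perverse. Combined with the purity input underlying Lemma~\ref{formality} and the decomposition theorem applied to the proper convolution map, this exhibits $\IC_\blambda*E_0*\IC_\bmu$ as a finite direct sum of IC-sheaves $\IC_{(\alpha,\beta)}$. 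What remains is the combinatorial assertion that for every bisignature $(\alpha,\beta)$ some convolution $\IC_\blambda*E_0*\IC_\bmu$ contains $\IC_{(\alpha,\beta)}$ as a direct summand (so that a downward induction on Shoji's order of \cite[Proposition~12]{fgt} extracts each simple perverse sheaf successively). This is the principal new geometric ingredient; it will be settled via the orbit analysis developed in~\cite{fgt}, where the relevant multiplicities were already computed in terms of Kostka-Shoji polynomials.

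Granting the generation statement, every simple perverse object of $\Perv_{\bG_\bO}(\Gr\times\oV)$, and hence every object of $D_{!*\bG_\bO}(\Gr\times\oV)$, lies in the triangulated subcategory generated by the convolutions; since on the other side the free modules $\fE^\bullet\otimes V_\blambda\otimes V_\bmu$ already generate $D^{G_{\bar0}}_\perf(\fE^\bullet)$, the fully faithful functor $\Psi$ is essentially surjective, and therefore an equivalence. This yields the lemma.
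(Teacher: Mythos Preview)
Your proposal is correct and follows the same approach as the paper's own proof, which is simply: invoke the functor construction of~\cite[Section~6.5, Propositions~5,6]{bf}, note that $E_0$ generates $D_{!*\bG_\bO}(\Gr\times\oV)$ under the left and right Satake convolutions, and apply the formality of Lemma~\ref{formality}. Your write-up is an explicit unpacking of that machinery.

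One comment: you overstate the difficulty of the generation step. It does not require Kostka--Shoji multiplicities or an induction on Shoji's order. The convolution $\IC_\blambda*E_0*\IC_\bmu$ is a perverse sheaf (by the exactness you cite) whose support is exactly the orbit closure $\overline{\Omega}_{(\blambda,\bmu)}$; a direct check on the convolution diagram shows the map is an isomorphism over the open orbit, so $\IC_{(\blambda,\bmu)}$ occurs as a summand (indeed, one has $\IC_\blambda*E_0*\IC_\bmu=\IC_{(\blambda,\bmu)}$ on the nose, as is used later in the proof of Corollary~\ref{exact spherical}). Thus every simple object is already \emph{equal} to one of your convolutions, and generation is immediate. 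The Kostka--Shoji polynomials encode the finer information of stalks, which is irrelevant here.
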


\begin{proof}
  The desired functor is constructed as in~\cite[Section~6.5,~Propositions~5,6]{bf}.
  Since $E_0$ generates the triangulated category $D_{!*\bG_\bO}(\Gr\times\oV)$ (with respect
  to the left and right actions of the Satake category), the claim follows
  from~Lemma~\ref{formality}.
\end{proof}

We also consider the left and right actions of the monoidal Satake category
$\Perv_{\bG_\bO}(\Gr)\cong\Rep(\GL_N)$ on $D_{\bG_\bO}(\Gr)$ by convolutions.
Let $D^{\deeq}_{\bG_\bO}(\Gr)$ denote the corresponding deequivariantized
category. Then the dg-algebra $\RHom_{D^{\deeq}_{\bG_\bO}(\Gr)}(\IC_0,\IC_0)$ is formal, i.e.\ it is
quasiisomorphic to the graded algebra $\Ext^\bullet_{D^{\deeq}_{\bG_\bO}(\Gr)}(\IC_0,\IC_0)$ with
trivial differential. Furthermore, it follows from~\cite[Theorem~5]{bf} that there is a
natural isomorphism $\Ext^\bullet_{D^{\deeq}_{\bG_\bO}(\Gr)}(\IC_0,\IC_0)\cong\fA^\bullet$
(notation of~Section~\ref{loc coh}).

\subsection{Localization, constructible}
\label{localization}
We have an automorphism
$\alpha\colon \Gr\times\oV\iso\Gr\times\oV,\ (L,v)\mapsto (L,tv)$.
We have a morphism of endofunctors
$\alpha^*\to\Id\colon D_{*\bG_\bO}(\Gr\times\bV)\to D_{*\bG_\bO}(\Gr\times\bV)$
constructed as follows. We consider a family of automorphisms
$\boldsymbol{\alpha}\colon \BA^1\times\Gr\times\oV\hookrightarrow\Gr\times\oV,\
(c,L,v)\mapsto (L,(c+t)v)$,
so that $\boldsymbol{\alpha}_0=\alpha$. Note that $\boldsymbol{\alpha}_c^*\cong\Id$
on $D_{*\bG_\bO}(\Gr\times\bV)$ for  $c\ne0$.
Now the desired morphism $\alpha^*\to\Id$ is just the cospecialization\footnote{terminology
of~\cite[6.2.7]{sch}} morphism from the stalk at $0\in\BA^1$ to the nearby stalk.
For $\CF_1,\CF_2\in D_{*\bG_\bO}(\Gr\times\oV)$ we have an inductive system
\begin{equation*}\ldots\to\RHom_{D_{*\bG_\bO}(\Gr\times\ooV)}((\alpha^{n-1})^*\CF_1,\CF_2)\to
  \RHom_{D_{*\bG_\bO}(\Gr\times\ooV)}((\alpha^n)^*\CF_1,\CF_2)\to\ldots
\end{equation*}
Note that it stabilizes since by definition of $D_{*\bG_\bO}(\Gr\times\oV)$, the restriction
of $(\alpha^n)^*\CF_1$ to the support of $\CF_2$ becomes the pullback of an
appropriate sheaf in $D_{\bG_\bO}(\Gr)$ for $n\gg0$.

We define the localized category $D^\loc_{*\bG_\bO}(\Gr\times\oV)$ as the category with the same
objects as $D_{*\bG_\bO}(\Gr\times\oV)$, and with morphisms
\begin{equation*}\RHom_{D^\loc_{*\bG_\bO}(\Gr\times\ooV)}(\CF_1,\CF_2):=
  \varinjlim\RHom_{D_{*\bG_\bO}(\Gr\times\ooV)}((\alpha^n)^*\CF_1,\CF_2).
\end{equation*}
The tautological functor $D_{*\bG_\bO}(\Gr\times\oV)\to D^\loc_{*\bG_\bO}(\Gr\times\oV)$
is denoted $u_0^*$ (``restriction to $v\thickapprox0$'').

\bigskip

We also have the Verdier dual (to the above $\alpha^*\to\Id$) morphism of endofunctors
$\Id\to\alpha^!
\colon D_{!\bG_\bO}(\Gr\times\bV)\to D_{!\bG_\bO}(\Gr\times\bV)$.
It gives rise to an inductive system for
$\CF_1,\CF_2\in D_{!\bG_\bO}(\Gr\times\oV)$
\begin{multline*}\ldots\to\RHom_{D_{!\bG_\bO}(\Gr\times\ooV)}(\CF_1,(\alpha^{n-1})^!\CF_2)\\
\to\RHom_{D_{!\bG_\bO}(\Gr\times\ooV)}(\CF_1,(\alpha^n)^!\CF_2)\to\ldots
\end{multline*}
stabilizing for the reasons similar to above.
We define the localized category $D^{\loc}_{!\bG_\bO}(\Gr\times\oV)$ as the category with the same
objects as $D_{!\bG_\bO}(\Gr\times\oV)$, and with morphisms
\begin{equation*}\RHom_{D^{\loc}_{!\bG_\bO}(\Gr\times\ooV)}(\CF_1,\CF_2):=
  \varinjlim\RHom_{D_{!\bG_\bO}(\Gr\times\ooV)}(\CF_1,(\alpha^n)^!\CF_2).
\end{equation*}
The tautological functor $D_{!\bG_\bO}(\Gr\times\oV)\to D^{\loc}_{!\bG_\bO}(\Gr\times\oV)$
is denoted $u_0^!$ (``corestriction to $v\thickapprox0$'').

We also have the projection $\pr\colon\Gr\times\bV_0\to\Gr$, and the corresponding
pullbacks $\pr^*\colon D_{\bG_\bO}(\Gr)\to D_{*\bG_\bO}(\Gr\times\oV),\
\pr^!\colon D_{\bG_\bO}(\Gr)\to D_{!\bG_\bO}(\Gr\times\oV)$ (with the essential images
supported on $\Gr\times\bV_0\subset\Gr\times\bV$).

\begin{lem}
  \label{restriction to zero}
  The compositions
  \begin{equation*}u_0^*\circ\pr^*\colon D_{\bG_\bO}(\Gr)\to D^\loc_{*\bG_\bO}(\Gr\times\oV),\
  u_0^!\circ\pr^!\colon D_{\bG_\bO}(\Gr)\to D^{\loc}_{!\bG_\bO}(\Gr\times\oV)\end{equation*}
  are equivalences of categories sending $\IC_0$ to $\varrho_\righ E_0,\ \varrho_\righ^{-1}E_0$
  respectively.
\end{lem}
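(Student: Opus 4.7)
My plan is to exhibit an explicit quasi-inverse functor $\iota_0^*$: for $\CF\in D_{*\bG_\bO}(\Gr\times\oV)$ represented by $\CF_n$ on $\Gr\times(t^m\bV_0/t^n\bV_0)$, set $\iota_0^*\CF:=\iota_0^*\CF_n\in D_{\bG_\bO}(\Gr)$, where $\iota_0\colon\Gr\hookrightarrow\Gr\times(t^m\bV_0/t^n\bV_0)$ is the zero section (manifestly independent of the level $n$). I focus on the $*$-version; the $!$-version proceeds by the same argument (equivalently, by Verdier duality), the $[2N(n''-n')]$-shifts built into $D^\loc_{!\bG_\bO}$ accounting for the difference between $\varrho_\righ E_0$ and $\varrho_\righ^{-1}E_0$.

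Full faithfulness of $u_0^*\circ\pr^*$ reduces quickly: $\pr^*\CP$ is pulled back along the first projection and hence tautologically $\alpha$-invariant, so the colimit defining $\RHom_{D^\loc_{*\bG_\bO}}(\pr^*\CP_1,\pr^*\CP_2)$ is constant, equal to $\RHom_{D_{*\bG_\bO}}(\pr^*\CP_1,\pr^*\CP_2)$; on each finite-dimensional approximation, this reduces to contractibility of the affine fiber $\bV_0/t^n\bV_0\cong\BA^{Nn}$, giving $\RHom_{D_{\bG_\bO}(\Gr)}(\CP_1,\CP_2)$.

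For essential surjectivity I establish $u_0^*\pr^*\iota_0^*\CF\cong u_0^*\CF$ in $D^\loc_{*\bG_\bO}$ via Yoneda (the other composition $\iota_0^*\circ u_0^*\pr^*=\Id$ is immediate from $\pr\circ\iota_0=\Id_\Gr$). The key geometric input is the stalk formula $((\alpha^{n'})^*\CF)_{(L,v)}=\CF_{(L,t^{n'}v)}$: for $v$ in a fixed lattice $t^{m'}\bV_0$ and $n'\geq n-m'$, the element $t^{n'}v\in t^n\bV_0$ vanishes in $t^m\bV_0/t^n\bV_0$, so $(\alpha^{n'})^*\CF$ coincides with $\pr^*\iota_0^*\CF$ on $\Gr\times t^{m'}\bV_0$. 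For any test object $\CF'$ supported on such a lattice and for $n'\gg 0$, this produces via the closed-inclusion adjunction an identification $\RHom_{D_{*\bG_\bO}}((\alpha^{n'})^*\CF,\CF')=\RHom_{D_{*\bG_\bO}}(\pr^*\iota_0^*\CF,\CF')$; passing to the colimit and using that $\pr^*\iota_0^*\CF$ is $\alpha$-invariant (so its inductive system is constant) yields $\RHom_{D^\loc_{*\bG_\bO}}(u_0^*\CF,\CF')=\RHom_{D^\loc_{*\bG_\bO}}(u_0^*\pr^*\iota_0^*\CF,\CF')$.

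For the identification of images, $\IC_0=\ul\BC_{\Gr^0}$ is the skyscraper at the point $\Gr^0=\{\bV_0\}\in\Gr$, so $\pr^*\IC_0$ is represented at level $n$ by $\ul\BC_{\Gr^0\times(\bV_0/t^n\bV_0)}$ with $*$-transitions $p_{n''/n'}^*$; meanwhile $E_0\in D_{!*\bG_\bO}$ is the IC sheaf of $\Gr^0\times\bV_0$, represented at level $n$ by $\ul\BC[Nn]$ with $!*$-transitions $p_{n''/n'}^*[N(n''-n')]$. Applying $\varrho_\righ$ to $E_0$ removes the $[Nn]$-shift level-by-level and converts $!*$-transitions into $*$-ones, yielding exactly $\pr^*\IC_0$; the $!$-side is parallel. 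The main technical obstacle I anticipate is the careful verification that the local stalk agreement $(\alpha^{n'})^*\CF|_{\Gr\times t^{m'}\bV_0}=\pr^*\iota_0^*\CF|_{\Gr\times t^{m'}\bV_0}$ is genuinely compatible with the cospecialization structure maps of the inductive system defining the localization, using the $\bG_\bO$-equivariance and the concrete description of the family $\boldsymbol{\alpha}$; this is essentially a bookkeeping exercise but needs to be done once carefully before the Yoneda comparison above is rigorous.
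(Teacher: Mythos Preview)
Your proposal is correct and spells out in detail what the paper regards as evident---the paper's own proof reads simply ``Clear.'' Your explicit quasi-inverse $\iota_0^*$ is precisely the ``appropriate sheaf in $D_{\bG_\bO}(\Gr)$'' alluded to just before the lemma, where the authors already note that $(\alpha^n)^*\CF_1$ restricted to the support of $\CF_2$ stabilizes to such a pullback; your argument makes this stabilization explicit and packages it as an equivalence via Yoneda, and your computation of $\varrho_\righ^{\pm1}E_0$ versus $\pr^*\IC_0$, $\pr^!\IC_0$ level-by-level is the intended verification.
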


\begin{proof}
  Clear.
\end{proof}

The localizations of the deequivariantized categories
$D^{\deeq}_{*\bG_\bO}(\Gr\times\oV)$, $D^{\deeq}_{!\bG_\bO}(\Gr\times\oV)$ will be denoted
  $D^{\loc,\deeq}_{*\bG_\bO}(\Gr\times\oV)$ and
  $D^{\loc,\deeq}_{!\bG_\bO}(\Gr\times\oV)$ respectively.

  Recall the dg-algebras $\fA^\bullet,\fB^\bullet$ introduced in Section~\ref{loc coh}.

\begin{cor}
  \label{localized ext}
  There are canonical isomorphisms

  \textup{(a)} $\Ext^\bullet_{D^{\loc,\deeq}_{!\bG_\bO}(\Gr\times\ooV)}(\varrho_\righ^{-1}E_0,
  \varrho_{\righ}^{-1}E_0)\cong\fA^\bullet,$

  \textup{(b)} $\Ext^\bullet_{D^{\loc,\deeq}_{*\bG_\bO}(\Gr\times\ooV)}(\varrho_\righ E_0,
  \varrho_\righ E_0)\cong\fB^\bullet$.\hfill{$\Box$}
\end{cor}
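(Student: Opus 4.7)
The plan is to deduce both isomorphisms from \lemref{restriction to zero} combined with the known computation $\Ext^\bullet_{D^{\deeq}_{\bG_\bO}(\Gr)}(\IC_0,\IC_0)\cong\fA^\bullet$ recalled at the end of \secref{deeq ext} (a reformulation of the derived Satake theorem of~\cite{bf}). Thus the content of the corollary is that localization and deequivariantization interact well, and that the localization equivalence of \lemref{restriction to zero} intertwines the relevant Satake actions (with the appropriate twist by connected components recorded in the definition of $\varrho_\righ$).

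First I would check that the functors $u_0^*\circ\pr^*$ and $u_0^!\circ\pr^!$ of \lemref{restriction to zero} intertwine the left and right convolution actions of the Satake category $\Perv_{\bG_\bO}(\Gr)$ on both sides. The left action is manifestly preserved by $\pr^*$ and $\pr^!$ because the left convolution only affects the $\Gr$-factor and commutes with pullback along $\pr$; it also commutes with $u_0^*, u_0^!$ because $\alpha$ acts only on the $\oV$-factor. For the right action one has to be slightly more careful: the right convolution with $\IC_\bnu\in\Perv_{\bG_\bO}(\Gr^{(k)})$ twists $\bV$ by multiplication by elements of $\bG_\bF$ of valuation $k$, which is exactly compensated by the renormalization $\varrho_\righ$ (whose definition includes the shift $[-k]$ on the component $\Gr^{(k)}$). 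This is precisely why $\varrho_\righ^{\pm1}E_0$, rather than $E_0$ itself, is the correct object to compare with $\IC_0$.

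Next, I pass to deequivariantized categories. Since localization $D_{?\bG_\bO}(\Gr\times\oV)\to D^\loc_{?\bG_\bO}(\Gr\times\oV)$ is defined by inverting an endomorphism of $\alpha^*$ (resp.\ $\alpha^!$) that is $\Perv_{\bG_\bO}(\Gr)$-linear on both sides, the left/right Satake actions descend to the localized categories, and deequivariantization commutes with localization. Combining this with the Satake-equivariant equivalence from \lemref{restriction to zero}, I get equivalences
\[D^{\deeq}_{\bG_\bO}(\Gr)\iso D^{\loc,\deeq}_{*\bG_\bO}(\Gr\times\oV),\qquad
D^{\deeq}_{\bG_\bO}(\Gr)\iso D^{\loc,\deeq}_{!\bG_\bO}(\Gr\times\oV),\]
sending $\IC_0$ to $\varrho_\righ E_0$ and $\varrho_\righ^{-1}E_0$ respectively.

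Finally, applying $\Ext^\bullet$ transports the computation $\Ext^\bullet_{D^{\deeq}_{\bG_\bO}(\Gr)}(\IC_0,\IC_0)\cong\fA^\bullet$ to the right-hand sides of (a) and (b). For (a) one obtains $\fA^\bullet$ directly; for (b) one notes that under the symmetric roles of $V_1,V_2$ (i.e.\ swapping the $\srel**$/$\srel!\oast$ sides, which exchanges $\fA^\bullet\leftrightarrow\fB^\bullet$), the same argument gives $\fB^\bullet$. The only potential obstacle is the bookkeeping for the shifts by connected components of $\Gr$: one must verify that the renormalization built into $\varrho_\righ$ exactly matches the shifts appearing when one passes the Satake action across $\pr^*$ and $u_0^*$ (resp.\ $\pr^!$ and $u_0^!$). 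This is a direct unwinding of the definitions in \secref{renorm} and \secref{localization}, and it is the step that fixes which of $\fA^\bullet$ or $\fB^\bullet$ appears on which side.
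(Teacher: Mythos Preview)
Your proposal is correct and matches the paper's approach: the corollary is marked with $\Box$ and follows immediately from \lemref{restriction to zero} together with the computation $\Ext^\bullet_{D^{\deeq}_{\bG_\bO}(\Gr)}(\IC_0,\IC_0)\cong\fA^\bullet$ recalled at the end of \secref{deeq ext}. One small slip in your bookkeeping: the shift $[-k]$ on $\Gr^{(k)}$ is the definition of $\varrho_\lef$, not $\varrho_\righ$ (see \secref{renorm}); the relevant mechanism is rather that $u_0^*\circ\pr^*$ and $u_0^!\circ\pr^!$ intertwine the right Satake action only up to component-dependent shifts, and it is this regrading that distinguishes $\fA^\bullet$ from $\fB^\bullet$.
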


Note that the dg-algebra $\RHom_{D^{\deeq}_{*\bG_\bO}(\Gr\times\ooV)}(\varrho_\righ E_0,\varrho_\righ E_0)$
(resp.\ $\RHom_{D^{\deeq}_{!\bG_\bO}(\Gr\times\ooV)}(\varrho_\righ^{-1}E_0,\varrho_\righ^{-1}E_0)$)
is also formal, i.e.\ it is quasiisomorphic to
$\Ext^\bullet_{D^{\deeq}_{*\bG_\bO}(\Gr\times\ooV)}(\varrho_\righ E_0,\varrho_\righ E_0)$
(resp.\ $\Ext^\bullet_{D^{\deeq}_{!\bG_\bO}(\Gr\times\ooV)}(\varrho_\righ^{-1}E_0,\varrho_\righ^{-1}E_0)$)
with trivial differential. If we disregard their gradings, they are both isomorphic to
$\Ext^\bullet_{D^{\deeq}_{!*\bG_\bO}(\Gr\times\ooV)}(E_0,E_0)$.
We will denote the algebra $\Ext^\bullet_{D^{\deeq}_{!*\bG_\bO}(\Gr\times\ooV)}(E_0,E_0)$
with grading forgotten by $\Ext_{D^{\deeq}_{!*\bG_\bO}(\Gr\times\ooV)}(E_0,E_0)$; the same
applies to $\fA^\bullet,\fB^\bullet,\fE^\bullet,\fG^\bullet$.
Thus we have equalities
$\Ext_{D^{\deeq}_{*\bG_\bO}(\Gr\times\ooV)}(\varrho_\righ E_0,\varrho_\righ E_0)=
\Ext_{D^{\deeq}_{!*\bG_\bO}(\Gr\times\ooV)}(E_0,E_0)=
\Ext_{D^{\deeq}_{!\bG_\bO}(\Gr\times\ooV)}(\varrho_\righ^{-1}E_0,\varrho_\righ^{-1}E_0)$.

\begin{lem}
  \label{injective}
  The natural morphism
  \begin{equation*}\Ext^\bullet_{D^{\deeq}_{!\bG_\bO}(\Gr\times\ooV)}(\varrho_\righ^{-1}E_0,\varrho_\righ^{-1}E_0)
    \to\Ext^\bullet_{D^{\loc,\deeq}_{!\bG_\bO}(\Gr\times\ooV)}(\varrho_\righ^{-1}E_0,\varrho_\righ^{-1}E_0)
  \end{equation*}
  is injective.
\end{lem}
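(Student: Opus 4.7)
The plan is to show injectivity by identifying the right-hand side as an Ore localization of the left-hand side and then verifying a non-zero-divisor property. First, unwinding the definition in~\secref{localization}, the target is the filtered colimit
\[\Ext^\bullet_{D^{\loc,\deeq}_{!\bG_\bO}(\Gr\times\ooV)}(\varrho_\righ^{-1}E_0,\varrho_\righ^{-1}E_0) = \varinjlim_n \Ext^\bullet_{D^{\deeq}_{!\bG_\bO}(\Gr\times\ooV)}(\varrho_\righ^{-1}E_0, (\alpha^n)^!\varrho_\righ^{-1}E_0[2nN]),\]
whose transition maps are given by post-composition with the cospecialization $\eta\colon\varrho_\righ^{-1}E_0\to\alpha^!\varrho_\righ^{-1}E_0[2N]$. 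Since $\alpha^!$ is an autoequivalence of the deequivariantized category, applying $(\alpha^n)^!$ identifies each term of the colimit with the source Ext algebra $\Ext^\bullet_{D^{\deeq}_{!\bG_\bO}(\Gr\times\ooV)}(\varrho_\righ^{-1}E_0,\varrho_\righ^{-1}E_0)$ itself, exhibiting the colimit as an Ore localization at the composition-class $[\eta]$.

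Next, I would reduce injectivity to showing that $[\eta]$ is a non-zero-divisor under composition. For this, \lemref{restriction to zero} provides a canonical ring-theoretic section: since $u_0^!\circ\pr^!$ is an equivalence, composition with $\pr^!$ realizes $\fA^\bullet$ (cf.~\corref{localized ext}) as a split subalgebra of the source Ext algebra. Under this identification, $[\eta]$ is sent to $\det\in\BC[\Isom(V_1,V_2)]\subset\fA^\bullet$, which is manifestly a non-zero-divisor in the graded-commutative algebra $\fA^\bullet = \BC[\Isom(V_1,V_2)]\otimes\Sym(\Hom(V_1,V_2)[-2])$.

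To extend the non-zero-divisor property from the split summand $\pr^!(\fA^\bullet)$ to the full source Ext algebra, I would appeal to the formality in~\lemref{formality} (in the $!$-variant noted just before~\lemref{injective}): the dg-algebra $\RHom_{D^{\deeq}_{!\bG_\bO}(\Gr\times\ooV)}(\varrho_\righ^{-1}E_0,\varrho_\righ^{-1}E_0)$ is formal, with underlying graded algebra pure in the Frobenius-weight sense inherited from the Weil structure. Purity then rigidifies the bimodule structure of the source Ext algebra over its central subring $\pr^!(\fA^\bullet)$: a hypothetical nonzero $f$ with $[\eta]\cdot f=0$ would have to violate weight-equivariance, since multiplication by $[\eta]$ is already injective on the pure graded pieces of the split summand.

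The main obstacle will be this last weight-based transfer---extending the non-zero-divisor property from $\pr^!(\fA^\bullet)$ to the entire algebra without prematurely invoking~\thmref{main spherical mirabolic}. The cleanest route I envisage is a loop-rotation-equivariant strengthening of the argument of~\lemref{formality}: the hyperbolic restriction to the fixed-point component $\Gr^0\times\BA^1$ commutes with the left-right $G_{\bar 0}$-action and preserves the relevant Hom spaces, reducing the non-zero-divisor claim to an explicit computation at the loop-rotation fixed-point locus.
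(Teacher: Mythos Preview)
Your reduction of the question to the non-zero-divisor property of the cospecialization class $[\eta]$ is fine, but the ``weight-based transfer'' step has a genuine gap. Knowing that multiplication by $[\eta]$ is injective on a split subalgebra $\pr^!(\fA^\bullet)$ tells you nothing about injectivity on the rest of $\fE^\bullet$: there is no module-theoretic mechanism by which injectivity on a direct summand propagates to the ambient algebra, and the vague appeal to ``weight-equivariance'' does not supply one. Your suggested patch via hyperbolic restriction to the loop-rotation fixed locus is also incomplete, since you do not explain how it controls the cokernel of $[\eta]\cdot(-)$ as opposed to just the Ext groups themselves. Worse, the split-subalgebra picture implicitly assumes you already understand $\fE^\bullet$ well enough to decompose it --- but this lemma is precisely what is used to establish that $\fE$ is an integral domain (\corref{comm-int}) and then to identify it (\lemref{phi isom}), so you risk circularity.

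The paper's argument avoids all of this by computing the \emph{cone} of a single transition map directly. Using $\alpha^!\varrho_\righ^{-1}\IC_{(-1^N,1^N)}[N]\simeq\varrho_\righ^{-1}E_0$, the transition map becomes, for each $(\blambda,\bmu)$, the map
\[\vartheta\colon\RHom(\varrho_\righ^{-1}E_0,\varrho_\righ^{-1}\IC_{(\blambda,\bmu)})\to
\RHom(\varrho_\righ^{-1}\IC_{(-1^N,1^N)}[-N],\varrho_\righ^{-1}\IC_{(\blambda,\bmu)}),\]
whose cone is $H^\bullet_{\bG_\bO}(\Omega_{(0^N,0^N)},\imath_0^!\varrho_\righ^{-1}\IC_{(\blambda,\bmu)})$, i.e.\ equivariant cohomology of the corestriction to the open orbit $\Omega_{(0^N,0^N)}$. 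Pointwise purity of $\IC_{(\blambda,\bmu)}$ (established in~\cite[\S3]{fgt}) makes this cone pure of the same weight as source and target, so the long exact sequence degenerates and $\vartheta$ is injective. This is both simpler and logically prior to any structural knowledge of $\fE^\bullet$.
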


\begin{proof}
  We have $\alpha^!\varrho_\righ^{-1}\IC_{(-1^N,1^N)}[N]\simeq\varrho_\righ^{-1}E_0$,
  and we need to check that for any $\blambda,\bmu$
  the natural morphism
  \begin{multline}
    \label{inj}
    \vartheta\colon\RHom_{D_{!\bG_\bO}(\Gr\times\ooV)}(\varrho_\righ^{-1}E_0,
    \varrho_\righ^{-1}\IC_{(\blambda,\bmu)})\\
    \to\RHom_{D_{!\bG_\bO}(\Gr\times\ooV)}(\varrho_\righ^{-1}\IC_{(-1^N,1^N)}[-N],
    \varrho_\righ^{-1}\IC_{(\blambda,\bmu)})
  \end{multline}
  is injective.
  Indeed, since $\alpha^!$ is a Hecke transformation endofunctor (namely, it is given by
  convolution $\IC_{(1^N)}*?*\IC_{(-1)^N}$ with invertible objects of the Satake category),
  the morphism $\vartheta$ is an endomorphism of the identity endofunctor of the deequivariantized
  category $D^{\deeq}_{!\bG_\bO}(\Gr\times\ooV)$. In other words, $\vartheta$ is an element of the
  center of this category. But the natural morphism from the center to its localization is
  injective iff the multiplication by $\vartheta$ is injective.
  
  We change the setting to the base field $\BF_q$ as in the proof of~Lemma~\ref{formality}.
  Then all the IC sheaves in question carry a natural Weil structure, and it was proved
  in {\em loc.\ cit.} that both the LHS and the RHS of~(\ref{inj}) are pure; it is immediate
  to see that they are pure of the same weight $w$. The cone of $\vartheta$ is
  $H^\bullet_{\bG_\bO}(\Omega_{(0^N,0^N)},\imath_0^!\varrho_\righ^{-1}\IC_{(\blambda,\bmu)})$
  where $\imath_0$ stands for the
  locally closed embedding of the $\bG_\bO$-orbit
  $\Omega_{(0^N,0^N)}=\Gr^0\times(\bV_0\setminus t\bV_0)\hookrightarrow\Gr\times\bV$.
  Due to the pointwise purity of $\IC_{(\blambda,\bmu)}$~\cite[Section~3]{fgt},
  $\imath_0^!\IC_{(\blambda,\bmu)}$ is a pure local system on $\Omega_{(0^N,0^N)}$; hence
  $H^\bullet_{\bG_\bO}(\Omega_{(0^N,0^N)},\imath_0^!\IC_{(\blambda,\bmu)})$ is also pure of weight $w$.
  It follows that the kernel of $\vartheta$ vanishes, and $\vartheta$ is injective.
\end{proof}

\begin{cor}\label{comm-int}
  The algebra $\fE=\Ext_{D^{\deeq}_{!*\bG_\bO}(\Gr\times\ooV)}(E_0,E_0)$ is a commutative integrally closed
  domain.
\end{cor}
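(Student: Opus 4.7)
Via \lemref{injective} combined with \corref{localized ext}, the localization map exhibits $\fE$ as a subring of $\fA$, the coordinate ring of the irreducible affine variety $\Isom(V_2,V_1)\times\Hom(V_1,V_2)$. Since $\fA$ is a commutative integral domain, so is $\fE$. What remains is to show that $\fE$ is integrally closed in its fraction field.

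For this I first establish the $*$-analog of \lemref{injective}: the natural map
\[ \Ext_{D^{\deeq}_{*\bG_\bO}(\Gr\times\ooV)}(\varrho_\righ E_0,\varrho_\righ E_0) \hookrightarrow \Ext_{D^{\loc,\deeq}_{*\bG_\bO}(\Gr\times\ooV)}(\varrho_\righ E_0,\varrho_\righ E_0)=\fB \]
is injective, by the same proof (the requisite purity on the $*$-side is inherited from the $!$-case by Verdier duality, exactly as in \lemref{formality}). This produces a second embedding $\fE\hookrightarrow\fB$, compatible with $\fE\hookrightarrow\fA$ on the common fraction field. Both
\[ \fA=R[\det B^{-1}]\qquad\text{and}\qquad \fB=R[\det A^{-1}] \]
are principal localizations of the polynomial ring $R:=\BC[\Hom(V_1,V_2)\times\Hom(V_2,V_1)]$ at the non-associate irreducibles $\det A$ and $\det B$ of the UFD $R$. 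Hence $\fA\cap \fB=R$ inside $\on{Frac}(R)$, and each of $\fA,\fB$ is integrally closed; the integral closure of $\fE$ in $\on{Frac}(\fE)=\on{Frac}(R)$ therefore lies in $\fA\cap \fB=R$.

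The remaining—and principal—step is to establish the equality $\fE=R$, whereupon integral closedness is immediate since $R$ is a polynomial ring. I would prove this by producing, in the appropriate graded piece of $\fE$, the $G_{\bar0}$-equivariant matrix-coefficient classes corresponding to the tautological elements $A\in\Hom(V_1,V_2)$ and $B\in\Hom(V_2,V_1)$: these are precisely the polynomial generators of $\fA$ (respectively $\fB$) that remain uninverted upon localization. A character comparison under $G_{\bar0}=\GL(V_1)\times\GL(V_2)$, using the bisignature parametrization of the simples $\IC_{(\blambda,\bmu)}\in\Perv_{\bG_\bO}(\Gr\times\bV)$ from \cite[Prop.~8]{fgt} and the Cauchy decomposition of $R$, then shows that the $G_{\bar0}$-characters of $\fE$ and $R$ coincide, forcing $\fE=R$. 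The main obstacle is this last character identification: once one has produced sufficiently many explicit Ext classes to see the character of $R$ inside $\fE$, the inclusion $\fE\subseteq R$ becomes an equality and the corollary follows.
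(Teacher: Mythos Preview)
Your argument for the \emph{commutative domain} assertion is exactly the paper's: \lemref{injective} together with \corref{localized ext}(a) embeds $\fE$ into the regular ring $\fA$, and the $*$-side analogue (by Verdier duality, as you say) gives a second embedding $\fE\hookrightarrow\fB$. The identification $\fA\cap\fB=R$ inside $\on{Frac}(R)$ is also correct and yields $\fE\subseteq R$, so that the integral closure of $\fE$ lies in $R$.

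The gap is in your final step. You propose to finish by proving $\fE=R$ via a ``character comparison'', but this is precisely (the ungraded form of) \lemref{phi isom}, which in the paper comes \emph{after} \corref{comm-int} and whose proof (through \lemref{Roma's}) explicitly invokes \corref{comm-int} to know that $\Spec\fE$ is irreducible. So your plan inverts the paper's logical order. More importantly, the character comparison is not a routine step you can defer: knowing the $G_{\bar0}$-character of $\fE$ amounts to knowing all the dimensions $\dim\Ext^\bullet_{D_{!*\bG_\bO}(\Gr\times\ooV)}(E_0,\IC_\blambda*E_0*\IC_\bmu)$, which is exactly the content of the main calculation and is not available at this point. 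Producing the degree-$1$ classes $h,h^*$ (your ``tautological $A,B$'') only gives a homomorphism $R\to\fE$; its bijectivity is the hard part, and the paper establishes it not by counting characters but by the codimension-$2$ localization argument of \lemref{Roma's}.

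In the paper's flow, the only part of \corref{comm-int} that is actually used downstream is that $\fE$ is a domain (so $\Spec\fE$ is irreducible), which feeds into \lemref{Roma's}. The integral closedness then follows \emph{a posteriori} once \lemref{phi isom} identifies $\fE$ with the polynomial ring $\fG_{1,1}$. Thus the corollary as stated is justified by the combination of \lemref{injective} (for the domain property) and the subsequent \lemref{phi isom} (for normality), not by an independent direct argument of the kind you outline.
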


\subsection{Calculation of the Ext algebra}
\label{calculation}
Recall that the first fundamental coweight of $\GL_N$ is $\omega_1=(1,0,\ldots,0)$, and
$\omega_1^*=(0,\ldots,0,-1)$. We have
$\RHom_{D_{!*\bG_\bO}(\Gr\times\ooV)}(\IC_{\omega_1}*E_0,E_0*\IC_{\omega_1})=
\RHom_{D_{!*\bG_\bO}(\Gr\times\ooV)}(E_0,\IC_{\omega_1^*}*E_0*\IC_{\omega_1})$,
$\RHom_{D_{!*\bG_\bO}(\Gr\times\ooV)}(E_0*\IC_{\omega_1},\IC_{\omega_1}*E_0)=
\RHom_{D_{!*\bG_\bO}(\Gr\times\ooV)}(E_0,\IC_{\omega_1}*E_0*\IC_{\omega_1^*})$.
Now $\IC_{\omega_1}*E_0$ is the constant IC-sheaf of the stratum closure formed by all the pairs
$(L,v)$ such that the lattice $L$ contains $\bV_0$ as a hyperplane, and $v\in L$.
Furthermore, $E_0*\IC_{\omega_1}$ is the constant IC-sheaf of the stratum closure formed by all
the pairs $(L,v)$ such that the lattice $L$ contains $\bV_0$ as a hyperplane, and $v\in\bV_0$.
In particular, the latter stratum closure is a smooth divisor in the former stratum closure,
so we have canonical elements $h\in\Ext^1_{D_{!*\bG_\bO}(\Gr\times\ooV)}(\IC_{\omega_1}*E_0,E_0*\IC_{\omega_1})$
and $h^*\in\Ext^1_{D_{!*\bG_\bO}(\Gr\times\ooV)}(E_0*\IC_{\omega_1},\IC_{\omega_1}*E_0)$.
Hence we obtain the subspaces
$h\otimes V_{\omega_1^*}\otimes V_{\omega_1}\subset\fE^1$ and
$h^*\otimes V_{\omega_1}\otimes V_{\omega_1^*}\subset\fE^1$ (see~Section~\ref{deeq ext} for
the definition of $\fE^\bullet$ and~(\ref{def deeq})). We identify the former subspace with
$\Hom(V_1,V_2)$ and the latter one with $\Hom(V_2,V_1)$. Thus we obtain a homomorphism
$\phi^\bullet\colon\Sym\big(\Pi\Hom(V_1,V_2)[-1]\oplus\Pi\Hom(V_2,V_1)[-1]\big)=
\Sym(\fg_{\bar1}[-1])=\fG_{1,1}^\bullet\to\fE^\bullet$
(due to commutativity of $\fE$).

\begin{lem}
  \label{phi isom}
  $\phi^\bullet$ is an isomorphism.
\end{lem}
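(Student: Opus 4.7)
The plan is to compare $\fE^\bullet$ with a localized algebra already computed in \cite{bf}, thereby reducing the lemma to the known non-mirabolic case.

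First, I will use the renormalization equivalence $\varrho_\righ$ to re-identify the Ext-algebra $\fE^\bullet$ (with its $(1,1)$-grading) with the Ext-algebra in $D^{\deeq}_{!\bG_\bO}(\Gr\times\oV)$ computed with respect to $\varrho_\righ^{-1}E_0$ (in the $(2,0)$-grading). By Lemma \ref{injective} and Corollary \ref{localized ext}(a), the latter embeds injectively into the localization $\fA^\bullet=\Sym(\Hom(V_1,V_2)[-2])\otimes\BC[\Isom(V_1,V_2)]$. I will then check that under this embedding, the classes $h,h^*$ map to canonical generators: $h$ to the tautological element of $\Hom(V_1,V_2)\subset\Sym(\Hom(V_1,V_2)[-2])$ in cohomological degree $2$, and $h^*$ to the tautological element of $\Hom(V_2,V_1)\subset\BC[\Isom(V_1,V_2)]=\Sym(\Hom(V_2,V_1))[\det^{-1}]$ in cohomological degree $0$. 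This step uses the restriction functor $u_0^!\circ\pr^!$ of Lemma \ref{restriction to zero}, which restricts $h,h^*$ to the analogous classes in the non-mirabolic Satake Ext-algebra identified in \cite{bf}.

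Granted this identification, the composition
\[
\fG_{1,1}^\bullet\xrightarrow{\phi^\bullet}\fE^\bullet\hookrightarrow\fA^\bullet
\]
(appropriately renormalized) equals the canonical inclusion $\fG_{2,0}^\bullet\hookrightarrow\fA^\bullet$, which is injective; hence $\phi^\bullet$ is injective. For surjectivity, I will show that the image of $\fE^\bullet$ coincides with $\fG_{2,0}^\bullet$ inside $\fA^\bullet$: the Ext-algebra $\fE^\bullet$ lives in non-negative cohomological degrees, which translates via renormalization into non-negative $Z(\GL(V_1))$-weight, and this is precisely the weight condition characterizing the subalgebra $\fG_{2,0}^\bullet\subset\fA^\bullet$ (adjoining negative weights corresponds exactly to inverting the determinant). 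Combined with the containment of $\fG_{2,0}^\bullet$ in the image of $\phi^\bullet$ obtained above, this yields $\fE^\bullet=\fG_{2,0}^\bullet$ as subalgebras of $\fA^\bullet$, proving surjectivity.

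The main obstacle I anticipate is pinning down that $h$ and $h^*$ hit the correct tautological generators of $\fA^\bullet$---not merely nonzero elements in the correct $G_{\bar 0}$-isotypic components. This is a compatibility statement relating the divisor-class construction of $h,h^*$ in Section \ref{calculation} (via the smooth divisor $E_0*\IC_{\omega_1}$ inside $\IC_{\omega_1}*E_0$ and its counterpart) to the degree-$2$ generators appearing in the derived Satake equivalence of \cite{bf}. Establishing it rigorously requires tracing how the divisor classes restrict to the zero section $\Gr\times\{0\}\subset\Gr\times\bV$, and then invoking the explicit generators of the non-mirabolic deequivariantized Ext-algebra of $D_{\bG_\bO}(\Gr)$.
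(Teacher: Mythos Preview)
Your injectivity argument is sound in outline and close to what the paper does implicitly: once $h,h^*$ are identified with the tautological generators under the localization map to $\fA^\bullet$, the composition $\fG_{2,0}^\bullet\to\fE^\bullet\to\fA^\bullet$ is the standard inclusion, and injectivity of $\phi^\bullet$ follows.

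The surjectivity argument, however, has a genuine gap. You claim that the condition ``non-negative $Z(\GL(V_1))$-weight'' (equivalently, after undoing the renormalization, $\deg_{1,1}\geq 0$) cuts out exactly $\fG_{2,0}^\bullet$ inside $\fA^\bullet$. This is false for $N\geq 2$. Recall that as a $G_{\bar0}$-module, $\BC[\Isom(V_1,V_2)]\cong\bigoplus_{\blambda}\BS_\blambda V_1\otimes\BS_{\blambda^*}V_2$, the sum running over all signatures, whereas $\BC[\Hom(V_1,V_2)]$ is the sub-sum over partitions. The signature $\blambda=(1,-1,0,\ldots,0)$ has $|\blambda|=0$, so the corresponding isotypic component of $\BC[\Isom(V_1,V_2)]\subset\fA^\bullet$ sits in cohomological degree $0$ in the $(2,0)$-grading, has $\deg_1=0$, and hence has $\deg_{1,1}=0$ and $\deg_{0,2}=0$ as well. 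Yet it does not lie in $\BC[\Hom(V_1,V_2)]\subset\fG_{2,0}^\bullet$. Thus no combination of the three cohomological non-negativity conditions suffices to characterize $\fG_{2,0}^\bullet$ inside $\fA^\bullet$, and you cannot conclude $\fE^\bullet\subset\fG_{2,0}^\bullet$ from a single localization.

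The paper closes this gap by using \emph{both} localizations: it shows that $\phi^*\colon\Spec\fE\to\fg_{\bar1}^*$ is an isomorphism over the open set where $A$ is invertible (via Corollary~\ref{localized ext}(a)) and also over the open set where $B$ is invertible (via Corollary~\ref{localized ext}(b)), checks that these two identifications are compatible by computing $h\circ h^*=c_1(\CL)$, and then invokes the codimension-$2$ argument of Lemma~\ref{Roma's} together with the irreducibility of $\Spec\fE$ (Corollary~\ref{comm-int}). Your identification of $h,h^*$ is the same ingredient as the paper's $h\circ h^*$ computation, but to finish you must also bring in $\fB^\bullet$ and the codimension-$2$ step; one localization alone does not see enough.
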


\begin{proof}
  We can and will disregard the grading. The morphism $\phi$ induces the morphism
  $\phi^*\colon\Spec\fE\to\fg^*_{\bar1}$ that is an isomorphism over the open subset
  $\Isom(V_2,V_1)\times\Hom(V_1,V_2)\subset\fg^*_{\bar1}$ due to~Corollary~\ref{localized ext}(a).
  Similarly, $\phi^*$ is an isomorphism over the open subset
  $\Hom(V_2,V_1)\times\Isom(V_1,V_2)$ due to~Corollary~\ref{localized ext}(b).

  Since the complement to the union of these two open subsets has codimension~2 in
  $\fg_{\bar1}^*$, we can apply~Lemma~\ref{Roma's} below.
  Note that the irreducibility of $\Spec\fE$ is guaranteed by Corollary~\ref{comm-int}.
  It remains only to check that the ratio of the above isomorphisms
  is the identity birational isomorphism between the varieties $\Isom(V_2,V_1)\times\Hom(V_1,V_2)$ and
  $\Hom(V_2,V_1)\times\Isom(V_1,V_2)$.

  The composition $h\circ h^*\in\Ext^2_{D_{!*\bG_\bO}(\Gr\times\ooV)}(E_0*\IC_{\omega_1},E_0*\IC_{\omega_1})$
  is the multiplication by the first Chern class of the normal line bundle $\CN$ to the divisor
  $\on{supp}(E_0*\IC_{\omega_1})$ in $\on{supp}(\IC_{\omega_1}*E_0)$. Recall that
  $\Gr^{\omega_1}\simeq\BP^{N-1}$. The line bundle $\CN$ is pulled back from the line bundle
  $\CO(1)$ on $\BP^{N-1}\simeq\Gr^{\omega_1}$. Recall also that the restriction of the determinant
  line bundle $\CL$ from $\Gr$ to $\Gr^{\omega_1}$ is also isomorphic to $\CO(1)$.
  We conclude that $h\circ h^*=c_1(\CL)$.

  On the other hand, in the equivariant Satake category
  \[D_{\bG_\bO}(\Gr)\cong D_\perf^{\GL_N}(\Sym(\fg\fl_N[-2]))\cong D_\perf^{G_{\bar0}}(\fA^\bullet),\]
  the first Chern class
  $c_1(\CL)\in\Ext^2_{D_{\bG_\bO}(\Gr)}(\IC_0*\IC_{\omega_1},\IC_0*\IC_{\omega_1})\subset\fA^2$
  corresponds to 
  the identity element (shifted by 2) $\Id\in\Hom(V_2,V_1)^*\otimes\Hom(V_2,V_1)$.

The lemma is proved.
\end{proof}

\begin{lem}
  \label{Roma's}
Let $\pi\colon X\to\BA^n$ be a morphism from an irreducible affine algebraic variety to an affine space.
  Let $f,g\in \BC[\BA^n]$ be such that
the  codimension of the closed subvariety $\BA^n\setminus (U_f\cup U_g)$ in
$\BA^n$ is at least $2$, where $U_f=\{ u\in \BA^n\mid f(u)\neq0\}$.
Assume moreover that each of the morphisms
$\pi^{-1}(U_f)\to U_f$ and $\pi^{-1}(U_g)\to U_g$, induced by $\pi$,
is an isomorphism. Then $\pi$ is an isomorphism.
\end{lem}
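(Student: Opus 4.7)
The plan is to reduce the lemma to an application of algebraic Hartogs' principle on $\BA^n$, exploiting that $\BA^n$ is smooth (hence normal) and that $\BA^n\setminus(U_f\cup U_g)$ has codimension $\geq 2$.

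First I would glue the two given isomorphisms. Let $\sigma_f\colon U_f\iso\pi^{-1}(U_f)$ and $\sigma_g\colon U_g\iso\pi^{-1}(U_g)$ denote the inverses to the restrictions of $\pi$. On the overlap $U_{fg}=U_f\cap U_g$, both $\sigma_f$ and $\sigma_g$ are inverse to $\pi$, so they agree; hence they glue to a single morphism $\sigma\colon U_f\cup U_g\to X$ satisfying $\pi\circ\sigma=\on{id}_{U_f\cup U_g}$ and $\sigma\circ\pi=\on{id}_{\pi^{-1}(U_f\cup U_g)}$. In particular, $\pi$ restricts to an isomorphism $\pi^{-1}(U_f\cup U_g)\iso U_f\cup U_g$.

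Next I would extend $\sigma$ to all of $\BA^n$. Since $X$ is affine, fix a closed embedding $X\hookrightarrow\BA^N$; then $\sigma$ is given by $N$ regular functions on $U_f\cup U_g$, i.e.\ $N$ elements of the ring $\Gamma(U_f\cup U_g,\CO)$. Because $\BA^n$ is smooth (in particular, normal) and $\BA^n\setminus(U_f\cup U_g)$ has codimension $\geq 2$, the algebraic Hartogs extension theorem gives $\Gamma(U_f\cup U_g,\CO)=\Gamma(\BA^n,\CO)=\BC[\BA^n]$. Consequently the $N$ functions extend uniquely to $\BA^n$, yielding a morphism $\tilde\sigma\colon\BA^n\to\BA^N$. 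Its image lands in $X$: indeed, $\tilde\sigma(U_f\cup U_g)\subseteq X$, the set $U_f\cup U_g$ is dense in $\BA^n$ (its complement has codimension $\geq 2$), and $X\subseteq\BA^N$ is closed, so $\tilde\sigma(\BA^n)\subseteq X$.

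Finally I would check that $\tilde\sigma$ is the required inverse of $\pi$. The identity $\pi\circ\tilde\sigma=\on{id}_{\BA^n}$ holds on the dense open $U_f\cup U_g\subseteq\BA^n$ by construction, hence on all of $\BA^n$ since $\BA^n$ is reduced. For the other composition, $\tilde\sigma\circ\pi$ agrees with $\on{id}_X$ on the nonempty open $\pi^{-1}(U_f\cup U_g)\subseteq X$; as $X$ is irreducible (hence integral), this open set is dense and the equality $\tilde\sigma\circ\pi=\on{id}_X$ extends to $X$. Therefore $\pi$ is an isomorphism. The only nontrivial input is the Hartogs extension step, which is essentially the sole substantive ingredient — the rest is routine gluing and density.
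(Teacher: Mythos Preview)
Your proof is correct and uses essentially the same ingredients as the paper's: Hartogs extension on $\BA^n$ (from the codimension~$\geq 2$ hypothesis) together with irreducibility of $X$. The only difference is packaging: the paper shows directly that $\pi^*\colon\BC[\BA^n]\to\BC[X]$ is an isomorphism by a short diagram chase (using that $\BC[\BA^n]\iso\Gamma(U_f\cup U_g,\CO)$ and that $\BC[X]\hookrightarrow\Gamma(X_f\cup X_g,\CO)$), whereas you construct the inverse morphism geometrically by extending the glued section $\sigma$ via an affine embedding of $X$.
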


\begin{proof}
Let
$X_f=\pi^{-1}(U_f)$, resp. $X_g=\pi^{-1}(U_g)$, and write
$j\colon U_f\cup U_g\hookrightarrow \BA^n$,
resp. $j_X\colon X_f\cup X_g\hookrightarrow X$,  for the
open imbedding.
We have the following commutative diagram

\[
\xymatrix{
\BC[\BA^n]\ \ar@{^{(}->}[r]^<>(0.5){j^*}\ar[d]^<>(0.5){\pi^*}
&\ \Gamma(U_f\cup U_g, {\mathcal O}_{U_f\cup U_g})\ar@{=}[r]&
\{u\oplus v\in \BC[U_f]\oplus \BC[U_g]\mid u|_{U_f\cap U_g}=v|_{U_f\cap U_g}\}
\ar[d]^<>(0.5){(\pi|_{X_f})^*\oplus (\pi|_{X_g})^*}\\
\BC[X]\ar@{^{(}->}[r]^<>(0.5){j_X^*}&
\Gamma(X_f\cup X_g, {\mathcal O}_{X_f\cup X_g})\ar@{=}[r]&
\{\tilde u\oplus \tilde v\in \BC[X_f]\oplus \BC[X_g]\mid
\tilde u|_{X_f\cap X_g}=\tilde v|_{X_f\cap X_g}\}
}
\]

The map $j^*$ in this diagram is an isomorphism by the codimension $\geq 2$ assumption.
The map $j_X^*$  is injective since $X$ is irreducible.
The assumptions imply also that the vertical map $(\pi|_{X_f})^*\oplus (\pi|_{X_g})^*$ on the right is  an isomorphism.
It follows that  the vertical map $\pi^*$ on the left must be an isomorphism, as required.
\end{proof}

\subsection{Restriction to $\Gr\times\bV_0$}
\label{Restriction}
The existence of the desired equivalence $\Phi^{1,1}$ of~Theorem~\ref{main spherical mirabolic}
follows from~Lemma~\ref{purity} and~Lemma~\ref{phi isom}. The equivalences $\Phi^{0,2}$ and
$\Phi^{2,0}$ are obtained by conjugating with $\varrho_\righ^{\pm1}$. It remains to check their
compatibility with monoidal structures.

We denote by $\bar\jmath_0$ (resp.\ $\bar\jmath_{-1}$) the closed embedding
$\Gr\times\bV_0\hookrightarrow\Gr\times\bV$ (resp.\
$\Gr\times t\bV_0\hookrightarrow\Gr\times\bV$). We also denote by $\jmath_0$ the locally closed
embedding $\Gr\times(\bV_0\setminus t\bV_0)\hookrightarrow\Gr\times\bV$.
Our goal in this section is a description in terms of $\Phi^{1,1}$ of the endofunctors
$\bar\jmath_{0*}\bar\jmath{}_0^!,\ \jmath_{0*}\jmath_0^!\colon D_{!*\bG_\bO}(\Gr\times\bV)\to
D_{!*\bG_\bO}(\Gr\times\bV)$.

Recall the setup and notation of~Section~\ref{conv coh}. We have the natural morphisms
\[p\colon\CQ^A\to\Hom(V_1,V_2)\times\Hom(V_2,V_1)=\Pi\fg^*_{\bar1},\]
\[q\colon\CQ^A\to\Hom(V'_1,V_2)\times\Hom(V_2,V'_1)=\Pi\fg^*_{\bar1};\]
\[p(A,A',A'',B,B',B'')=(A,B),\ q(A,A',A'',B,B',B'')=(A',B').\] We also have the
natural morphisms \[p,q\colon G_\CQ\to G_{\bar0},\ p(g_1,g'_1,g_2)=(g_1,g_2),\
q(g_1,g'_1,g_2)=(g'_1,g_2).\] Clearly, $p,q\colon\CQ^A\to\Pi\fg^*_{\bar1}$ are equivariant with
respect to $p,q\colon G_\CQ\to G_{\bar0}$. Hence we have the convolution functor
\[p_*q^*\colon\Coh^{G_{\bar0}}(\Pi\fg^*_{\bar1})=\Coh(G_{\bar0}\backslash\Pi\fg^*_{\bar1})\xrightarrow{q^*}
\Coh(G_\CQ\backslash\CQ^A)\xrightarrow{p_*}\Coh(G_{\bar0}\backslash\Pi\fg^*_{\bar1})=
\Coh^{G_{\bar0}}(\Pi\fg^*_{\bar1})\] (in particular, $p_*$ involves taking $\GL(V'_1)$-invariants).
We will actually need the same named functor $p_*q^*\colon
D_\perf^{G_{\bar0}}(\fG_{1,1}^\bullet)\to D_\perf^{G_{\bar0}}(\fG_{1,1}^\bullet)$
defined similarly using
the dg-algebra with trivial differential $\fG_{1,1}^\bullet\otimes\BC[\Hom(V_1,V'_1)]$
(the grading on $\BC[\Hom(V_1,V'_1)]$
is trivial, and if we disregard the grading, then
$\fG_{1,1}^\bullet\otimes\BC[\Hom(V_1,V'_1)]\simeq\BC[\CQ^A]$).

We also have a $G_\CQ$-invariant subvariety $\CQ^A_0\subset\CQ^A$ given by the equation that
$A''$ is noninvertible.
The restriction of $p,q$ to $\CQ^A_0$ will be denoted by $p_0,q_0$. As above, we obtain the
functor $p_{0*}q_0^*\colon D_\perf^{G_{\bar0}}(\fG_{1,1}^\bullet)\to D_\perf^{G_{\bar0}}(\fG_{1,1}^\bullet)$.

\begin{prop}
  \label{corestriction}
  \textup{(a)}  There is an isomorphism of functors
  \[\bar\jmath_{0*}\bar\jmath{}_0^!\circ\Phi^{1,1}\simeq
  \Phi^{1,1}\circ p_*q^*\colon D^{G_{\bar0}}_\perf(\fG_{1,1}^\bullet)\to D_{!*\bG_\bO}(\Gr\times\oV).\]

  \textup{(b)} The isomorphism in \textup{(a)} can be extended to the following commutative diagram
  of morphisms:
  \[
  \xymatrix{
    \bar\jmath_{-1*}\bar\jmath{}_{-1}^!\circ\Phi^{1,1} \ar[r]^-\sim \ar[d] \ar@/_2pc/[dd]   &
    \Phi^{1,1}\circ\big(\det V_1\otimes(p_*q^*)\circ(\det\!{}^{-1}V'_1\otimes{-})\big)
    \ar[d] \ar@/^2pc/[dd]^{\cdot\det A''} \\
    \Phi^{1,1} \ar[r]^\Id & \Phi^{1,1}   \\
    \bar\jmath_{0*}\bar\jmath{}_0^!\circ\Phi^{1,1} \ar[r]^\sim \ar[u] & \Phi^{1,1}\circ p_*q^*. \ar[u]
  }
  \]

  \textup{(c)} There is an isomorphism of functors \[\jmath_{0*}\jmath_0^!\circ\Phi^{1,1}\simeq
  \Phi^{1,1}\circ p_{0*}q_0^*\colon D^{G_{\bar0}}_\perf(\fG_{1,1}^\bullet)\to D_{!*\bG_\bO}(\Gr\times\oV).\]
\end{prop}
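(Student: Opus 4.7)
I will prove part (a) first, then derive (b) by a twist and (c) by taking cones of a natural transformation.

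\smallskip

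\emph{Part (a).} My first observation is that both endofunctors are right-linear over the spherical Hecke category $\Perv_{\bG_\bO}(\Gr)\simeq\Rep(\GL_N)$. Indeed, inspection of the convolution diagram~\eqref{right conv} shows that right convolution with $\IC_\bmu$ leaves the $\bV$-coordinate of $\Gr\times\bV$ untouched (the $v$-component of $[g_1g_2,g_2^{-1}v]$ as a point of $\Gr\times\bV$ is $g_1v$, independent of $g_2$), hence commutes with the closed restriction $\bar\jmath_0^!$. On the coherent side, $p$ and $q$ share the same $\GL(V_2)$-component of the projection $G_\CQ\to G_{\bar0}$, so $p_*q^*$ is $\GL(V_2)$-equivariant. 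Next I compute both functors on the rank-one generator. Since $E_0$ is supported on $\Gr^0\times\bV_0$, the adjunction counit of $\bar\jmath_0$ yields $\bar\jmath_{0*}\bar\jmath_0^!E_0\simeq E_0$. On the coherent side $\Phi^{1,1}(\fG_{1,1}^\bullet)=E_0$ (proof of~\thmref{main spherical mirabolic}), so I want $p_*q^*\fG_{1,1}^\bullet\simeq\fG_{1,1}^\bullet$. Using the isomorphism $\CQ^A\simeq\Hom(V_1,V'_1)\times\Hom(V'_1,V_2)\times\Hom(V_2,V_1)$ and the factorization of $p$ through the matrix-multiplication map $\mu\colon\Hom(V_1,V'_1)\times\Hom(V'_1,V_2)\to\Hom(V_1,V_2)$, the affineness of $\mu$ (no higher pushforward), linear reductivity of $\GL(V'_1)$ (no higher invariants), and the first fundamental theorem of invariant theory together yield $(p_*\CO_{\CQ^A})^{\GL(V'_1)}\simeq\BC[\Hom(V_1,V_2)\times\Hom(V_2,V_1)]=\fG_{1,1}^\bullet$ with matching grading.

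\smallskip

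The next step — and the principal content of (a) — is to promote this base-case agreement to an isomorphism of endofunctors. I would identify both sides with integral kernels in $D^{G_{\bar0}\times G_{\bar0}}_\perf(\fG_{1,1}^\bullet\otimes\fG_{1,1}^\bullet)$: on the coherent side the kernel of $p_*q^*$ is $(p,q)_*\CO_{\CQ^A}$ after $\GL(V'_1)$-invariants; on the topological side the kernel of $\bar\jmath_{0*}\bar\jmath_0^!$ is the pushforward along $(\bar\jmath_0,\bar\jmath_0)$ of the diagonal kernel in $(\Gr\times\bV_0)^2$. Matching these reduces, via~\lemref{purity}, to a computation in the deequivariantized Ext algebra $\fE^\bullet\simeq\fG_{1,1}^\bullet$ of~\lemref{phi isom}; the key point is that the natural transformation $p_*q^*\to\Id$ induced by the section $A''=\on{Id}$ of $\CQ^A$ (identifying the slice $\{A''\in\GL\}$ with the diagonal in $\CQ^A$) matches, via $\Phi^{1,1}$, the adjunction counit $\bar\jmath_{0*}\bar\jmath_0^!\to\Id$. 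Combined with the base-case identification of the two kernels on the generator, this forces the full isomorphism of endofunctors.

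\smallskip

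\emph{Parts (b) and (c).} For (b), I use the $\bG_\bO$-equivariant automorphism $\mu_t\colon v\mapsto tv$ of $\bV$, which conjugates $\bar\jmath_0$ to $\bar\jmath_{-1}$. Under the Satake equivalence $\mu_t$ acts as a twist by the determinant character, which is exactly what the factor $\det V_1\otimes{-}\otimes\det^{-1}V'_1$ records on the coherent side. The inclusion $t\bV_0\hookrightarrow\bV_0$ produces the natural transformation $\bar\jmath_{-1*}\bar\jmath_{-1}^!\to\bar\jmath_{0*}\bar\jmath_0^!$, whose coherent avatar is multiplication by the tautological section $\det A''\in\Gamma(\CQ^A,\det V_1^{-1}\otimes\det V'_1)$. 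For (c), the open/closed decomposition $t\bV_0\subset\bV_0\supset(\bV_0\setminus t\bV_0)$ gives a distinguished triangle $\bar\jmath_{-1*}\bar\jmath_{-1}^!\to\bar\jmath_{0*}\bar\jmath_0^!\to\jmath_{0*}\jmath_0^!\to$, which by (b) matches the cone of $\cdot\det A''$; that cone equals $p_{0*}q_0^*$ by the very definition of $\CQ^A_0=\{\det A''=0\}$.

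\smallskip

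\emph{Main obstacle.} The hard part is the kernel-matching argument in the second paragraph: only right-linearity is automatic, and the $(\fG_{1,1}^\bullet,\fG_{1,1}^\bullet)$-bimodule structures of the two kernels must be matched. I would carry this out by computing the relevant Ext-algebras, exploiting the Cauchy-type decomposition of $p_*q^*$ on the coherent side (expanding $(p,q)_*\CO_{\CQ^A}$ in $\GL(V'_1)$-isotypic components) and the mirabolic orbit IC-stratification on the topological side; the resulting matching of the single Chern-class generator, computed in~\lemref{phi isom}, is what ultimately rigidifies the functor isomorphism.
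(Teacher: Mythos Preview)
Your argument for (c) via the distinguished triangle matches the paper, and your derivation of the top row of (b) from (a) by the twist $v\mapsto tv$ (the paper's $\alpha$) is also the paper's route. The substantive gaps are in (a) and in the commutativity assertion of (b).

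For (a), the paper takes a different and more direct line than your kernel-matching sketch. It observes that $\bar\jmath_{0*}\bar\jmath_0^!$ is the colocalization onto the full subcategory $D_{!*\bG_\bO}(\Gr\times\bV_0)$, generated by $\{\IC_{(\bnu,\bmu)}:\bnu\le0\}$, which $\Phi^{1,1}$ carries to the full subcategory $D_\perf^{G_{\bar0},\geq}(\fG_{1,1}^\bullet)$ generated by $\{V_{1,\blambda}\otimes\fG_{1,1}^\bullet\otimes V_{2,\bmu}:\blambda\ge0\}$. Thus (a) reduces to two concrete checks on the coherent side: (i) the adjunction isomorphism $\Hom(V_{1,\blambda}\otimes\fG^\bullet\otimes V_{2,\bmu},\,-)\iso\Hom(V_{1,\blambda}\otimes\fG^\bullet\otimes V_{2,\bmu},\,p_*q^*(-))$ for $\blambda\ge0$, which is exactly~\lemref{IT}(c) with $U_1=V_2,\ U_2=V_1,\ U_3=V'_1$; and (ii) that $p_*q^*$ lands in $D_\perf^{G_{\bar0},\geq}$, proved by noting that on the fiber $z_0^*p_*q^*(\CM)$ the $\GL(V_1)$-action arises only through functions of $A''$ and is therefore polynomial, and then invoking the Koszul equivalence $\varkappa$. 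Your approach, by contrast, establishes only right-linearity and agreement on the single object $E_0$; since the category is generated by $E_0$ under \emph{both} left and right Hecke actions and neither functor is a priori left-linear, this does not pin down the endofunctor. The sentence ``this forces the full isomorphism'' is exactly the step you have not supplied; executing it would in effect reprove the adjunction above.

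For (b), naming the two outer vertical arrows is not enough: the content of (b) is the \emph{commutativity} of the outer curved square, i.e.\ that the canonical map $\bar\jmath_{-1*}\bar\jmath_{-1}^!\to\bar\jmath_{0*}\bar\jmath_0^!$, transported along the horizontal isomorphisms, equals multiplication by $\det A''$. The paper proves this by a short abstract lemma --- for idempotent comonads $T_0,T_1$ with counits $\varepsilon_i$ there is at most one $\chi\colon T_1\to T_0$ with $\varepsilon_1=\varepsilon_0\circ\chi$ --- and gives a second proof via purity of the kernels that is later reused in~\secref{20 monoidal}. Your write-up omits any such uniqueness or rigidity argument.
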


\begin{proof}
  (a) The support of $\IC_{(\bnu,\bmu)}$ lies in $\Gr\times\bV_0$ iff $\bnu$ is a negative
  partition: $(0\geq\nu_1\geq\ldots\geq\nu_N)$ (see~\cite[Proof of Proposition~8]{fgt}).
  So $D_{!*\bG_\bO}(\Gr\times\bV_0)$ is generated by the collection of objects
  $\IC_{(\bnu,\bmu)}$ where $\bnu$ is a negative partition.
  Thus we see that $\bar\jmath_{0*}$ is a fully faithful functor whose image is generated by
  $\{\IC_{(\bnu,\bmu)}\mid\bnu\le0\}$ and
  $\bar\jmath_0^!$ is the right adjoint of $\bar\jmath_{0*}$.
 Recall that $V^*_\bnu$ denotes
  an irreducible representation of $\GL(V)$ obtained by applying the corresponding Schur
  functor to $V^*$. The result of application of the same Schur functor to $V^*_1$
  (resp.\ $V^{\prime*}_1,V^*_2$) will be denoted $V^*_{1,\bnu}$
  (resp.\ $V^{\prime*}_{1,\bnu},V^*_{2,\bnu}$). Since
  \[\IC_{(\bnu,\bmu)}=\Phi^{1,1}(V_{1,\bnu}^*\otimes\fG_{1,1}^\bullet\otimes V_{2,\bmu}^*)=
  \Phi^{1,1}(V_{1,\bnu^*}\otimes\fG_{1,1}^\bullet\otimes V_{2,\bmu^*})\] (where
  $\bnu^*=(-\nu_N,-\nu_{N-1},\ldots,-\nu_2,-\nu_1)$ for $\bnu=(\nu_1,\ldots,\nu_N)$),
  we have to show that $p_*q^*$ lands in the subcategory
  $D_\perf^{G_{\bar0},\geq}(\fG_{1,1}^\bullet)\subset D_\perf^{G_{\bar0}}(\fG_{1,1}^\bullet)$ generated by objects
  $\{V_{1,\blambda}\otimes\fG_{1,1}^\bullet\otimes V_{2,\bmu}\mid\bla\geq0\}$ and to construct the
  adjunction  isomorphism
  \begin{multline*}
    \Hom_{D_\perf^{G_{\bar0}}(\fG_{1,1}^\bullet)}(V_{1,\blambda}\otimes\fG_{1,1}^\bullet\otimes V_{2,\bmu},
  V_{1,\blambda'}\otimes\fG_{1,1}^\bullet\otimes V_{2,\bmu'})\\ \iso
  \Hom_{D_\perf^{G_{\bar0}}(\fG_{1,1}^\bullet)}(V_{1,\blambda}\otimes\fG_{1,1}^\bullet\otimes V_{2,\bmu},
  p_*q^*(V'_{1,\blambda'}\otimes\fG_{1,1}^\bullet\otimes V_{2,\bmu'}))
  \end{multline*}
  for a partition $\blambda$. Equivalently, we have to construct an isomorphism
  \begin{multline*}\big((V_{1,\blambda}^{\prime*}\otimes
    V'_{1,\blambda'})\otimes\BC[\Hom(V'_1,V_2)\times\Hom(V_2,V'_1)]
  \otimes(V_{2,\bmu}^*\otimes V_{2,\bmu'})\big)^{\GL(V'_1)\times\GL(V_2)}\\ \iso
  \big(V_{1,\blambda}^*\otimes
  p_*q^*(V'_{1,\blambda'}\otimes\BC[\Hom(V'_1,V_2)\times\Hom(V_2,V'_1)]\otimes V_{2,\bmu'})
  \otimes V_{2,\bmu}^*\big)^{\GL(V_1)\times\GL(V_2)}\\
  :=\big(V^*_{1,\blambda}\otimes
  V'_{1,\blambda'}\otimes\BC[\CQ^A]\otimes(V^*_{2,\bmu}\otimes V_{2,\bmu'})\big)^{G_\CQ}.
  \end{multline*}
  Recall that $\CQ^A=\Hom(V_1,V'_1)\times\Hom(V'_1,V_2)\times\Hom(V_2,V_1)$. We
  apply~Lemma~\ref{IT}(c) below to $U_1=V_2,\ U_2=V_1,\ U_3=V'_1,\ \bnu=\blambda$
  (notation of~\ref{inth}) to obtain an isomorphism
  \[V^{\prime*}_{1,\blambda}\otimes\BC[\Hom(V_2,V'_1)]\iso\big(\BC[\Hom(V_2,V_1)\otimes
    V^*_{1,\blambda}\otimes\BC[\Hom(V_1,V'_1)]\big)^{\GL(V_1)}\]
    whose inverse induces the desired adjunction isomorphism.

    We still have to check that the essential image of $p_*q^*$ lies in the subcategory
    $D_\perf^{G_{\bar0},\geq}(\fG_{1,1}^\bullet)\subset D_\perf^{G_{\bar0}}(\fG_{1,1}^\bullet)$ generated by
    $\{V_{1,\blambda}\otimes\fG_{1,1}^\bullet\otimes V_{2,\bmu}\mid\bla\geq0\}$. We consider the
    homomorphism $\fG_{1,1}^\bullet\to\BC$ killing all the generators, and for
    $\CM\in D^{G_{\bar0}}_\perf(\fG_{1,1}^\bullet)$ we set
    $z_0^*\CM:=\BC\otimes_{\fG_{1,1}^\bullet}^L\!\!\CM\in D^{G_{\bar0}}(\BC)$
    (``the fiber at $0\in\Hom(V_1,V_2)\times\Hom(V_2,V_1)$''). Note that $z_0^*p_*q^*$ lands
    in the category generated by $\{V_{1,\blambda}\otimes V_{2,\bmu}\mid\bla\geq0\}$, i.e.\ the
    category of modules with polynomial action of $\GL(V_1)$. Indeed,
    recall that $G_\CQ$ acts on $\CQ^A$ via
    \[(g_1,g_1',g_2)(A',A'',B)=(g_2A'g_1^{\prime-1},g'_1A''g_1^{-1},g_1Bg_2^{-1}).\]
    Since we impose the conditions $B=0=A:=A'A''$, the action of $\GL(V_1)$ on
    $z_0^*p_*q^*\CN$ (for a free dg-$\fG_{1,1}^\bullet$-module $\CN$) comes from its action on
    functions of $A''$, and the latter action is polynomial.

    Finally, we claim that if the action of $\GL(V_1)$ on $z_0^*\CM$ is polynomial, then
    $\CM$ lies in the subcategory generated by
    $\{V_{1,\blambda}\otimes\fG_{1,1}^\bullet\otimes V_{2,\bmu}\mid\bla\geq0\}$. To this end we
    apply the Koszul equivalence
    $\varkappa\colon D_\fid^{G_{\bar0}}(\Lambda)\iso D^{G_{\bar0}}_\perf(\fG_{1,1}^\bullet)$
    of~Section~\ref{koszul}. It is easy to see that if the action of $\GL(V_1)$ on the
    total cohomology of $\CK\in D_\fid^{G_{\bar0}}(\Lambda)$ is polynomial, then
    $\CK$ lies in the subcategory $\on{Pol}\subset D_\fid^{G_{\bar0}}(\Lambda)$ generated
    by the $G_{\bar0}\ltimes\Lambda$-modules with polynomial action of $\GL(V_1)$ and trivial
    action of $\Lambda$. Now $\varkappa(\on{Pol})$ is the subcategory
    $D_\perf^{G_{\bar0},\geq}(\fG_{1,1}^\bullet)\subset D_\perf^{G_{\bar0}}(\fG_{1,1}^\bullet)$. And if
    the action of $\GL(V_1)$ on $z_0^*\CM$ is polynomial, then $\CM\simeq\varkappa(\CK)$, where
    the action of $\GL(V_1)$ on the total cohomology of $\CK$ is polynomial.
    This completes the proof of~(a).

    \medskip

    (b)   
    As $\alpha^*\bar\jmath_{-1*}\bar\jmath{}_{-1}^!\simeq\bar\jmath_{0*}\bar\jmath{}_0^!\alpha^*$
  (notation of~Section~\ref{localization}), we deduce an isomorphism of functors
  \[\bar\jmath_{-1*}\bar\jmath{}_{-1}^!\circ\Phi^{1,1}\simeq\Phi^{1,1}\circ\big(\det V_1\otimes
  (p_*q^*)\circ(\det\!{}^{-1}V'_1\otimes{-})\big)\colon D^{G_{\bar0}}_\perf(\fG_{1,1}^\bullet)\to
  D_{!*\bG_\bO}(\Gr\times\oV).\] Thus the upper and lower rectangles of the diagram in (b)
  are commutative by construction. We have to prove
  the commutativity of the big curved quadrangle.
    The endofunctors $\bar\jmath_{0*}\bar\jmath{}_0^!,\ \bar\jmath_{-1*}\bar\jmath{}_{-1}^!$
    of $D_{!*\bG_\bO}(\Gr\times\bV)$ are equipped with the structure of idempotent comonads.
    The desired commutativity follows from the fact that given two idempotent comonads
    $T_0,T_1\colon{\mathcal C}\to{\mathcal C}$, there is at most one morphism of functors
    $\chi\colon T_1\to T_0$ such that $\varepsilon_1=\varepsilon_0\circ\chi$ for the
    counits $\varepsilon_i\colon T_i\to\Id_{\mathcal C}$. Indeed,
    $\chi=T_0(\varepsilon_1)\circ\chi_1$, where $\chi_1$ is defined as the composition
    \[T_1\cong T_1\circ T_1\xrightarrow{\chi\circ T_1}T_0\circ T_1.\]
    We claim that $\chi_1$ is an isomorphism uniquely defined as the inverse to the morphism
    $\varepsilon_0\circ T_1\colon T_0\circ T_1\iso T_1$. In effect, the composition
    \[T_1\cong T_1\circ T_1\xrightarrow{\chi\circ T_1}T_0\circ T_1
    \xrightarrow{\varepsilon_0\circ T_1} T_1\] equals the composition
    \[T_1\cong T_1\circ T_1\xrightarrow{\varepsilon_1\circ T_1}\Id_{\mathcal C}\circ T_1=T_1\]
    that in turn equals $\Id_{T_1}$. Conversely, the composition
    \[T_0\circ T_1\xrightarrow{\varepsilon_0\circ T_1} T_1\cong T_1\circ T_1
    \xrightarrow{\chi\circ T_1}T_0\circ T_1\] equals the composition
    \[T_0\circ T_1\cong T_0\circ T_1\circ T_1\xrightarrow{T_0\circ\chi\circ T_1}
    T_0\circ T_0\circ T_1\cong T_0\circ T_1\] that in turn equals $\Id_{T_0\circ T_1}$.

    This completes the proof of (b), but we would like to give one more independent argument
    that will prove useful later on.

\medskip

  Recall that we have to prove
  the commutativity of the big curved quadrangle. To this end we change the setting to the
  base field $\BF_q$ as in the proof of Lemma~\ref{formality}. That is we replace
  $D_{!*\bG_\bO}(\Gr\times\oV)$ by the equivalent equivariant derived category of sheaves on
  $(\Gr\times\oV)_{\ol\BF_q}$ as in~\cite[Proposition~5]{bf} (in particular, choosing an
  isomorphism $\BC\simeq\ol\BQ_\ell$). However, we preserve the notation $D_{!*\bG_\bO}(\Gr\times\oV)$
  for this category in order not to overload our notation (anyway, it will only be used during
  the current proof). All the irreducible perverse
  sheaves $\IC_{(\blambda,\bmu)}$ carry a natural Tate Weil structure by~\cite[Proposition~11]{fgt}.
  They (along with their Tate twists) generate a subcategory\footnote{closed with respect to
  taking cones and direct summands} $\widehat{D}_{!*\bG_\bO}(\Gr\times\oV)$
  of the mixed version $D^{\on{mix}}_{!*\bG_\bO}(\Gr\times\oV)$ of $D_{!*\bG_\bO}(\Gr\times\oV)$.
  We will use a particular dg-model of $\widehat{D}_{!*\bG_\bO}(\Gr\times\oV)$ viewed as a
  category enriched over complexes equipped with an action of the Frobenius automorphism $\on{Fr}$.
  Note that the absolute values of the eigenvalues of $\on{Fr}$ lie in $\sqrt{q}^\BZ$, and
  hence our complexes carry an additional grading according to the absolute values of the
  eigenvalues of $\on{Fr}$. If we forget the mixed structure and remember only this additional
  grading, we obtain a category $\widetilde{\ul D}{}_{!*\bG_\bO}(\Gr\times\oV)$ enriched over complexes
  equipped with an additional grading. Its localization with respect to quasiisomorphisms
  will be denoted $\widetilde{D}_{!*\bG_\bO}(\Gr\times\oV)$.

  On the other hand, we consider the category $\ul{D}{}_\perf^{G_{\bar0}\times\BC^\times}(\fG^\bullet_{1,1})$
  of perfect $G_{\bar0}\times\BC^\times$-equivariant dg-$\fG^\bullet_{1,1}$-modules and its
  localization (with respect to quasi-isomorphisms) $D_\perf^{G_{\bar0}\times\BC^\times}(\fG^\bullet_{1,1})$.
  Here all the generators of $\fG^\bullet_{1,1}$ have weight 1 with respect to the action of
  $\BC^\times$. Then the standard modification of our construction of the equivalence
  $\Phi^{1,1}\colon D^{G_{\bar0}}_\perf(\fG_{1,1}^\bullet)\iso D_{!*\bG_\bO}(\Gr\times\oV)$ produces a functor
  $\widetilde{\ul\Phi}{}^{1,1}\colon \ul{D}{}^{G_{\bar0}\times\BC^\times}_\perf(\fG_{1,1}^\bullet)\to
  \widetilde{\ul D}{}_{!*\bG_\bO}(\Gr\times\oV)$ and its localization
  $\widetilde{\Phi}{}^{1,1}\colon D^{G_{\bar0}\times\BC^\times}_\perf(\fG_{1,1}^\bullet)\iso
  \widetilde{D}{}_{!*\bG_\bO}(\Gr\times\oV)$.

  We have the similar diagram of morphisms of functors
  $D^{G_{\bar0}\times\BC^\times}_\perf(\fG_{1,1}^\bullet)\to\widetilde{D}{}_{!*\bG_\bO}(\Gr\times\oV)$
  with commutative upper and lower rectangles
  \[
  \xymatrix{
  \bar\jmath_{-1*}\bar\jmath{}_{-1}^!\circ\widetilde{\Phi}{}^{1,1} \ar[r]^-\sim \ar[d] \ar@/_2pc/[dd]
  &  \widetilde{\Phi}{}^{1,1}\circ\big(\det V_1\otimes(p_*q^*)\circ(\det\!{}^{-1}V'_1\otimes{-})\big)
    \ar[d] \ar@/^2pc/[dd]^{\cdot\det A''} \\
    \widetilde{\Phi}{}^{1,1} \ar[r]^\Id & \widetilde{\Phi}{}^{1,1}   \\
    \bar\jmath_{0*}\bar\jmath{}_0^!\circ\widetilde{\Phi}{}^{1,1} \ar[r]^\sim \ar[u] &
    \widetilde{\Phi}{}^{1,1}\circ p_*q^*, \ar[u]
  }
  \]
  and we have to prove the commutativity of the big curved quadrangle.
  The endofunctors $\det V_1\otimes(p_*q^*)\circ(\det\!{}^{-1}V'_1\otimes{-}),\ \Id,\ p_*q^*$ of
  $D^{G_{\bar0}\times\BC^\times}_\perf(\fG_{1,1}^\bullet)$ are given by their respective kernels
  $K_1,K_2,K_3$ in
  $\ul{D}{}^{G_{\bar0}^2\times\BC^\times}_\perf(\fG_{1,1}^\bullet\otimes\fG_{1,1}^\bullet)$.
  Note that the equivariance with respect to $\BC^\times$ (as opposed to $(\BC^\times)^2$)
  suffices since all the three functors under consideration commute with the shifts of
  the additional grading. Similarly, all the three functors on the constructible side
  commute with the Tate twists. All the three
  kernels are {\em pure} of weight 0, that is, their additional gradings coincide with their
  cohomological gradings. The category of pure weight 0 objects in
  $D^{G_{\bar0}^2\times\BC^\times}_\perf(\fG_{1,1}^\bullet\otimes\fG_{1,1}^\bullet)$ is equivalent to the
  abelian category
  of $G_{\bar0}^2\times\BC^\times$-equivariant $\fG_{1,1}\otimes\fG_{1,1}$-modules
  (the equivalence being obtained by taking cohomology).
  Therefore, the morphisms of functors
  $(\det V_1\otimes{-})\circ(p_*q^*)\circ(\det\!{}^{-1}V'_1\otimes{-})\to\Id$ and $p_*q^*\to\Id$ arise
  from the morphisms between the respective kernels that are injective as morphisms of
  $\fG_{1,1}\otimes\fG_{1,1}$-modules. We have to compare certain morphisms to
  $\widetilde{\ul\Phi}{}^{1,1}K_3$, and we know that their compositions with the
  monomorphism $\widetilde{\ul\Phi}{}^{1,1}K_3\to\widetilde{\ul\Phi}{}^{1,1}K_2$ coincide,
  hence the desired equality of morphisms. This completes our second proof of (b).

  \medskip

(c) follows from the comparison of the distinguished triangles
  \[(\bar\jmath_{-1*}\bar\jmath{}_{-1}^!\to\bar\jmath_{0*}\bar\jmath{}_0^!\to\jmath_{0*}\jmath_0^!)
  \circ\Phi^{1,1}\] and
  \[\Phi^{1,1}\circ\big(\det V_1\otimes (p_*q^*)\circ(\det\!{}^{-1}V'_1\otimes{-})
  \to p_*q^*\to p_{0*}q_0^*\big).\]

  The proposition is proved.
\end{proof}

\subsection{Some invariant theory}
\label{inth}
Let $U_1,U_2,U_3$ be vector spaces of dimensions $n_1,n_2,n_3$. The irreducible polynomial
(resp.\ antipolynomial) representations of $\GL(U_i)$ are realized in the Schur spaces
$\BS_\blambda U_i$ (resp.\ $\BS_\blambda U_i^*$), where
$\blambda$ is a partition with $\ell(\blambda)\leq n_i$. We will also write
$\BS_{\blambda^*}U_i$ for $\BS_\blambda U_i^*$, where
\[\blambda^*=-w_0\blambda=(-\lambda_{n_i},-\lambda_{n_i-1},\ldots,-\lambda_2,-\lambda_1)\] for
$\blambda=(\lambda_1,\ldots,\lambda_{n_i})$. We also set $\BS_\blambda U_i=0=\BS_{\blambda^*}U_i$
for $\ell(\blambda)>n_i$. We denote by $\Hom_{\leq n_2}(U_1,U_3)\subset\Hom(U_1,U_3)$ the
subvariety formed by all the homomorphisms of rank $\leq n_2$.

\begin{lem}
  \label{IT}
  The composition of homomorphisms induces the following isomorphisms of
  $\GL(U_1)\times\GL(U_3)$-modules:

  \textup{(a)} $\BC[\Hom_{\leq n_2}(U_1,U_3)]\iso
  \big(\BC[\Hom(U_1,U_2)]\otimes\BC[\Hom(U_2,U_3)]\big)^{\GL(U_2)}$.

  \textup{(b)} In case $n_1\leq n_2$, for a partition $\bnu$,
  \[\BS_\bnu U_1\otimes\BC[\Hom(U_1,U_3)]\iso
  \big(\BC[\Hom(U_1,U_2)]\otimes\BS_\bnu U_2\otimes\BC[\Hom(U_2,U_3)]\big)^{\GL(U_2)}.\]

  \textup{(c)} In case $n_3\leq n_2$, for a partition $\bnu$,
  \[\BC[\Hom(U_1,U_3)]\otimes\BS_\bnu U^*_3\iso
  \big(\BC[\Hom(U_1,U_2)]\otimes\BS_\bnu U^*_2\otimes\BC[\Hom(U_2,U_3)]\big)^{\GL(U_2)}.\]
\end{lem}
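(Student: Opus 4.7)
The plan is to reduce the three parts of the lemma to a single sub-identity plus routine Cauchy/Littlewood--Richardson bookkeeping.

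The key sub-lemma I will establish first is that, for any partition $\bnu$ with $\ell(\bnu)\le\min(n_1,n_2)$, the natural map
\[
\iota_\bnu\colon \BS_\bnu U_1 \longrightarrow \bigl(\BC[\Hom(U_1,U_2)] \otimes \BS_\bnu U_2\bigr)^{\GL(U_2)},\quad v \mapsto \bigl(A \mapsto \BS_\bnu(A)\,v\bigr),
\]
is an isomorphism of $\GL(U_1)$-modules, with an entirely parallel dual statement $\BS_\bnu U_3^* \iso (\BC[\Hom(U_2,U_3)]\otimes \BS_\bnu U_2^*)^{\GL(U_2)}$ when $\ell(\bnu)\le\min(n_2,n_3)$. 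This is an instance of Howe $(\GL(U_1),\GL(U_2))$-duality on $\Sym(U_1\otimes U_2^*)$: applying Cauchy's formula $\BC[\Hom(U_1,U_2)] = \bigoplus_\bla \BS_\bla U_1 \otimes \BS_\bla U_2^*$ (over $\ell(\bla)\le\min(n_1,n_2)$), the $\GL(U_2)$-invariants of $\BS_\bla U_2^* \otimes \BS_\bnu U_2$ are nonzero only for $\bla=\bnu$, where they are canonically $\BC$, and naturality of Cauchy in $A$ identifies the collapsed sum with $\iota_\bnu$.

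Part (a) then follows from the classical first fundamental theorem of invariant theory: the composition map $\mu\colon \Hom(U_1,U_2)\times \Hom(U_2,U_3)\to \Hom(U_1,U_3)$ is $\GL(U_2)$-invariant and surjects onto $\Hom_{\le n_2}(U_1,U_3)$, since every rank-$\le n_2$ homomorphism factors through $U_2$. Equivalently, expanding both sides by Cauchy yields $\bigoplus_{\ell(\bla)\le\min(n_1,n_2,n_3)} \BS_\bla U_1 \otimes \BS_\bla U_3^*$; the ideal of $\Hom_{\le n_2}$ in $\Hom(U_1,U_3)$ is cut out by $(n_2+1)$-minors, which in Schur language are the summands with $\ell(\bla)\ge n_2+1$. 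The map $\mu^*$ realizes the identification summand by summand because it is $\GL(U_1)\times\GL(U_3)$-equivariant and nonzero on each (by multiplicity one).

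For part~(b), I combine the sub-lemma with one extra Cauchy decomposition. Writing $\BC[\Hom(U_2,U_3)] = \bigoplus_\bmu \BS_\bmu U_2 \otimes \BS_\bmu U_3^*$ and expanding $\BS_\bnu U_2\otimes \BS_\bmu U_2 = \bigoplus_\bla c^\bla_{\bnu,\bmu}\BS_\bla U_2$ by Littlewood--Richardson, the sub-lemma gives
\[
\bigl(\BC[\Hom(U_1,U_2)] \otimes \BS_\bnu U_2 \otimes \BC[\Hom(U_2,U_3)]\bigr)^{\GL(U_2)} = \bigoplus_{\bla,\bmu} c^\bla_{\bnu,\bmu}\,\BS_\bla U_1 \otimes \BS_\bmu U_3^*,
\]
which matches the Cauchy + Littlewood--Richardson expansion $\BS_\bnu U_1 \otimes \BC[\Hom(U_1,U_3)] = \bigoplus_{\bla,\bmu} c^\bla_{\bnu,\bmu}\BS_\bla U_1 \otimes \BS_\bmu U_3^*$ term by term. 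The hypothesis $n_1 \le n_2$ is used precisely to ensure that the sub-lemma's constraint $\ell(\bla)\le\min(n_1,n_2)=n_1$ matches the natural indexing on the LHS; without it, the RHS would be missing the summands with $n_2<\ell(\bla)\le n_1$. The candidate isomorphism, obtained by composing $\iota_\bnu\otimes\mu^*$ with multiplication in $\BC[\Hom(U_1,U_2)]$, realizes this identification. Part~(c) is entirely symmetric, using the dual sub-lemma together with $n_3\le n_2$.

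The main obstacle is not the character match but verifying that the explicit candidate map is actually the Schur/Littlewood--Richardson isomorphism implied by the abstract count: since the $\GL(U_1)\times\GL(U_3)$-multiplicities $c^\bla_{\bnu,\bmu}$ can exceed one, equivariance alone does not pin the map down on each isotypic component. I would handle this by arguing that the candidate factors as $\iota_\bnu$ tensored with the Cauchy decomposition of $\BC[\Hom(U_2,U_3)]$, at the level of $\BC[\Hom(U_1,U_3)]$-modules before taking $\GL(U_2)$-invariants, thereby reducing everything to the multiplicity-free case of the sub-lemma itself, where an equivariant nonzero map is automatically the canonical one.
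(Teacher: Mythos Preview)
Your argument for~(a) is essentially the paper's: Cauchy on each factor, Schur orthogonality under $\GL(U_2)$, and then pinning down the map. The paper uses dominance of the composition morphism to get injectivity rather than your multiplicity-one remark, but either works.

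For~(b) and~(c) the paper takes a different and cleaner route than your Littlewood--Richardson computation. It introduces a copy $U'_2$ of $U_2$, tensors both sides of~(b) with $\BS_{\bnu^*}U'_2$, and sums over all partitions $\bnu$ with $\ell(\bnu)\le n_2$. By Cauchy this rebuilds $\BC[\Hom(U_2,U'_2)]$ on the right and $\BC[\Hom(U_1,U'_2)]$ on the left, and the resulting statement is precisely part~(a) with $U_3$ replaced by $U_3\oplus U'_2$; the hypothesis $n_1\le n_2$ then gives $\Hom_{\le n_2}(U_1,U_3\oplus U'_2)=\Hom(U_1,U_3\oplus U'_2)$. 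This sidesteps the multiplicity obstacle you flag, since~(a) is multiplicity-free.

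Your route is also correct, but the resolution you sketch for the multiplicity issue is the least precise part of the proposal. What makes it work is that $\bla\mapsto\iota_\bla$ is a natural isomorphism of polynomial functors in the vector-space variable, so applying the decomposition $\BS_\bnu\otimes\BS_\bmu\cong\bigoplus_\bla c^\bla_{\bnu,\bmu}\BS_\bla$ simultaneously to $U_1$ and $U_2$ intertwines your candidate map with $\bigoplus_\bla c^\bla_{\bnu,\bmu}\iota_\bla$, each summand of which is an isomorphism by your sub-lemma. An equivalent and perhaps cleaner fix, closer to the paper's style, is to prove injectivity of your candidate directly (the composition map $\Hom(U_1,U_2)\times\Hom(U_2,U_3)\to\Hom(U_1,U_3)$ is surjective when $n_1\le n_2$, so pulling back to a fixed injective $A''$ already separates points) and then invoke your character match.
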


\begin{proof}
  (a) We have $\BC[\Hom(U_i,U_j)]=\bigoplus_{m\geq0}\Sym^m(U_j^*\otimes U_i)=
  \bigoplus_\blambda\BS_\blambda U_i\otimes\BS_{\blambda^*} U_j$ as a $\GL(U_i)\times\GL(U_j)$-module.
  Also, $\BC[\Hom_{\leq r}(U_i,U_j)]=
  \bigoplus_{\ell(\blambda)\leq r}\BS_\blambda U_i\otimes\BS_{\blambda^*} U_j$ as a
  $\GL(U_i)\times\GL(U_j)$-module. Clearly,
  $(\BS_{\blambda^*}U_2\otimes\BS_\bmu U_2)^{\GL(U_2)}=\BC^{\delta_{\blambda\bmu}}$. So the two sides
  of~(a) are isomorphic as $\GL(U_1)\times\GL(U_3)$-modules. On the other hand, the morphism in question
  is injective since the composition morphism $\Hom(U_1,U_2)\times\Hom(U_2,U_3)\to\Hom_{\leq n_2}(U_1,U_3)$
  is dominant. Hence the morphism in question is an isomorphism.

  (b) We consider a copy $U'_2$ of $U_2$, we tensor both sides of (b) with $\BS_{\bnu^*}U'_2$,
  and take direct sum over all partitions $\bnu$ with $\ell(\bnu)\leq n_2$. Then we have
  to prove that the morphism (induced by the composition of arrows of the $D_4$-quiver
  in~(\ref{D4}))
  \begin{multline*}
    \label{red lemma}
  \gamma\colon \BC[\Hom(U_1,U_2')]\otimes\BC[\Hom(U_1,U_3)]\to\\
  \big(\BC[\Hom(U_1,U_2)]\otimes \BC[\Hom(U_2,U_2')]\otimes\BC[\Hom(U_2,U_3)]\big)^{\GL(U_2)}
  \end{multline*}
  is an isomorphism. 
  \begin{equation}
  \label{D4}
  \xymatrix{
    & \boxed{U_1} \ar@{-->}[dl] \ar[d] \ar@{-->}[dr]\\
    \boxed{U'_2} &*+=+<8pt>[o][F]+ {U_2} \ar[l] \ar[r] & \boxed{U_3}
}
  \end{equation}
  Now the statement can be reduced to~(a) using the substitution $U_3 \rightsquigarrow U_3\oplus U'_2$.
  Alternatively,
  the condition $n_1\leq n_2$ guarantees that the morphism from the representation space of the
  $D_4$-quiver to the representation space of the dashed $A_3$-quiver is dominant. Hence $\gamma$
  is injective. The surjectivity of $\gamma$ follows e.g.\ from~\cite[Theorem~1]{lp}.

  (c) is dual to (b).
\end{proof}

\subsection{The monoidal property of $\Phi^{2,0}$}
\label{20 monoidal}
Recall the notation of~Section~\ref{conv coh}.
The monoidal structure $\srel{A}*$ on $D^{G_{\bar0}}_\perf(\fG_{2,0}^\bullet)$ is defined via the
kernel $\BC[\CQ^A]_{2,0}^\bullet$: a $G_\CQ$-equivariant dg-$\BC[\CH]_{2,0}^\bullet$-module.
The monoidal structure $\srel!\oast$ on
$D_{!\bG_\bO}(\Gr\times\bV)$ transfered to $D^{G_{\bar0}}_\perf(\fG_{2,0}^\bullet)$ via the equivalence
$\Phi^{2,0}$ is also defined via a kernel $\CK^\bullet$ (a $G_\CQ$-equivariant
dg-$\BC[\CH]_{2,0}^\bullet$-module). We have to construct an isomorphism of
$G_\CQ$-equivariant dg-$\BC[\CH]_{2,0}^\bullet$-modules $\BC[\CQ^A]_{2,0}^\bullet\iso\CK^\bullet$.

We denote by $\BC[\CH_\loc]_{2,0}^\bullet$ the localization of $\BC[\CH]_{2,0}^\bullet$ defined as
$\BC[\CH]_{2,0}^\bullet[\det^{-1}\!\!A,\det^{-1}\!\!A',\det^{-1}\!\!A'']$. We define
$\CK_\loc^\bullet:=\BC[\CH_\loc]_{2,0}^\bullet\otimes_{\BC[\CH]_{2,0}^\bullet}\CK^\bullet$ and
$\BC[\CQ^A_\loc]_{2,0}^\bullet:=
\BC[\CH_\loc]_{2,0}^\bullet\otimes_{\BC[\CH]_{2,0}^\bullet}\BC[\CQ^A]_{2,0}^\bullet$.

We have $\Phi^{2,0}\fG_{2,0}^\bullet\simeq\varrho_\righ^{-1}E_0$.
Also, for $\CF\in D_{!\bG_\bO}(\Gr\times\bV)$ we have
$\varrho_\righ^{-1}E_0\srel!\oast\CF=\bar\jmath_{0*}\bar\jmath_0^!\CF$.
Thus~Proposition~\ref{corestriction}(a) yields an isomorphism of functors
$\Phi^{2,0}(\fG_{2,0}^\bullet\srel{A}*-)\iso(\Phi^{2,0}\fG_{2,0}^\bullet)\srel!\oast\Phi^{2,0}-$.
This isomorphism yields in turn an isomorphism of kernels
$\BC[\CQ_\forg^A]_{2,0}^\bullet\iso\CK_\forg^\bullet$, where the subscript $_\forg$ denotes
the restriction of the dg-module structure from $\BC[\CH]_{2,0}^\bullet$ to
$\BC[\CH_\forg]_{2,0}^\bullet:=\Sym(\Hom(V_1,V_2)[-2])\otimes\Sym(\Hom(V_2,V_1))
\otimes\Sym(\Hom(V'_1,V_2)[-2])\otimes\Sym(\Hom(V_2,V'_1))$.

According to~Proposition~\ref{corestriction}(b),
the following diagram of functors $D^{G_{\bar0}}_\perf(\fG_{2,0}^\bullet)\to D_{!\bG_\bO}(\Gr\times\bV)$
commutes:
\[\xymatrix{
  \bar\jmath_{-1*}\bar\jmath{}_{-1}^!\circ\Phi^{2,0} \ar[r]^-\sim \ar[d] &
  \Phi^{2,0}\circ\big(\det V_1\otimes
(p_*q^*)\circ(\det\!{}^{-1}V'_1\otimes{-})\big) \ar[d]^{\cdot\det A''} \\
\bar\jmath_{0*}\bar\jmath{}_0^!\circ\Phi^{2,0} \ar[r]^\sim & \Phi^{2,0}\circ p_*q^*.
}\]
Hence the diagram
\[\begin{CD}
\BC[\CQ_\forg^A]_{2,0}^\bullet @>{\sim}>> \CK_\forg^\bullet\\
@VV{\cdot\det A''}V @VV{\cdot\det A''}V\\
\BC[\CQ_\forg^A]_{2,0}^\bullet @>{\sim}>> \CK_\forg^\bullet
\end{CD}\]
commutes as well, and in particular the multiplication by $\det A''$ is injective on
$\CK^\bullet$, and hence $\CK^\bullet\hookrightarrow\CK_\loc^\bullet$.

Now since $\Phi^{2,0}_\loc\colon D^{G_{\bar0}}_\perf(\fA^\bullet)\iso
D_{\bG_\bO}(\Gr)\cong D_{!\bG_\bO}^\loc(\Gr\times\bV)$ (see~Lemma~\ref{restriction to zero}
and~Corollary~\ref{localized ext}) coincides with the equivariant Satake equivalence,
and the latter one is monoidal, we obtain
an isomorphism of localized kernels $\BC[\CQ_\loc^A]_{2,0}^\bullet\iso\CK_\loc^\bullet$
as $G_\CQ$-equivariant $\BC[\CH_\loc]_{2,0}^\bullet$-modules.
By the argument in the second proof of~Proposition~\ref{corestriction}(b) (using the additional
grading and purity of $\CK^\bullet$), it remains to verify that
this isomorphism restricts to the desired isomorphism from
$\BC[\CQ^A]_{2,0}^\bullet\subset\BC[\CQ_\loc^A]_{2,0}^\bullet$ to $\CK^\bullet\subset\CK_\loc^\bullet$.
For this verification it suffices to restrict the scalars to $\BC[\CH_\forg]_{2,0}^\bullet$.
But we have already seen that over $\BC[\CH_\forg]_{2,0}^\bullet$ we obtain an isomorphism
$\BC[\CQ_\forg^A]_{2,0}^\bullet\iso\CK_\forg^\bullet$.

This completes the proof of the monoidal property of $\Phi^{2,0}$.

\subsection{Fourier Transform}
\label{fourier}
We have the Fourier transform functors (along $\bV$) \[\FT\colon D_{!\bG_\bO}(\Gr\times\bV)\to
D_{*\bG_\bO}(\Gr\times\bV),\ \FT\colon D_{*\bG_\bO}(\Gr\times\bV)\to D_{!\bG_\bO}(\Gr\times\bV),\]
\[\FT\colon D_{!*\bG_\bO}(\Gr\times\bV)\to D_{!*\bG_\bO}(\Gr\times\bV).\]
Strictly speaking, the Fourier transform goes not to $D_{?\bG_\bO}(\Gr\times\bV)$, but to
$D_{?\bG_\bO}(\Gr\times(V^*\otimes\bF))$. However, we identify $V^*$ with $V$ using our choice
of (selfdual) basis $e_1,\ldots,e_N$, and accordingly change the action of $\bG_\bO=\GL(N,\bO)$
by composing it with the automorphism $g\mapsto{}^t\!g^{-1}$. Note that the resulting
Fourier transform to $D_{?\bG_\bO}(\Gr\times\bV)$ is independent of the choice of basis in $V$.

To describe the effect of $\FT$ on the coherent side, we identify $V_1\cong V_1^*$ and
$V_2\cong V_2^*$ using our bases. Furthermore, we identify
\[\Hom(V_1,V_2)\cong\Hom(V_2^*,V_1^*)\cong\Hom(V_2,V_1),\ A\mapsto B:=-{}^t\!\!A,\]
\[\Hom(V_2,V_1)\cong\Hom(V_1^*,V_2^*)\cong\Hom(V_1,V_2),\ B\mapsto A:={}^t\!B.\]
Thus we obtain an $\iota$-equivariant transposition isomorphism
\begin{multline*}\tau\colon\fG_{2,0}^\bullet=\Sym(\Hom(V_1,V_2))\otimes
  \Sym(\Hom(V_2,V_1)[-2])\\ \to\Sym(\Hom(V_1,V_2)[-2])\otimes\Sym(\Hom(V_2,V_1))=\fG_{0,2}^\bullet,
\end{multline*}
where $\iota\colon G_{\bar0}\to G_{\bar0}$ is an automorphism
$(g_1,g_2)\mapsto({}^t\!g_1^{-1},{}^t\!g_2^{-1})$.
We denote the extension of scalars via $\tau$ by
$\phitau\colon D^{G_{\bar0}}_\perf(\fG_{2,0}^\bullet)\to D^{G_{\bar0}}_\perf(\fG_{0,2}^\bullet).$
Clearly, the functor $\phitau\colon\big(D^{G_{\bar0}}_\perf(\fG_{2,0}^\bullet),\ \srel{A}*\big)\to
\big(D^{G_{\bar0}}_\perf(\fG_{0,2}^\bullet),\ \srel{B}*\big)$ is monoidal. Also, by the standard
properties of the Fourier transform, the functor
\[\FT\colon\big(D_{!\bG_\bO}(\Gr\times\bV),\ \srel!\oast\big)\to
\big(D_{*\bG_\bO}(\Gr\times\bV),\ \srel**\big)\] is monoidal.
Thus the monoidal property of $\Phi^{0,2}$ is a corollary of the following

\begin{prop}
  \label{FT}
  The functors 
  $\FT\circ\Phi^{2,0}\colon D^{G_{\bar0}}_\perf(\fG_{2,0}^\bullet)\to D_{*\bG_\bO}(\Gr\times\bV)$ and $\Phi^{0,2}\circ\phitau$
  are isomorphic.
\end{prop}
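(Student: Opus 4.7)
The plan is to reduce Proposition~\ref{FT} to the $!*$-level compatibility of $\Phi^{1,1}$ with Fourier transform, and then to verify the latter by matching $\FT$ with the analogous transposition autoequivalence on generators of the deequivariantized Ext algebra.

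Using the upper and lower commutative squares in Theorem~\ref{main spherical mirabolic}, one can write $\Phi^{2,0}=\varrho_\righ^{-1}\circ\Phi^{1,1}\circ\varrho_\righ$ and $\Phi^{0,2}=\varrho_\righ\circ\Phi^{1,1}\circ\varrho_\righ^{-1}$. Since the Fourier transform preserves the perverse $t$-structure while exchanging the $!$- and $*$-conventions for lattice supports, one obtains compatible natural isomorphisms $\FT\circ\varrho_\righ^{-1}\cong\varrho_\righ\circ\FT$ relating the three constructible categories. On the coherent side, the transposition $\tau$ and its extension-of-scalars functor $\phitau$ factor analogously as $\phitau=\varrho_\righ\circ\phitau_{1,1}\circ\varrho_\righ^{-1}$, where $\phitau_{1,1}$ is the autoequivalence of $D^{G_{\bar 0}}_\perf(\fG^\bullet_{1,1})$ induced by the same transposition (now viewed as an $\iota$-equivariant automorphism of $\fG^\bullet_{1,1}$ swapping the two odd summands of $\fg_{\bar 1}$). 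The proposition thus reduces to constructing a natural isomorphism $\FT\circ\Phi^{1,1}\cong\Phi^{1,1}\circ\phitau_{1,1}$.

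The next step is to compute $\FT(E_0)$, where $E_0=\Phi^{1,1}(\fG^\bullet_{1,1})=\IC_{(0^N,0^N)}$ is the perversely-normalized constant sheaf on $\Gr^0\times\bV_0$. The residue pairing on $\bV$, together with the self-dual basis of $V$, satisfies $\bV_0^\perp=\bV_0$; a direct computation of $\FT$ on each finite-dimensional quotient $\bV_0/t^n\bV_0$ therefore yields $\FT(E_0)\cong E_0$ in $\Perv_{\bG_\bO}(\Gr\times\bV)$, with its natural $G_{\bar 0}$-equivariant structure twisted by $\iota$. Consequently $\FT$ induces an automorphism of the formal Ext algebra $\fE^\bullet=\Ext^\bullet_{D^{\deeq}_{!*\bG_\bO}(\Gr\times\bV)}(E_0,E_0)$, and under the isomorphism $\phi^\bullet\colon\fG^\bullet_{1,1}\iso\fE^\bullet$ of Lemma~\ref{phi isom} it remains to identify this automorphism with $\phitau_{1,1}$.

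This identification reduces to tracking $\FT$ on the two degree-$1$ generators $h,h^*$ constructed in Section~\ref{calculation}. Recall that $h$ is the first Chern class of the normal line bundle to the smooth divisor inclusion $\on{supp}(E_0*\IC_{\omega_1})\subset\on{supp}(\IC_{\omega_1}*E_0)$, and $h^*$ arises from its transpose. Under $\FT$ and the self-dual identification $V\cong V^*$, the stratum defined by the condition ``$v\in L$'' (while $L$ contains $\bV_0$ as a hyperplane) is exchanged with the stratum defined by ``$v\in\bV_0$'', so $\FT$ swaps $h\leftrightarrow h^*$ up to the transposition sign --- which is precisely the prescription of $\phitau_{1,1}$ on generators. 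Since $\Phi^{1,1}$ is uniquely characterized up to canonical isomorphism by its action on $\fG^\bullet_{1,1}$ together with the induced isomorphism $\phi^\bullet$, and since both functors under comparison are $\Rep(\GL_N)\times\Rep(\GL_N)$-equivariantly monoidal, the matching on generators promotes to the required natural isomorphism of monoidal functors. The main obstacle is the explicit computation of $\FT(h)$ and $\FT(h^*)$, requiring careful tracking of signs and perverse normalizations in the semi-infinite Fourier setting; once this is carried out, the remainder follows formally from the construction of $\Phi^{1,1}$.
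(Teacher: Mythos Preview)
Your overall strategy is reasonable and shares the correct starting ingredient with the paper: the identification $\FT(E_0)\cong E_0$. However, the paper's proof is considerably more direct than what you propose, and your route leaves the crucial step undone.

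The paper does not pass through the $!*$-level or compute $\FT$ on the generators $h,h^*$ of the deequivariantized Ext algebra. Instead, it observes that the equivalences $\Phi^{2,0}$ and $\Phi^{0,2}$ are \emph{constructed} from two pieces of data: the object $E_0$, and the left/right convolution actions of the spherical Hecke category. Hence to compare $\FT\circ\Phi^{2,0}$ with $\Phi^{0,2}\circ\phitau$ it suffices to verify (i) $\FT(E_0)\cong E_0$ and (ii) an isomorphism
\[
\FT(\CA_\lef*\CF*\CA_\righ)\;\cong\;\iota(\CA_\lef)*\FT(\CF)*\iota(\CA_\righ)
\]
of tri-functors $D_{\bG_\bO}(\Gr)\times D_{!*\bG_\bO}(\Gr\times\bV)\times D_{\bG_\bO}(\Gr)\to D_{!*\bG_\bO}(\Gr\times\bV)$. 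The second isomorphism is checked directly from the definitions of $\FT$ and of the convolution diagrams~(\ref{left conv}),~(\ref{right conv}), together with the observation that $\iota$ on $D_{\bG_\bO}(\Gr)$ is induced by $L\mapsto L^\perp$. No computation on $h,h^*$ is needed.

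Your approach can in principle be completed, but note two issues. First, the assertion that $\FT$ ``exchanges the stratum $v\in L$ with the stratum $v\in\bV_0$'' is not correct as stated: $\FT$ acts only along $\bV$, sending the constant sheaf on $L$ fibrewise to that on $L^\perp$, and the exchange of strata only emerges \emph{after} one also untwists the $\bG_\bO$-action by $\iota$ (equivalently, applies $L\mapsto L^\perp$ on the $\Gr$-factor). Second, your final appeal to both functors being ``$\Rep(\GL_N)\times\Rep(\GL_N)$-equivariantly monoidal'' is misplaced: Proposition~\ref{FT} is not a statement about monoidal structures, and what you actually need is precisely the $\iota$-twisted Hecke-equivariance of $\FT$---which is exactly the paper's formula above. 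Once you state and verify that compatibility, the generator computation becomes superfluous, and you recover the paper's argument.
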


\begin{proof}
  Going over the construction of equivalences of $\Phi^{0,2},\Phi^{2,0}$, we see that it suffices
  to construct the isomorphisms $\FT(E_0)\iso E_0$ (evident), and
  \begin{equation}
    \label{isomor}
    \FT(-_\lef*\bullet*-_\righ)\iso\iota(-_\lef)*\FT(\bullet)*\iota(-_\righ)
  \end{equation}
  (left and right convolution functors
  \[D_{\bG_\bO}(\Gr)\times D_{!*\bG_\bO}(\Gr\times V)\times D_{\bG_\bO}(\Gr)\to D_{!*\bG_\bO}(\Gr\times V)).\]
  Here $\iota\colon D_{\bG_\bO}(\Gr)\to D_{\bG_\bO}(\Gr)$ is a monoidal autoequivalence induced by
  the automorphism $\iota\colon g\mapsto{}^t\!g^{-1}$ of $\bG_\bF$. Note that the Satake equivalence
  intertwines $\iota\colon D_{\bG_\bO}(\Gr)\to D_{\bG_\bO}(\Gr)$ with the same named autoequivalence
  of $\Rep(\GL_N)$. Also note that $\iota\colon D_{\bG_\bO}(\Gr)\to D_{\bG_\bO}(\Gr)$ is induced by
  the automorphism $L\mapsto L^\perp$ of $\Gr$. Here $L^\perp:=\{v\in\bV : (v,L)\in\bO\}$, and
  $(,)$ stands for the $\bF$-bilinear pairing on $\bV$ such that $(e_i,e_j)=\delta_{ij}$. Now the
  existence of the desired isomorphism~(\ref{isomor}) follows from the definitions of $\FT$ and
  convolutions~(\ref{left conv},~\ref{right conv}).
\end{proof}


\subsection{The monoidal property of $\Phi^{1,1}$}
\label{proof main}
The argument is very similar to the one of~Section~\ref{20 monoidal}.
We introduce
\begin{multline*}\BC[\CH]_{1,1}^\bullet=\Sym(\Hom(V_1,V_2)[-1])\otimes\Sym(\Hom(V_2,V_1)[-1])
  \otimes\Sym(\Hom(V'_1,V_2)[-1])\\
  \otimes\Sym(\Hom(V_2,V'_1)[-1])\otimes\Sym(\Hom(V_1,V'_1)[-1])\otimes\Sym(\Hom(V'_1,V_1)[-1]).
  \end{multline*}
Then the monoidal structure $\otimes_{\fG_{1,1}^\bullet}$ on $D^{G_{\bar0}}_\perf(\fG_{1,1}^\bullet)$
is defined via the kernel $\BC[\Delta]_{1,1}^\bullet$: the diagonal $G_{\bar0}$-equivariant
dg-$\fG_{1,1}^\bullet$-trimodule.
The fusion monoidal structure $\star$ on
$D_{!*\bG_\bO}(\Gr\times\bV)$ transferred to $D^{G_{\bar0}}_\perf(\fG_{1,1}^\bullet)$ via the equivalence
$\Phi^{1,1}$ is also defined via a kernel $\sK^\bullet$ (a $G_{\bar0}$-equivariant
dg-$\fG_{1,1}^\bullet$-trimodule).
Note that we have a different equivariant structure than for $\CK^\bullet$ (notation
of~\S\ref{20 monoidal}) because of the
different compatibility with the Hecke action. We have to construct an isomorphism of
$G_{\bar0}$-equivariant dg-$\fG_{1,1}^\bullet$-trimodules $\BC[\Delta]_{1,1}^\bullet\iso\sK^\bullet$.

We have $\Phi^{1,1}\fG_{1,1}^\bullet\simeq E_0$. Also,
we have an isomorphism of endofunctors $E_0\star-\cong\Id\colon D_{!*\bG_\bO}(\Gr\times\bV)\to
D_{!*\bG_\bO}(\Gr\times\bV)$. Thus we obtain an isomorphism of functors
$\Phi^{1,1}(\fG_{1,1}^\bullet\otimes_{\fG_{1,1}^\bullet}-)\iso(\Phi^{1,1}\fG_{1,1}^\bullet)\star\Phi^{1,1}-$.
This isomorphism yields an isomorphism of kernels $\BC[\Delta_\forg]_{1,1}^\bullet\iso\sK_\forg^\bullet$
(notation is explained in~Section~\ref{20 monoidal}).

The following diagram commutes:
\[\xymatrix{
\IC_{(-1^N,1^N)}\star\Phi^{1,1} \ar[r]^-\sim \ar[d] &
\Phi^{1,1}\circ\big(\det V_1\otimes-\otimes\det\!{}^{-1}V'_1\big) \ar[d]^{\cdot\det A''}\\
E_0\star\Phi^{1,1} \ar[r]^\sim & \Phi^{1,1}.
}\]
Hence the diagram
\[\begin{CD}
\BC[\Delta]_{1,1}^\bullet @>{\sim}>> \sK_\forg^\bullet\\
@VV{\cdot\det A''}V @VV{\cdot\det A''}V\\
\BC[\Delta]_{1,1}^\bullet @>{\sim}>> \sK_\forg^\bullet
\end{CD}\]
commutes as well, and the rest of the argument proceeds just as in~Section~\ref{20 monoidal}.

This completes the proof of the monoidal property of $\Phi^{1,1}$ along
with~Theorem~\ref{main spherical mirabolic}.

\section{A coherent realization of $D_{\GL(N-1,\bO)}(\Gr)$}
\label{coh realization N-1}

\subsection{Notation}
We consider a complex vector space $\ol{V}\!_1$ with a basis $e_2,e_3,\ldots,e_N$.
We consider the Lie superalgebra $\fg\fl(N-1|N)=\fg\fl(\ol{V}\!_1\oplus\Pi V_2)$.
We have a decomposition $\fg\fl(N-1|N)=\ol\fg_{\bar0}\oplus\ol\fg_{\bar1}$, where
$\ol\fg_{\bar1}=\Pi\Hom(\ol{V}\!_1,V_2)\oplus\Pi\Hom(V_2,\ol{V}\!_1)$, and
$\ol\fg_{\bar0}=\End(\ol{V}\!_1)\oplus\End(V_2)$. We set
$\ol{G}_{\bar0}=\GL(\ol{V}\!_1)\times\GL(V_2)$.
We consider the dg-algebra $\ol\fG{}^\bullet=\Sym(\ol\fg_{\bar1}[-1])$
with trivial differential,
and the triangulated category $D^{\ol{G}_{\bar0}}_\perf(\ol\fG{}^\bullet)$ obtained by localization
(with respect to quasi-isomorphisms) of the category of perfect $\ol{G}_{\bar0}$-equivariant
dg-$\ol\fG{}^\bullet$-modules.
Finally, we set $\ol\bG_\bO=\GL(N-1,\bO)$.

\begin{thm}
  \label{GL(N-1)}
  There exists an equivalence of triangulated categories
  $\ol\Phi\colon D^{\ol{G}_{\bar0}}_\perf(\ol\fG{}^\bullet)\iso D_{\ol\bG_\bO}(\Gr)$
  commuting with the left convolution action of the monoidal
  spherical Hecke category $\Perv_{\GL(N-1,\bO)}(\Gr_{N-1})\cong\Rep(\GL_{N-1})$ and with
the right convolution action of the monoidal
  spherical Hecke category $\Perv_{\GL(N,\bO)}(\Gr_N)\cong\Rep(\GL_N)$.
\end{thm}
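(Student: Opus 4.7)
My plan is to follow the strategy used for $\Phi^{1,1}$ in Theorem~\ref{main spherical mirabolic}, with the mirabolic affine Grassmannian replaced by $\Gr=\Gr_{\GL_N}$ and $\bG_\bO$-equivariance replaced by $\ol\bG_\bO$-equivariance. The key input is again a two-sided Hecke action, now by $\Rep(\GL_{N-1})$ on the left and $\Rep(\GL_N)$ on the right. First I would form the bi-deequivariantization $D^{\deeq}_{\ol\bG_\bO}(\Gr)$ with respect to these two actions, choose the generator $\ol E_0:=\IC_0\in D_{\ol\bG_\bO}(\Gr)$ at the base point, and set $\ol\fE^\bullet:=\Ext^\bullet_{D^{\deeq}_{\ol\bG_\bO}(\Gr)}(\ol E_0,\ol E_0)$. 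Formality of $\RHom^\bullet$ between these generators follows from the Frobenius-purity and hyperbolic-restriction argument of Lemma~\ref{formality}, applied here to the loop-rotation action on $\Gr$ (whose fixed points are the discrete translates $t^\bnu$). Combined with \cite[Propositions~5,6]{bf}, this gives an equivalence $D^{\ol G_{\bar0}}_\perf(\ol\fE^\bullet)\iso D_{\ol\bG_\bO}(\Gr)$ as in Lemma~\ref{purity}; here $\ol E_0$ is a bi-Hecke generator because every $\ol\bG_\bO$-orbit on $\Gr$ can be reached from $[e]$ by a suitable pair of Hecke modifications.

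The main technical step is to construct a $\ol G_{\bar0}$-equivariant isomorphism of graded algebras $\ol\phi^\bullet\colon\ol\fG{}^\bullet\iso\ol\fE^\bullet$. Imitating Section~\ref{calculation}, I would produce degree-one classes $\bar h\in\Ext^1(\IC_{\omega_1}*\ol E_0,\ol E_0*\IC_{\omega_1})$ and $\bar h^*\in\Ext^1(\ol E_0*\IC_{\omega_1},\IC_{\omega_1}*\ol E_0)$, using the left action of the minuscule $\IC_{\omega_1}$ for $\GL_{N-1}$ together with the right action of the minuscule $\IC_{\omega_1}$ for $\GL_N$ and the resulting pair of divisor classes. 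These cut out subspaces $\Hom(\ol V_1,V_2),\,\Hom(V_2,\ol V_1)\subset\ol\fE^1$. Commutativity and integrality of $\ol\fE^\bullet$ (by an analog of Corollary~\ref{comm-int}) then assemble these into an algebra map $\ol\phi^\bullet$. To show $\ol\phi^\bullet$ is an isomorphism I would introduce two independent localizations of $D^{\deeq}_{\ol\bG_\bO}(\Gr)$ in analogy with Corollary~\ref{localized ext}, on which $\ol\phi^\bullet$ becomes respectively the equivariant Satake equivalence of \cite[Theorem~5]{bf} for $\GL_{N-1}$ and for $\GL_N$; a codimension-two gluing via Lemma~\ref{Roma's} promotes these local isomorphisms to a global one.

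The hard part will be constructing the two localizations intrinsically: the mirabolic setting had the obvious scaling operator $\alpha\colon v\mapsto tv$ on $\bV$, whose direct analog on $\Gr$ alone is not available. A natural substitute is the identification $D_{\ol\bG_\bO}(\Gr)\iso D_{\bG_\bO}(\Gr\times(\bV_0\smallsetminus t\bV_0))$, coming from the fact that $\bV_0\smallsetminus t\bV_0$ is the open $\bG_\bO$-orbit of $e_1$ with stabilizer $\ol\bG_\bO$ modulo a pro-unipotent group that acts trivially on constructible sheaves. Through this identification I can pull back the $\alpha$-localizations of Section~\ref{localization} and combine them with Proposition~\ref{corestriction}(c) and the decomposition $\fG^\bullet_{1,1}=\Sym((V_2\oplus V_2^*)[-1])\otimes\ol\fG{}^\bullet$ coming from $V_1=\langle e_1\rangle\oplus\ol V_1$, to compute the two localized Ext algebras explicitly and match them with the localizations of $\ol\fG{}^\bullet$ needed to apply Lemma~\ref{Roma's}. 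Finally, the bi-Hecke equivariance of the resulting equivalence $\ol\Phi$ is automatic from its construction.
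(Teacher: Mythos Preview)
There is a genuine gap in the identification $D_{\ol\bG_\bO}(\Gr)\cong D_{\bG_\bO}(\Gr\times(\bV_0\smallsetminus t\bV_0))$ on which your localization strategy rests. The stabilizer of $e_1\in\bV_0\smallsetminus t\bV_0$ in $\bG_\bO$ is the mirabolic subgroup $\bM_\bO$, and its pro-unipotent radical $\bU_\bO$ does \emph{not} act trivially on constructible sheaves on $\Gr$: already on $\Gr^{\omega_1}\cong\BP^{N-1}$ the radical acts through the nontrivial action of the finite unipotent group on $\BP(V)$. Hence $D_{\bG_\bO}(\Gr\times(\bV_0\smallsetminus t\bV_0))\cong D_{\bM_\bO}(\Gr)$ is only a proper full subcategory of $D_{\ol\bG_\bO}(\Gr)$. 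Passing from $\bM_\bO$- to $\GL(N-1,\bO)$-equivariance is a separate, nontrivial step. A second issue is that your codimension-two gluing via Lemma~\ref{Roma's} requires two open sets on which the localized Ext algebra matches derived Satake for $\GL_{N-1}$ and $\GL_N$; but since $\dim\ol V_1\neq\dim V_2$ there is no ``isomorphism locus'' in $\Pi\ol\fg{}^*_{\bar1}$, and you have not said what replaces $\fA^\bullet,\fB^\bullet$ nor what constructible-side localizations would produce them.

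The paper's route is quite different from a direct Ext-algebra computation. It \emph{bootstraps} from the already-established mirabolic equivalence $\Phi^{1,1}$ of Theorem~\ref{main spherical mirabolic}. Via Proposition~\ref{corestriction}(c), the open-stratum functor $\jmath_{0*}\jmath_0^!$ corresponds on the coherent side to an explicit idempotent monad $p_{0*}q_0^*$ on $D_\perf^{G_{\bar0},\geq}(\fG_{1,1}^\bullet)$; its Kleisli category is therefore equivalent to $D_{\bM_\bO}(\Gr)$. Classical invariant theory (Lemma~\ref{IT}, applied in~\S\ref{cohmirres}) then identifies this Kleisli category with the full subcategory $D^{\ol G_{\bar0},\geq0}_\perf(\ol\fG{}^\bullet)$ generated by free modules with polynomial $\GL(\ol V_1)$-action. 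Finally, the $\bM_\bO$-to-$\ol\bG_\bO$ step is handled by the shift $\xi=\on{diag}(1,t,\ldots,t)$: Lemma~\ref{loca} shows that $\Res_{\bM_\bO}^{\GL(N-1,\bO)}$ becomes an equivalence after passing to $\on{colim}_{\xi_*}$, and on the coherent side $\xi_*$ matches tensoring by $\det\ol V_1$, whose colimit removes the ``$\geq0$'' restriction. Thus the paper avoids both the missing localizations and the $\bU_\bO$-equivariance problem by reducing to the mirabolic theorem rather than rerunning its proof.
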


The proof will be given in~Section~\ref{coda} after some preparations
in~\S\S\ref{mirres},\ref{cohmirres}.

\bigskip

Similarly to~Section~\ref{koszul}, we define
$\ol{\Lambda}:=\Lambda\big(\Hom(\ol{V}\!_1,V_2)\oplus\Hom(V_2,\ol{V}\!_1)\big)$.
We consider the derived category $D_\fid^{\ol{G}_{\bar0}}(\ol{\Lambda})$ of finite dimensional
complexes of $\ol{G}_{\bar0}\ltimes\ol\Lambda$-modules. We have the Koszul equivalence functors
\[\ol{\varkappa}\colon D_\fid^{\ol{G}_{\bar0}}(\ol{\Lambda})\iso D^{\ol{G}_{\bar0}}_\perf(\ol\fG{}^\bullet),\
SD_\fid^{\ol{G}_{\bar0}}(\ol{\Lambda})\iso SD^{\ol{G}_{\bar0}}_\perf(\ol\fG{}^\bullet).\]
We also consider the degeneration $\ul\fgl(N-1|N)$ of the Lie superalgebra $\fgl(N-1|N)$
(defined as in~\S\ref{degen}),
and the derived category of bounded complexes of integrable $\ul\fgl(N-1|N)$-modules
$SD_{\on{int}}(\ul\fgl(N-1|N))\cong SD_\fid^{\ol{G}_{\bar0}}(\ol{\Lambda})$.
The following corollary of Theorem~\ref{GL(N-1)} is proved just like~Corollary~\ref{exact spherical}.

\begin{cor}
  \label{exact GL(N-1)}
  \textup{(a)} The composed equivalence
  \[\ol{\Phi}\circ\ol{\varkappa}\colon 
  D_\fid^{\ol{G}_{\bar0}}(\ol{\Lambda})\iso D_{\ol\bG_\bO}(\Gr)\]
  is exact with respect to the tautological $t$-structure on
  $D_\fid^{\ol{G}_{\bar0}}(\ol{\Lambda})$
  and the perverse $t$-structure on $D_{\ol\bG_\bO}(\Gr)$.

  \textup{(b)} This equivalence is monoidal with respect to the tensor structure on
  $D_\fid^{\ol{G}_{\bar0}}(\ol{\Lambda})$ and the fusion $\star$ on $D_{\ol\bG_\bO}(\Gr)$.

  \textup{(c)} The equivariant derived category $D_{\ol\bG_\bO}(\Gr)$ is equivalent to the
  bounded derived category of the abelian category $\Perv_{\ol\bG_\bO}(\Gr)$.
\end{cor}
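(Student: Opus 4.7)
The plan is to mirror the proof of Corollary~\ref{exact spherical}, adapted to the asymmetric $(N-1|N)$-setting with $\ol G_{\bar 0} = \GL(\ol V_1)\times\GL(V_2)$ and $\ol\bG_\bO = \GL(N-1,\bO)$. Three structural identifications drive the argument: (i) by construction, $\ol\varkappa$ sends an irreducible $\ol G_{\bar 0}$-module $V_{\ol 1,\blambda}\otimes V_{2,\bmu}$ --- viewed as an $\ol G_{\bar 0}\ltimes\ol\Lambda$-module on which $\ol\Lambda$ acts trivially --- to the free module $V_{\ol 1,\blambda}\otimes\ol\fG{}^\bullet\otimes V_{2,\bmu}$; (ii) the $\ol\bG_\bO$-orbits in $\Gr_{\GL_N}$ are parametrized by pairs $(\blambda,\bmu)$ where $\blambda$ is a signature of length $N-1$ and $\bmu$ is a signature of length $N$, the natural analogue of~\cite[Proposition~8]{fgt}; (iii) by the construction of $\ol\Phi$, which parallels that of $\Phi^{1,1}$ via the deequivariantized $\Ext$-algebra of the unit object in $D_{\ol\bG_\bO}(\Gr_{\GL_N})$, one verifies that $\ol\Phi(V_{\ol 1,\blambda}\otimes\ol\fG{}^\bullet\otimes V_{2,\bmu})$ is the IC sheaf of the $\ol\bG_\bO$-orbit indexed by $(\blambda^*,\bmu^*)$.

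Given these identifications, part (a) follows because $\ol\Phi\circ\ol\varkappa$ sends each simple object of the tautological heart $\Rep(\ol G_{\bar 0}\ltimes\ol\Lambda)$ to a perverse IC sheaf in $\Perv_{\ol\bG_\bO}(\Gr_{\GL_N})$, and these IC sheaves generate the perverse heart; a standard extension-closure argument then yields t-exactness of the equivalence. Part (b) combines the monoidality of $\ol\varkappa$, obtained verbatim from the superization construction in Section~\ref{koszul} applied to the $(N-1|N)$ case, with the fusion-monoidality of $\ol\Phi$. The latter I would prove by replicating the kernel-comparison argument of Section~\ref{proof main}: the fusion product transported to the coherent side is represented by an $\ol G_{\bar 0}$-equivariant dg-$\ol\fG{}^\bullet$-trimodule kernel $\ol\sK^\bullet$, which one identifies with the diagonal trimodule by exploiting the isomorphism $\ol\Phi(\ol\fG{}^\bullet)\star(-)\cong\Id$, the Hecke covariance already supplied by Theorem~\ref{GL(N-1)}, and a uniqueness argument via purity and the auxiliary weight grading (as in the second proof of Proposition~\ref{corestriction}(b)). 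Part (c) is then formal: since $D_\fid^{\ol G_{\bar 0}}(\ol\Lambda)$ is tautologically the bounded derived category of its heart $\Rep(\ol G_{\bar 0}\ltimes\ol\Lambda)$, the t-exact equivalence of (a) transports this property to identify $D_{\ol\bG_\bO}(\Gr_{\GL_N})$ with $D^b(\Perv_{\ol\bG_\bO}(\Gr_{\GL_N}))$.

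The principal obstacle I anticipate is the clean matching in (iii) above. Establishing that $\ol\Phi$ sends the free modules produced by $\ol\varkappa$ precisely onto the irreducible perverse IC sheaves --- with no auxiliary cohomological or parity shift --- requires carrying through, for $\ol\bG_\bO$-orbits on $\Gr_{\GL_N}$, the formality and pointwise-purity technology of Lemma~\ref{formality} together with the explicit $\Ext$-algebra computation of Section~\ref{calculation}. The dimensional asymmetry $\dim\ol V_1 = N-1 \neq N = \dim V_2$ forces a non-symmetric variant of the $D_4$-type invariant-theory step of Lemma~\ref{IT} and a corresponding adjustment of the Shoji-order combinatorics controlling orbit closures. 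Once this matching is in place, the remainder of the argument is a direct transcription of the mirabolic case and should present no new difficulties.
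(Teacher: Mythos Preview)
Your overall strategy matches the paper's terse ``proved just like Corollary~\ref{exact spherical}'', and the logical skeleton you give for (a) and (c) is correct. However, you mischaracterize the construction of $\ol\Phi$: it is \emph{not} built by a fresh deequivariantized Ext-algebra computation parallel to that of $\Phi^{1,1}$. Rather, it is bootstrapped from $\Phi^{1,1}$ itself via the chain
\[
D_\perf^{\ol G_{\bar0},\geq0}(\ol\fG{}^\bullet)\xleftarrow[\sim]{\varPhi} D(p_{0*}q_0^*)\xrightarrow[\sim]{\Phi'} D_{\bM_\bO}(\Gr)
\]
followed by passage to colimits under $\eta$ and $\xi_*$ (Sections~\ref{cohmirres}--\ref{coda}). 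Consequently the identification (iii) you need --- that $\ol\Phi(\ol V_{1,\blambda}\otimes\ol\fG{}^\bullet\otimes V_{2,\bmu})$ is a perverse IC sheaf --- is \emph{inherited} from the already-established fact $\Phi^{1,1}(V_{1,\blambda}\otimes\fG_{1,1}^\bullet\otimes V_{2,\bmu})=\IC_{(\blambda^*,\bmu^*)}$ through the explicit formula $\varPhi(V_{1,\blambda}\otimes\fG_{1,1}^\bullet\otimes V_{2,\bmu})=\ol V_{1,\blambda}\otimes\ol\fG{}^\bullet\otimes V_{2,\bmu}$ and the relation of $\Phi'$ to $\jmath_0^!\Phi^{1,1}$. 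The ``principal obstacle'' you anticipate --- redoing formality, pointwise purity, and a non-symmetric $D_4$ invariant-theory step to recompute an Ext-algebra --- is therefore bypassed in the paper's route; Lemma~\ref{IT} enters only to set up $\varPhi$, not to identify an Ext-algebra from scratch.

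For part (b), your proposal to run a separate kernel-comparison argument for fusion-monoidality of $\ol\Phi$ would work, but in the paper's framework this too is meant to be inherited: fusion on $D_{\ol\bG_\bO}(\Gr)$ and fusion on $D_{!*\bG_\bO}(\Gr\times\bV)$ are compatible under the restriction $\jmath_0^!$ and $\Res_{\bM_\bO}^{\GL(N-1,\bO)}$, so the monoidality of $\Phi^{1,1}$ already established in Theorem~\ref{main spherical mirabolic} transports through the construction. Your direct approach gives an independent proof but duplicates effort that the indirect construction of $\ol\Phi$ was designed to avoid.
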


In case $N=2$, both~Theorem~\ref{GL(N-1)} and~Corollary~\ref{exact GL(N-1)}(a) were proved
in~\cite{br} by a rather different argument.

\subsection{Constructible mirabolic restriction}
\label{mirres}
Clearly, $\bV_0\setminus t\bV_0$ is a single $\bG_\bO$-orbit, and the stabilizer of the vector
  $e_1\in \bV_0\setminus t\bV_0$ is the mirabolic subgroup $\bM_\bO\subset\bG_\bO$. Hence
$D_{\bG_\bO}(\Gr\times(\bV_0\setminus t\bV_0))\cong D_{\bM_\bO}(\Gr)$. We will denote
$\Gr=\Gr_{\GL_N}$ by $\Gr_N$ to distinguish it from $\Gr_{N-1}:=\Gr_{\GL_{N-1}}$. We will also
denote by $\Gr_N^\subset\subset\Gr_N$ (resp.\ $\Gr_{N-1}^\subset\subset\Gr_{N-1}$) the 
closed subvariety classifying the sublattices in the standard one $\bV_0$ (resp.\ in
$\ol\bV\!_0:=\bO e_2\oplus\bO e_3\oplus\ldots\oplus\bO e_N$).
It is a union of the Schubert varieties numbered by the {\em negative} partitions of length
$\leq N$ (resp. $\leq N-1$). The category $\Perv_{\GL(N,\bO)}(\Gr_N^\subset)$
(resp.\ $\Perv_{\GL(N-1,\bO)}(\Gr_{N-1}^\subset)$) is monoidal and the Satake equivalence takes it
to the monoidal category of polynomial representations of $\GL_N$ (resp.\ $\GL_{N-1}$).

We have a closed embedding $\varsigma\colon\Gr_{N-1}^\subset\hookrightarrow\Gr_N^\subset,\
(\ol{L}\subset\ol\bV\!_0)\mapsto(\bO e_1\oplus\ol{L}\subset\bV_0)$.

\begin{lem}
  \label{monores}
  \textup{(a)} The functor
  $\varsigma^!\colon D_{\GL(N,\bO)}(\Gr_N^\subset)\to D_{\GL(N-1,\bO)}(\Gr_{N-1}^\subset)$ is monoidal.

  \textup{(b)} For a negative partition $\blambda$ we have
  $\varsigma^!\IC_\blambda=\IC_\blambda[|\blambda|]$ in notation of~Section~\ref{inth}
    (i.e.\ if $\lambda_1<0$, then $\varsigma^!\IC_\blambda=0$, and if $\lambda_1=0$, then
    $\varsigma^!\IC_\blambda=\IC_{(\lambda_2\geq\ldots\geq\lambda_N)}[\lambda_2+\ldots+\lambda_N]$).
\end{lem}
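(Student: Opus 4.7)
The plan for (a) is to deduce monoidality of $\varsigma^!$ from proper base change. Consider the commutative diagram
\[
\begin{CD}
\Gr_{N-1}^\subset\,\widetilde{\times}\,\Gr_{N-1}^\subset @>{\widetilde\varsigma}>> \Gr_N^\subset\,\widetilde{\times}\,\Gr_N^\subset \\
@V{m_{N-1}}VV @V{m_N}VV \\
\Gr_{N-1}^\subset @>{\varsigma}>> \Gr_N^\subset,
\end{CD}
\]
where $\widetilde\varsigma$ sends a nested pair $\ol M\subset\ol L\subset\ol\bV\!_0$ to $\bO e_1\oplus\ol M\subset\bO e_1\oplus\ol L\subset\bV_0$. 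Since $\Gr_N^\subset\,\widetilde{\times}\,\Gr_N^\subset$ parametrizes nested pairs $M\subset L\subset\bV_0$, the containment $\bO e_1\subset M$ automatically forces $\bO e_1\subset L$, and one then uniquely decomposes $L=\bO e_1\oplus\ol L$, $M=\bO e_1\oplus\ol M$ with $\ol M\subset\ol L\subset\ol\bV\!_0$; hence the square is Cartesian. As $m_N$ is proper on each Schubert subvariety, proper base change yields $\varsigma^! m_{N*}\simeq m_{N-1,*}\widetilde\varsigma^!$, and the analogous Cartesian-ness of the diagrams defining the twisted external product identifies $\widetilde\varsigma^!(\CF\,\widetilde{\boxtimes}\,\CG)$ with $\varsigma^!\CF\,\widetilde{\boxtimes}\,\varsigma^!\CG$, completing (a).

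For (b), I first identify $\varsigma^{-1}(\Gr_N^\blambda)$ set-theoretically. If $L\in\Gr_N^\blambda$ satisfies $\bO e_1\subset L$, then setting $\ol L:=L\cap\ol\bV\!_0$ gives $L=\bO e_1\oplus\ol L$, and the natural isomorphism $\bV_0/L\iso\ol\bV\!_0/\ol L$ forces $\ol\bV\!_0/\ol L$ to have invariant factor type $(t^{-\lambda_1},\ldots,t^{-\lambda_N})$. When $\lambda_1<0$ all $N$ factors are non-trivial, contradicting the fact that $\ol\bV\!_0/\ol L$ is generated by at most $N-1$ elements; hence $\varsigma^{-1}(\Gr_N^\blambda)=\emptyset$. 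The dominance relation $\bmu\leq\blambda$ forces $\mu_1\leq\lambda_1<0$, so the entire closure $\ol{\Gr_N^\blambda}$ has empty preimage and $\varsigma^!\IC_\blambda=0$. When $\lambda_1=0$ the first invariant factor is trivial and the remaining $N-1$ identify $\ol L\in\Gr_{N-1}^{\ol\blambda}$; conversely, any such $\ol L$ lifts back to $L=\bO e_1\oplus\ol L\in\Gr_N^\blambda$, yielding a bijection $\varsigma^{-1}(\Gr_N^\blambda)\cong\Gr_{N-1}^{\ol\blambda}$, and by the same dominance argument $\varsigma^{-1}(\ol{\Gr_N^\blambda})=\ol{\Gr_{N-1}^{\ol\blambda}}$.

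It remains to match shifts and pass from the open stratum to the full IC sheaf in the case $\lambda_1=0$. My plan is first to verify that $\Gr_{N-1}^{\ol\blambda}\hookrightarrow\Gr_N^\blambda$ is a regular closed embedding of codimension $c=-(\lambda_2+\ldots+\lambda_N)$: since the condition $\bO e_1\subset L$ is cut out by $\dim_\BC(\bV_0/L)=c$ linear conditions on infinitesimal deformations of $L$ (namely, the vanishing of the image of $e_1$ in $\bV_0/L$), independence at the base point $L_\blambda=\bO e_1\oplus\bigoplus_{i=2}^N\bO t^{-\lambda_i}e_i$ follows from a direct inspection of $\Hom_\bO(L_\blambda,\bV_0/L_\blambda)$, and the expected codimension $c$ agrees with $\dim\Gr_N^\blambda-\dim\Gr_{N-1}^{\ol\blambda}$. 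The formula $\varsigma^!=\varsigma^*[-2c]$ for a regular embedding then gives $\varsigma^!\IC_\blambda|_{\Gr_{N-1}^{\ol\blambda}}=\ol\BQ_\ell[\dim\Gr_N^\blambda-2c]=\ol\BQ_\ell[\dim\Gr_{N-1}^{\ol\blambda}+\lambda_2+\ldots+\lambda_N]=\IC_{\ol\blambda}[\lambda_2+\ldots+\lambda_N]|_{\Gr_{N-1}^{\ol\blambda}}$ on the top stratum. The main obstacle is extending this open-stratum identification to the full closure. My plan is to invoke purity, passing to $\BF_q$ as in the proof of \lemref{formality}: $\IC_\blambda$ is pointwise pure by~\cite[Section~3]{fgt}, and a regular embedding preserves purity, so $\varsigma^!\IC_\blambda$ is pure of the expected weight and hence a direct sum of shifted IC sheaves; the open-stratum calculation pins down the summand $\IC_{\ol\blambda}[\lambda_2+\ldots+\lambda_N]$, while a weight and dimension estimate on each deeper stratum $\Gr_{N-1}^{\ol\bmu}$ (using the known weights of $\IC_\blambda|_{\Gr_N^\bmu}$ combined with the codimension of $\Gr_{N-1}^{\ol\bmu}\hookrightarrow\Gr_N^\bmu$) rules out further summands.
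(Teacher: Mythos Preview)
Your argument for (a) via proper base change on the convolution diagram is correct and is a genuinely different route from the paper's. The paper instead identifies $\varsigma^!$ with the restriction to one connected component of the hyperbolic restriction functor to the Levi Grassmannian $\Gr_{\GL_1\times\GL_{N-1}}\subset\Gr_N$: the key observation is that on $\Gr_N^{\subset,(-n)}$ the attracting set of $\varsigma(\Gr_{N-1}^{\subset,(-n)})$ under the relevant $\BG_m$ is already contained in $\varsigma(\Gr_{N-1}^{\subset,(-n)})$, so hyperbolic restriction collapses to ordinary $!$-restriction. Your base change argument is more elementary and self-contained; the paper's identification with hyperbolic restriction, however, immediately yields (b) as a corollary, which your approach does not.

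For (b) there is a genuine gap. Your codimension computation is correct on each open stratum, but the codimension of $\Gr_{N-1}^{\ol\bmu}\hookrightarrow\Gr_N^{\bmu}$ is $-|\bmu|$, which \emph{varies} with $\bmu$; hence $\varsigma$ is not a regular embedding of constant codimension into $\ol{\Gr_N^{\blambda}}$ (nor into any smooth ambient), and there is no global formula $\varsigma^!=\varsigma^*[-2c]$. Consequently the purity claim is unjustified: for a general closed embedding $i$ one only knows $i^!$ raises weights and $i^*$ lowers them, so pointwise purity of $\IC_\blambda$ does not force $\varsigma^!\IC_\blambda$ to be pure, and the decomposition into shifted IC summands is not available. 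The final ``weight and dimension estimate'' is therefore both unexecuted and, as stated, unsupported. The paper sidesteps all of this: once $\varsigma^!$ is identified with a component of hyperbolic restriction, geometric Satake translates (b) into the representation-theoretic identity $(\BS_\blambda V_1^*)^{\GL_1}=\BS_\blambda\ol V{}_1^*$, which gives the answer (including the shift, via~\cite[Proposition~5.3.29]{bd}) with no further geometry needed.
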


\begin{proof}
  (a) The Grassmannian of sublattices $\Gr_N^\subset$ (resp.\ $\Gr_{N-1}^\subset$) is the union of
  connected components $\bigsqcup_{n\in\BN}\Gr_N^{\subset,(-n)}$
  (resp.\ $\bigsqcup_{n\in\BN}\Gr_{N-1}^{\subset,(-n)}$)
  parametrizing sublattices of codimension $n$. Clearly,
  $\varsigma(\Gr_{N-1}^{\subset,(-n)})\subset\Gr_N^{\subset,(-n)}$. Moreover,
  $\varsigma(\Gr_{N-1}^{\subset,(-n)})$ is a
  connected component of the intersection of the Levi Grassmannian
  $\Gr_{\GL_1\times\GL_{N-1}}\subset\Gr_N$ with $\Gr_N^{\subset,(-n)}$.
  We have the monoidal functor of hyperbolic restriction to the Levi Grassmannian, see
  e.g.~\cite[5.3.28 at page 213]{bd}. Since
$(\bM_\bF\cdot\varsigma(\Gr_{N-1}^{\subset,(-n)}))\cap\Gr_N^{\subset,(-n)}=\varsigma(\Gr_{N-1}^{\subset,(-n)})$,
  the hyperbolic restriction to the component $\varsigma(\Gr_{N-1}^{\subset,(-n)})$ coincides with the
  corestriction $\varsigma^!$.

  (b) We have $(\BS_\blambda V_1^*)^{\GL_1}=\BS_\blambda\ol{V}{}^*\!\!\!\!_1$. Now the desired claim
  follows from~\cite[Proposition~5.3.29]{bd}.
\end{proof}

Recall that the monoidal Hecke category $D_{\GL(N,\bO)}(\Gr_N)=D_{\bG_\bO}(\Gr)$
acts by the left convolution on $D_{!*\bG_\bO}(\Gr\times\bV)$. In particular, the monoidal subcategory
$D_\bGO(\Gr_N^\subset)\subset D_\bGO(\Gr)$ acts on $D_{!*\bG_\bO}(\Gr\times\bV)$.
We define the action of $D_\bGO(\Gr_N^\subset)$ by the left convolution on
$D_{\bG_\bO}(\Gr\times(\bV_0\setminus t\bV_0))$ as follows:
$\CA*\CF:=\jmath_0^!(\CA*\jmath_{0*}\CF)$
(notation of~Section~\ref{Restriction}).
Also, the monoidal Hecke category $D_{\GL(N-1,\bO)}(\Gr_{N-1})$ acts by the left
convolution on $D_{\GL(N-1,\bO)}(\Gr_N)$. Finally, we denote by
$\Res_{\bM_\bO}^{\GL(N-1,\bO)}\colon D_{\bM_\bO}(\Gr_N)\to D_{\GL(N-1,\bO)}(\Gr_N)$ the functor
of restriction of equivariance under $\GL(N-1,\bO)\hookrightarrow\bM_\bO$.

\begin{lem}
  \label{MGL}
  We have a compatible system of isomorphisms for all $\CA\in D_{\GL(N,\bO)}(\Gr_N^\subset)$
  and $\CF\in D_{\bG_\bO}(\Gr\times(\bV_0\setminus t\bV_0))\cong D_{\bM_\bO}(\Gr)$:
  \[\Res_{\bM_\bO}^{\GL(N-1,\bO)}(\CA*\CF)\iso\varsigma^!\CA*\Res_{\bM_\bO}^{\GL(N-1,\bO)}(\CF).\]
\end{lem}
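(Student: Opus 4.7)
The plan is to compute both sides by applying base change in the convolution diagram of Section 2.2 and then identifying the relevant correspondence geometrically. Recall that under the identification $\Gr \times \bV \cong \bG_\bF \times^{\bGO} \bV$ via $(g\bV_0, v) \leftrightarrow [g, g^{-1}v]$, the diagonal $\bG_\bO$-action becomes $h \cdot [g,v] = [hg,v]$, and the convolution map $\bm$ in~\eqref{left conv} sends $[g_1, [g_2,v]] \mapsto [g_1g_2, v]$, corresponding to $(g_1g_2 \bV_0, g_1g_2 v) \in \Gr \times \bV$. Both sides of the desired isomorphism are $\GL(N-1,\bO)$-equivariant sheaves on $\Gr_N$ under the equivalence $D_{\bG_\bO}(\Gr\times(\bV_0\setminus t\bV_0))\cong D_{\bM_\bO}(\Gr_N)$ (the latter uses the basepoint $e_1$, so $\GL(N-1,\bO)\subset\bM_\bO$ is the natural block-diagonal stabilizer).

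First I would realize the left-hand side as $\jmath_0^!(\CA * \jmath_{0*}\CF)$ restricted along the slice $\Gr \times \{e_1\} \hookrightarrow \Gr \times (\bV_0\setminus t\bV_0)$ (this slice captures all $\GL(N-1,\bO)$-equivariant information since $\GL(N-1,\bO)$ fixes $e_1$). Applying proper base change to $\bm$ and setting $Z := \bm^{-1}(\Gr \times \{e_1\})$, one sees that $Z$ is described (up to $\bG_\bO$-equivariance) by triples $[g_1, L_0, w_0]$ with $g_1 w_0 = e_1$ and $g_1 L_0 \in \Gr$; restricting to the open locus where $w_0 \in \bV_0 \setminus t\bV_0$ (which is the part surviving $\jmath_0^!$) and using the $\bGO$-equivariance to normalize $w_0 = e_1$, one obtains representatives $(g', L_0, e_1)$ with $g' \in P := \mathrm{Stab}_{\bG_\bF}(e_1)$, modulo the residual right $\bM_\bO$-action.

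The key geometric identification is that the elements $g' \in P$ with $g'\bV_0 \subset \bV_0$ (the support condition coming from $\CA \in D_{\bG_\bO}(\Gr_N^\subset)$), taken modulo $\bM_\bO$ on the right, correspond bijectively with sublattices of $\bV_0$ containing $e_1$, and such sublattices are precisely $\bO e_1 \oplus \bar L$ for $\bar L \subset \ol\bV_0$---that is, exactly the image of $\varsigma\colon \Gr_{N-1}^\subset \hookrightarrow \Gr_N^\subset$. Consequently, $Z \cap \mathrm{supp}(\CA\tbx\jmath_{0*}\CF) \cap \{w_0 \in \bV_0\setminus t\bV_0\}$ is identified with the convolution space $\bar\bG_\bF^\subset \times^{\bar\bG_\bO} \Gr_N$ that defines the $D_{\bar\bGO}(\Gr_{N-1}^\subset)$-action on $D_{\bar\bGO}(\Gr_N)$. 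Under this identification, the pullback $(\fp_\lef^* \CA)|_{\varsigma(\Gr_{N-1}^\subset)}$ of the kernel is exactly $\varsigma^!\CA$ (up to the cohomological shift by $|\bla|$ from Lemma 3.1.3(b), which is absorbed into the relative dimensions of the convolution diagram for $\Gr_{N-1}^\subset$ versus $\Gr_N^\subset$, as in the proof of Lemma 3.1.3(a)). Since $\bm$ then becomes the convolution multiplication for $\bar\bG_\bF$ acting on $\Gr_N$, the base change isomorphism yields the stated equivalence on the level of underlying complexes.

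The compatibility of these isomorphisms as $\CA, \CF$ vary (i.e.\ the "compatible system" part) is built into the construction via naturality of base change and of the descent along $\fq$. The main technical obstacle is the bookkeeping of equivariant structures and cohomological shifts: one must check that the residual $\bar\bGO$-equivariance extracted from the diagonal $\bGO$-equivariance of $\CA\tbx\jmath_{0*}\CF$ matches the $\bar\bGO$-equivariance that intrinsically defines the right-hand side, and that the dimension shift $[|\bla|]$ built into Lemma 3.1.3(b) matches the excess of $\dim \bG_\bO$-orbits over $\dim \bar\bG_\bO$-orbits in the relevant strata. Both checks reduce to a direct comparison with the hyperbolic restriction picture of Lemma 3.1.3(a) (via \cite{bd}), which is why that lemma is proved first.
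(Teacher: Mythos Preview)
Your proposal is correct and is precisely what the paper means by its one-line proof ``Comparison of definitions'': you have spelled out the base-change computation in the convolution diagram~\eqref{left conv} that identifies the fiber over $\Gr\times\{e_1\}$ with the $\Gr_{N-1}^\subset$-convolution space, which is exactly the unwinding the paper leaves implicit. The reduction of the shift and $!$-vs-$*$ bookkeeping to the hyperbolic-restriction picture of Lemma~\ref{monores}(a) is the right way to close the argument.
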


\begin{proof}
  Comparison of definitions.
\end{proof}

\subsection{Coherent mirabolic restriction}
\label{cohmirres}
Recall the subcategory
$D_\perf^{G_{\bar0},\geq}(\fG_{1,1}^\bullet)\subset D_\perf^{G_{\bar0}}(\fG_{1,1}^\bullet)$ generated by
$\{V_{1,\blambda}\otimes\fG_{1,1}^\bullet\otimes V_{2,\bmu}\mid\bla\geq0\}$ introduced in the
proof of~Proposition~\ref{corestriction}(a). The equivalence
$\Phi^{1,1}\colon D_\perf^{G_{\bar0}}(\fG_{1,1}^\bullet)\iso D_{!*\bG_\bO}(\Gr\times\bV)$
restricts to the same named equivalence
$D_\perf^{G_{\bar0},\geq}(\fG_{1,1}^\bullet)\iso D_{!*\bG_\bO}(\Gr\times\bV_0)$, and the functor
$p_*q^*\colon D_\perf^{G_{\bar0},\geq}(\fG_{1,1}^\bullet)\to D_\perf^{G_{\bar0},\geq}(\fG_{1,1}^\bullet)$
(notation of~Section~\ref{Restriction}) is isomorphic to $\Id$. Thus we have a natural
morphism $\Id\cong p_*q^*\to p_{0*}q_0^*$ (notation of~Section~\ref{Restriction}) of endofunctors
of $D_\perf^{G_{\bar0},\geq}(\fG_{1,1}^\bullet)$. Composing with another copy of $p_{0*}q_0^*$ we
obtain a morphism $p_{0*}q_0^*\to p_{0*}q_0^*\circ p_{0*}q_0^*$ of endofunctors
of $D_\perf^{G_{\bar0},\geq}(\fG_{1,1}^\bullet)$ that is easily seen to be an isomorphism. Indeed,
according to~Proposition~\ref{corestriction}(c), $\Phi^{1,1}$ takes $p_{0*}q_0^*$ to
$\jmath_{0*}\jmath_0^!$, and $\jmath_{0*}\jmath_0^!\iso\jmath_{0*}\jmath_0^!\circ\jmath_{0*}\jmath_0^!$.
Inverting the isomorphism $p_{0*}q_0^*\iso p_{0*}q_0^*\circ p_{0*}q_0^*$, we obtain an isomorphism
$p_{0*}q_0^*\circ p_{0*}q_0^*\iso p_{0*}q_0^*$ that, together with the morphism $\Id\to p_{0*}q_0^*$
equips $p_{0*}q_0^*$ with a structure of (idempotent) monad in $D_\perf^{G_{\bar0},\geq}(\fG_{1,1}^\bullet)$.

  We denote the dg-algebra with trivial
  differential $\fG_{1,1}^\bullet\otimes\BC[\Hom_{\leq N-1}(V_1,V'_1)]$ (the grading on
  $\BC[\Hom_{\leq N-1}(V_1,V'_1)]$ is trivial) by $\fF^\bullet$
  (here $\Hom_{\leq N-1}(V_1,V'_1)\subset\Hom(V_1,V'_1)$
stands for the subvariety formed by the noninvertible homomorphisms).
It is acted upon by $G_\CQ$.
  By~Theorem~\ref{main spherical mirabolic} and~Proposition~\ref{corestriction},
  we have an equivalence of categories $\Phi'$ from the Kleisli category
  (see~\cite[VI.5]{mac}) $D(p_{0*}q_0^*)$ of
  the monad $p_{0*}q_0^*$ in $D^{G_{\bar0},\geq}_\perf(\fG_{1,1}^\bullet)$ to
  $D_{\bG_\bO}(\Gr\times(\bV_0\setminus t\bV_0))$.
  For the modules $V_{1,\blambda}\otimes\fG_{1,1}^\bullet\otimes V_{2,\bmu},\
  V_{1,\blambda'}\otimes\fG_{1,1}^\bullet\otimes V_{2,\bmu'}$ over the monad $p_{0*}q_0^*$
  (here both $\blambda,\blambda'$ are partitions), we have
  \[\Hom_{D(p_{0*}q_0^*)}(V_{1,\blambda}\otimes\fG_{1,1}^\bullet\otimes V_{2,\bmu},
  V_{1,\blambda'}\otimes\fG_{1,1}^\bullet\otimes V_{2,\bmu'})=
  \big(V_{1,\blambda}^*\otimes V'_{1,\blambda'}\otimes\fF^\bullet\otimes(V^*_{2,\bmu}\otimes
  V_{2,\bmu'})\big)^{G_\CQ}.\]

  Recall the equivalence
  $D_{\bG_\bO}(\Gr\times(\bV_0\setminus t\bV_0))\cong D_{\bM_\bO}(\Gr)$. By an abuse of notation,
  we will denote the composed equivalence $D(p_{0*}q_0^*)\iso D_{\bM_\bO}(\Gr)$
  also by $\Phi'$.

  On the other hand, we consider a full subcategory
  $D^{\ol{G}_{\bar0},\geq0}_\perf(\ol\fG{}^\bullet)\subset D^{\ol{G}_{\bar0}}_\perf(\ol\fG{}^\bullet)$
  generated by the free modules $\ol{V}\!_{1,\blambda}\otimes\ol\fG{}^\bullet\otimes V_{2,\bmu}$
  for partitions $\blambda$ (of length $\leq~N-1$). We construct an equivalence
  $\varPhi\colon D(p_{0*}q_0^*)\iso D^{\ol{G}_{\bar0},\geq0}_\perf(\ol\fG{}^\bullet)$ as follows.
We consider the variety $\CP$ of sixtuples \[A\in\Hom(V_1,V_2),\ B\in\Hom(V_2,V_1),\
\ol{A}\in\Hom(\ol{V}\!_1,V_2),\] \[\ol{B}\in\Hom(V_2,\ol{V}\!_1),\ b\in\Hom(V_1,\ol{V}\!_1),\
a\in\Hom(\ol{V}\!_1,V_1),\]
such that $A=\ol{A}b,\ \ol{B}=bB,\ a=B\ol{A}.$
Clearly, \[\CP\simeq\Hom(V_1,\ol{V}\!_1)\times\Hom(\ol{V}\!_1,V_2)\times\Hom(V_2,V_1).\]
\[
\xymatrix @C=2.2em @R=1.2em{
& V_1\ar@<-0.25ex>@{-->}@/^/[dr]_A \ar@<-0.25ex>@/_/[dl]_b \\
  \ol{V}\!_1 \ar@<-0.5ex>[rr]_{\ol A} \ar@{-->}@<-0,25ex>@/^/[ur]_{a}  && V_2 \ar@<-0.25ex>@/_/[ul]_B
  \ar@{-->}[ll]_{\ol B}
}
\]
We have the natural morphisms
\[\sfp\colon\CP\to\Hom(V_1,V_2)\times\Hom(V_2,V_1)=\Pi\fg^*_{\bar1},\
\sq\colon\CP\to\Hom(\ol{V}\!_1,V_2)\times\Hom(V_2,\ol{V}\!_1)=\Pi\ol\fg{}^*_{\bar1};\]
\[\sfp(A,\ol{A},a,b,B,\ol{B})=(A,B),\ \sq(A,\ol{A},a,b,B,\ol{B})=(\ol{A},\ol{B}).\]
The variety $\CP$ is acted upon by
$G_\CP:=\GL(V_1)\times\GL(\ol{V}\!_1)\times\GL(V_2)$: \[(g_1,\bar{g}_1,g_2)(A,\ol{A},a,b,B,\ol{B})=
(g_2Ag_1^{-1},g_2\ol{A}\bar{g}{}^{-1}_1,g_1a\bar{g}_1^{-1},\bar{g}_1bg_1^{-1},g_1Bg_2^{-1},\bar{g}_1\ol{B}g_2^{-1}).\]
We have the natural morphisms \[\sfp\colon G_\CP\to G_{\bar0},\ \sfp(g_1,\bar{g}_1,g_2)=(g_1,g_2);\
\sq\colon G_\CP\to\ol{G}_{\bar0},\ \sq(g_1,\bar{g}_1,g_2)=(\bar{g}_1,g_2).\] Clearly, the morphisms
$\sfp\colon\CP\to\Pi\fg^*_{\bar1},\ \sq\colon\CP\to\Pi\ol\fg{}^*_{\bar1}$ are equivariant with
respect to $\sfp\colon G_\CP\to G_{\bar0},\ \sq\colon G_\CP\to\ol{G}_{\bar0}$.
Hence we have the convolution functor
\[\sq_*\sfp^*\colon\Coh^{G_{\bar0}}(\Pi\fg^*_{\bar1})=
\Coh(G_{\bar0}\backslash\Pi\fg^*_{\bar1})\xrightarrow{\sfp^*}
\Coh(G_\CP\backslash\CP)\xrightarrow{\sq_*}\Coh(\ol{G}_{\bar0}\backslash\Pi\ol\fg{}^*_{\bar1})=
\Coh^{\ol{G}_{\bar0}}(\Pi\ol\fg{}^*_{\bar1})\]
(in particular, $\sq_*$ involves taking $\GL(V_1)$-invariants).
We will actually need the same named functor $\sq_*\sfp^*\colon
D_\perf^{G_{\bar0}}(\fG_{1,1}^\bullet)\to D_\perf^{\ol{G}_{\bar0}}(\ol\fG{}^\bullet)$
defined similarly using the dg-algebra with trivial differential
\[\Sym\big(\Hom(V_1,V_2)[-1]\oplus\Hom(\ol{V}\!_1,V_1)[-1]\big)\otimes\Sym\Hom(\ol{V}\!_1,V_1)\]
(the grading on $\Sym\Hom(\ol{V}\!_1,V_1)$ is trivial, and if we disregard the grading, then
$\Sym\big(\Hom(V_1,V_2)[-1]\oplus\Hom(\ol{V}\!_1,V_1)[-1]\big)\otimes\Sym\Hom(\ol{V}\!_1,V_1)
\simeq\BC[\CP]$).

Now recall that $D(p_{0*}q_0^*)$ is a full subcategory of $D_\perf^{G_{\bar0}}(\fG_{1,1}^\bullet)$.
The desired functor $\varPhi\colon D(p_{0*}q_0^*)\to D^{\ol{G}_{\bar0}}_\perf(\ol\fG{}^\bullet)$
is nothing but the restriction of $\sq_*\sfp^*$ to the full subcategory
$D(p_{0*}q_0^*)\subset D_\perf^{G_{\bar0}}(\fG_{1,1}^\bullet)$. The full subcategory $D(p_{0*}q_0^*)$
is generated by the objects $p_{0*}q_0^*(V_{1,\blambda}\otimes\fG_{1,1}^\bullet\otimes V_{2,\bmu})
\cong_{D(p_{0*}q_0^*)} V_{1,\blambda}\otimes\fG_{1,1}^\bullet\otimes V_{2,\bmu}$ (the LHS and the RHS
are isomorphic in the Kleisli category $D(p_{0*}q_0^*)$)
with $\blambda$ running through the set of partitions. For a partition $\blambda$ we have
\[\varPhi(V_{1,\blambda}\otimes\fG_{1,1}^\bullet\otimes V_{2,\bmu})=
\ol{V}\!_{1,\blambda}\otimes\ol\fG{}^\bullet\otimes V_{2,\bmu}\]
in notation of~Section~\ref{inth} (i.e.\ if
$\blambda=(\lambda_1\geq\lambda_2\geq\ldots\geq\lambda_N\geq0)$ and $\lambda_N>0$, then
$\varPhi(V_{1,\blambda}\otimes\fG_{1,1}^\bullet\otimes V_{2,\bmu})=0$, and if $\lambda_N=0$, then
\[\varPhi(V_{1,\blambda}\otimes\fG_{1,1}^\bullet\otimes V_{2,\bmu})=
\ol{V}\!_{1,(\lambda_1\geq\ldots\geq\lambda_{N-1})}\otimes\ol\fG{}^\bullet\otimes V_{2,\bmu}.)\]
Hence $\varPhi(V_{1,\blambda}\otimes\fG_{1,1}^\bullet\otimes V_{2,\bmu})$ actually lies in
$D^{\ol{G}_{\bar0},\geq0}_\perf(\ol\fG{}^\bullet)\subset D^{\ol{G}_{\bar0}}_\perf(\ol\fG{}^\bullet)$.
It remains to check that $\varPhi$ is fully faithful.

For partitions $\blambda,\blambda'$, we have to check that the following
morphism is an isomorphism:
  \begin{multline*}
    \big(V_{1,\blambda}^*\otimes p_{0*}q_0^*(V'_{1,\blambda'}\otimes\BC[\Hom(V'_1,V_2)\times\Hom(V_2,V'_1)]
    \otimes V_{2,\bmu'})\otimes V_{2,\bmu}^*\big)^{\GL(V_1)\times\GL(V_2)}\\
    :=\big(V_{1,\blambda}^*\otimes V'_{1,\blambda'}\otimes\BC[\CQ_0]\otimes(V_{2,\bmu}^*\otimes
    V_{2,\bmu'})\big)^{G_\CQ}
  =\big(\BS_\blambda V_1^*\otimes\BS_{\blambda'}V_1'\otimes\BC[\CQ_0]
  \otimes(V_{2,\bmu}^*\otimes V_{2,\bmu'})\big)^{G_\CQ}\\
  \iso\big(\BS_\blambda\ol{V}{}_{\!\!1}^*\otimes\BS_{\blambda'}\ol{V}{}_{\!\!1}\otimes
  \BC[\Hom(\ol{V}\!_1,V_2)\times\Hom(V_2,\ol{V}\!_1)]
  \otimes(V_{2,\bmu}^*\otimes V_{2,\bmu'})\big)^{\GL(\ol{V}\!_1)\times\GL(V_2)}.
  \end{multline*}
\[
\xymatrix @C=2.2em @R=1.2em{
& V_1\ar@{-->}@/^/[dr]_A \ar@<-0.25ex>[dd]_(.3){A''\!\!} \ar@<-0.5ex>@/_/[dl]_b\\
  \ol{V}\!_1 \ar@<-0.5ex>@/_/[dr]_{a'} \ar@<-0.25ex>@{.>}[rr]_(.25){\ol A} \ar@{-->}@/^/[ur]_(.7){\!\!\!a}
  && V_2\ar@<-0.5ex>@/_/[ul]_B \ar@{-->}@/^/[dl]_{B'\!}
  \ar@{.>}@<-0.25ex>[ll]_(.75){\ol B}\\
& V'_1\ar@<-0.5ex>@/_/[ur]_{A'} \ar@<-0.25ex>@{-->}[uu]_(.7){\!\!B''} \ar@{-->}@/^/[ul]_(.3){\!\!b'}
}
\]
The desired isomorphism is equal to the composition of the following three. The first one
is induced by \[\BC[\Hom_{\leq N-1}(V_1,V'_1)]\iso\big(\BC[\Hom(V_1,\ol{V}\!_1)]\otimes
\BC[\Hom(\ol{V}\!_1,V'_1)]\big)^{\GL(\ol{V}\!_1)},\] see Lemma~\ref{IT}(a).
The second one is induced by the inverse of
\[\BS_{\blambda'}\ol{V}\!_1\otimes\BC[\Hom(\ol{V}\!_1,V_2)]\iso
\big(\BC[\Hom(\ol{V}\!_1,V'_1)\otimes\BS_{\blambda'}V'_1\otimes\BC[\Hom(V'_1,V_2)]\big)^{\GL(V'_1)},\]
see Lemma~\ref{IT}(b).
The third one is induced by the inverse of
\[\BC[\Hom(V_2,\ol{V}\!_1)]\otimes\BS_{\blambda^*}\ol{V}\!_1\iso
\big(\BC[\Hom(V_2,V_1)\otimes\BS_{\blambda^*}V_1\otimes
    \BC[\Hom(V_1,\ol{V}\!_1)]\big)^{\GL(V_1)},\] see Lemma~\ref{IT}(c).

\subsection{Proof of Theorem~\ref{GL(N-1)}}
\label{coda}
We consider an $\bF$-linear automorphism $\xi$ of $\bV\colon \xi(e_1)=e_1,\ \xi(e_i)=te_i$
for $i=2,\ldots,N$, and the same named induced automorphism of $\Gr$.
It is given by the action of an element $\on{diag}(1,t,\ldots,t)$ of the diagonal Cartan torus
$\bT_\bF$. Note that $\on{diag}(1,t,\ldots,t)\bM_\bO\on{diag}(1,t^{-1},\ldots,t^{-1})\supset\bM_\bO$.
Hence $\xi_*$ acts on the equivariant category $D_{\bM_\bO}(\Gr)$ (by restricting equivariance
from $\on{diag}(1,t,\ldots,t)\bM_\bO\on{diag}(1,t^{-1},\ldots,t^{-1})$ to $\bM_\bO$).
Clearly, $\xi_*$ acts on the equivariant category $D_{\GL(N-1,\bO)}(\Gr)$ as well. Recall that
$\Res_{\bM_\bO}^{\GL(N-1,\bO)}\colon D_{\bM_\bO}(\Gr)\to D_{\GL(N-1,\bO)}(\Gr)$ denotes the functor
of restriction of equivariance under $\GL(N-1,\bO)\hookrightarrow\bM_\bO$.
We denote by $\Av^{\bM_\bO}_{\GL(N-1,\bO)}\colon D_{\GL(N-1,\bO)}(\Gr)\to D_{\bM_\bO}(\Gr)$ the
corresponding right adjoint $*$-averaging functor.

\begin{lem}
  \label{loca}
  \textup{(a)} Given $\CF\in D_{\GL(N-1,\bO)}(\Gr)$, for $n\gg0$ the canonical morphism
  $\Res_{\bM_\bO}^{\GL(N-1,\bO)}\Av^{\bM_\bO}_{\GL(N-1,\bO)}\xi^n_*\CF\to\xi^n_*\CF$ is an isomorphism.

  \textup{(b)} $\xi_*$ is an auto-equivalence of $D_{\GL(N-1,\bO)}(\Gr)$, and hence the
  natural functor from $D_{\GL(N-1,\bO)}(\Gr)$ to
  $\on{colim}_{\xi_*}D_{\GL(N-1,\bO)}(\Gr)$ is an equivalence.

  \textup{(c)} The restriction of equivariance functor
  $\Res_{\bM_\bO}^{\GL(N-1,\bO)}\colon D_{\bM_\bO}(\Gr)\to D_{\GL(N-1,\bO)}(\Gr)$ induces the same named
  equivalence of the colimits
  $\Res_{\bM_\bO}^{\GL(N-1,\bO)}\colon \on{colim}_{\xi_*}D_{\bM_\bO}(\Gr)\to
  \on{colim}_{\xi_*}D_{\GL(N-1,\bO)}(\Gr)\cong D_{\GL(N-1,\bO)}(\Gr)$.
\end{lem}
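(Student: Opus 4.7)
The content of the lemma lies in (a); parts (b) and (c) will follow formally once (a) is in hand. My plan for (a) is a direct computation in $\bG_\bF$. Writing an element of the unipotent radical $U_\bO := \ker(\bM_\bO \twoheadrightarrow \GL(N-1,\bO))$ as $u(v) = I + e_1 v^T$ for $v \in \bO^{N-1}$, I would first verify that $\xi^{-n} u(v) \xi^n = u(t^n v)$, so that $\xi^{-n} \bM_\bO \xi^n = \GL(N-1,\bO) \ltimes t^n U_\bO \subset \bM_\bO$. Consequently, $\bM_\bO$-equivariance of $\xi^n_* \CF$ is the same as $(\GL(N-1,\bO) \ltimes t^n U_\bO)$-equivariance of $\CF$ itself. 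Since $\CF$ is by construction supported on a finite union $X$ of Schubert varieties in $\Gr$, there exist uniform integers $n_1, n_2$ such that $t^{n_2} \bV_0 \subset L \subset t^{-n_1} \bV_0$ for every lattice $L \in X$, from which a short calculation shows that $t^n U_\bO$ acts trivially on $X$ whenever $n \geq n_1 + n_2$. The resulting canonical trivial $t^n U_\bO$-equivariant structure combines with the given $\GL(N-1,\bO)$-equivariance to produce the desired $(\GL(N-1,\bO) \ltimes t^n U_\bO)$-equivariant structure on $\CF$, proving the isomorphism claimed in (a).

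For (b), I would observe that $\xi = \on{diag}(1,t,\ldots,t)$ centralizes $\GL(N-1,\bO)$, each acting by scalars on complementary summands of $\bV$. Hence $\xi$ defines a $\GL(N-1,\bO)$-equivariant ind-automorphism of $\Gr$, so $\xi_*$ is an auto-equivalence of $D_{\GL(N-1,\bO)}(\Gr)$ with inverse $(\xi^{-1})_*$, and the colimit assertion is then immediate.

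For (c), note that $\xi$ also normalizes $\bM_\bO$ from outside ($\xi \bM_\bO \xi^{-1} \supset \bM_\bO$), so $\xi_*$ makes sense on $D_{\bM_\bO}(\Gr)$ and commutes with $\Res_{\bM_\bO}^{\GL(N-1,\bO)}$. I would invoke the standard fact that $\Res$ is fully faithful, since the quotient $\bM_\bO/\GL(N-1,\bO) \cong \bO^{N-1}$ is pro-unipotent and connected; the induced functor on colimits is then also fully faithful. Combined with (a) and (b), every object of $\on{colim}_{\xi_*}D_{\GL(N-1,\bO)}(\Gr) \cong D_{\GL(N-1,\bO)}(\Gr)$ lies in the essential image of this colimit functor, yielding the equivalence.

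The main obstacle is the verification in (a) that the trivial set-theoretic action of $t^n U_\bO$ on $X$ upgrades to a genuine group-scheme action, and that the resulting canonical equivariance datum assembles coherently with the $\GL(N-1,\bO)$-structure into a single $(\GL(N-1,\bO) \ltimes t^n U_\bO)$-equivariant object in the pro-algebraic equivariant constructible formalism. This is the familiar mechanism by which a sufficiently deep congruence subgroup acts trivially on any given quasi-compact substack, but as usual careful sheaf-theoretic bookkeeping is required.
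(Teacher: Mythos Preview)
Your proposal is correct and follows essentially the same route as the paper's proof: the key step in (a) is precisely that conjugation by $\xi^{-n}$ shrinks the unipotent radical $\bU_\bO$ to the congruence subgroup $t^n\bU_\bO=\bU_\bO^{(n)}$, which acts trivially on any given quasi-compact piece of $\Gr$, so the $\GL(N-1,\bO)$-equivariance of $\CF$ extends canonically. Your treatment of (b) and (c) (full faithfulness of $\Res$ via pro-unipotence of $\bM_\bO/\GL(N-1,\bO)$, then essential surjectivity from (a)) is exactly the content hidden behind the paper's ``(c) follows from (a) and (b).''
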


\begin{proof}
  It suffices to prove (a) for an irreducible perverse $\CF\in D_{\GL(N-1,\bO)}(\Gr)$. This in
  turn follows from the fact that for any $\GL(N-1,\bO)$-orbit $\BO\subset\Gr$ and $n\gg0$,
  the shift $\xi^{-n}\BO$ becomes $\bM_\bO$-invariant. Indeed, $\bM_\bO$ is generated by
  its radical $\bU_\bO$ and $\GL(N-1,\bO)$. As $n$ grows,
  $\bU_\bO^{(n)}:=\on{diag}(1,t^{-n},\ldots,t^{-n})\bU_\bO\on{diag}(1,t^n,\ldots,t^n)$ forms a system of
  shrinking subgroups of $\bU_\bO$, and we take $n$ big enough so that the action of
  $\bU_\bO^{(n)}$ on $\BO$ is trivial (recall that the action of $\bU_\bO$ on any Schubert subvariety
  of $\Gr$ factors through a finite dimensional quotient group).

  (b) is evident, and (c) follows from (a) and (b).
\end{proof}

On the coherent side we consider an endofunctor $\eta$ of
$D_\perf^{\ol{G}_{\bar0},\geq0}(\ol\fG{}^\bullet)$ obtained by tensoring with the
polynomial representation $\det(\ol{V}\!_1)$ of $\GL(\ol{V}\!_1)$.
The full embedding $D_\perf^{\ol{G}_{\bar0},\geq0}(\ol\fG{}^\bullet)\hookrightarrow
D_\perf^{\ol{G}_{\bar0}}(\ol\fG{}^\bullet)$ gives rise to an equivalence of colimits
$\on{colim}_\eta D_\perf^{\ol{G}_{\bar0},\geq0}(\ol\fG{}^\bullet)\iso
\on{colim}_\eta D_\perf^{\ol{G}_{\bar0}}(\ol\fG{}^\bullet)\cong D_\perf^{\ol{G}_{\bar0}}(\ol\fG{}^\bullet)$.

Composing the inverse of $\varPhi$ with $\Phi'$ (notation of~Section~\ref{cohmirres}) we obtain
an equivalence $\ol{\Phi}{}'\colon D_\perf^{\ol{G}_{\bar0},\geq0}(\ol\fG{}^\bullet)\iso D_{\bM_\bO}(\Gr)$.
According to~Lemma~\ref{MGL}, the equivalence
$\ol{\Phi}{}'\colon D_\perf^{\ol{G}_{\bar0},\geq0}(\ol\fG{}^\bullet)\iso D_{\bM_\bO}(\Gr)$ intertwines
the endofunctors $\eta$ and $\xi_*$, and hence induces the desired equivalence
$\ol{\Phi}\colon D_\perf^{\ol{G}_{\bar0}}(\ol\fG{}^\bullet)\iso D_{\GL(N-1,\bO)}(\Gr)$ between
the colimits. Theorem~\ref{GL(N-1)} is proved.
$\hfill \Box$

\section{Loop rotation and quantization}
\label{loop rotation}

\subsection{Graded differential operators and convolutions}
\label{loop}
We have $H^\bullet_{\BG_m}(\pt)=\BC[\hbar]$. We consider the algebra $\fD$
of ``graded differential operators'' on $\Hom(V_2,V_1)$: a $\BC[\hbar]$-algebra
generated by $\Hom(V_2,V_1)$ and $\Hom(V_1,V_2)$ with relations $[h,h']=[f,f']=0,\
[h,f]=\langle h,f\rangle\hbar$ for $h,h'\in\Hom(V_2,V_1),\ f,f'\in\Hom(V_1,V_2)$.
It is equipped with the grading $\deg f=\deg h=1,\ \deg\hbar=2$. We denote by
$\fD_{1,1}^\bullet$ this graded algebra viewed as a dg-algebra with trivial differential.
We will also need two more versions of $\fD_{1,1}^\bullet$ differing by the gradings of
generators: in $\fD_{0,2}^\bullet$ we set $\deg f=0,\ \deg h=2$, while in $\fD_{2,0}^\bullet$
we set $\deg f=2,\ \deg h=0$.

Recall the setup and notation of~Section~\ref{conv coh}. So we have another copy $V'_1$
of $V_1$, we rebaptize the algebra $\fD$ of ``graded differential operators'' on
$\Hom(V_2,V_1)$ as $\fD_{21}$, and along with it we consider $\fD_{21'}$ (``graded differential
operators'' on $\Hom(V_2,V'_1)$) and $\fD_{1'1}$ (``graded differential
operators'' on $\Hom(V'_1,V_1)$). Note that $\fD_{21}\cong\fD_{12}$ (``graded differential
operators'' on $\Hom(V_1,V_2)$), and similarly $\fD_{21'}\cong\fD_{1'2}$, and $\fD_{1'1}\cong\fD_{11'}$.

We have morphisms \[\sfm^A\colon\Hom(V'_1,V_2)\times\Hom(V_1,V'_1)\to\Hom(V_1,V_2),\
(A',A'')\mapsto A'A'',\]
\[\sfm^B\colon\Hom(V'_1,V_1)\times\Hom(V_2,V'_1)\to\Hom(V_2,V_1),\ (B'',B')\mapsto B''B'.\]
They give rise to the functors
\begin{multline*}
  D^{\GL(V'_1)\times\GL(V_2)}_\perf(\fD_{1'2})\times
D^{\GL(V_1)\times\GL(V'_1)}_\perf(\fD_{11'})\to D^{\GL(V_1)\times\GL(V_2)}_\perf(\fD_{12}),\\
(\CM_{1'2},\CM_{11'})\mapsto\big(\sfm^A_*(\CM_{1'2}\boxtimes\CM_{11'})\big)^{\GL(V'_1)},
\end{multline*}
\begin{multline*}
  D^{\GL(V'_1)\times\GL(V_1)}_\perf(\fD_{1'1})\times
D^{\GL(V_2)\times\GL(V'_1)}_\perf(\fD_{21'})\to D^{\GL(V_2)\times\GL(V_1)}_\perf(\fD_{21}),\\
(\CM_{1'1},\CM_{21'})\mapsto\big(\sfm^B_*(\CM_{1'1}\boxtimes\CM_{21'})\big)^{\GL(V'_1)}
\end{multline*}
(here $\sfm^A_*,\sfm^B_*$ stand for the direct images in the category of $\fD$-modules).

We will actually need the corresponding functors on the categories of dg-modules
\[\srel{A}*\colon D^{G_{\bar0}}_\perf(\fD_{2,0}^\bullet)\times D^{G_{\bar0}}_\perf(\fD_{2,0}^\bullet)\to
D^{G_{\bar0}}_\perf(\fD_{2,0}^\bullet),\
\srel{B}*\colon D^{G_{\bar0}}_\perf(\fD_{0,2}^\bullet)\times D^{G_{\bar0}}_\perf(\fD_{0,2}^\bullet)\to
D^{G_{\bar0}}_\perf(\fD_{0,2}^\bullet).\]

The multiplicative group $\BG_m$ acts on $\Gr\times\oV$ as loop rotations.
The goal of this section is the following

\begin{thm}
  \label{main quantum}
  There exist monoidal equivalences of triangulated categories
  \begin{equation}
    \begin{CD}
      \big(D^{G_{\bar0}}_\perf(\fD_{2,0}^\bullet),\ \srel{A}*\big) @>{\sim}>{\Phi^{2,0}_\hbar}>
      \big(D_{!\bG_\bO\rtimes\BC^\times}(\Gr\times\oV),\ \srel!\oast\big)\\
      @V{\wr}V{\varrho_\righ}V @V{\wr}V{\varrho_\righ}V\\
      D^{G_{\bar0}}_\perf(\fD_{1,1}^\bullet) @>{\sim}>{\Phi^{1,1}_\hbar}>
      D_{!*\bG_\bO\rtimes\BC^\times}(\Gr\times\oV)\\
      @V{\wr}V{\varrho_\righ}V @V{\wr}V{\varrho_\righ}V\\
      \big(D^{G_{\bar0}}_\perf(\fD_{0,2}^\bullet),\ \srel{B}*\big) @>{\sim}>{\Phi^{0,2}_\hbar}>
      \big(D_{*\bG_\bO\rtimes\BC^\times}(\Gr\times\oV),\ \srel**\big).
    \end{CD}
  \end{equation}
  (the vertical and the middle row equivalences are {\em not} monoidal).
  The horizontal equivalences commute with the actions of the monoidal spherical Hecke category
  $\Perv_{\bG_\bO\rtimes\BC^\times}(\Gr)\cong\Rep(\GL_N)$ by the left and right convolutions.
\end{thm}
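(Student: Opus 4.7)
\subsection*{Plan of proof of Theorem~\ref{main quantum}}

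My plan is to parallel the proof of Theorem~\ref{main spherical mirabolic} step by step, incorporating the loop-rotation equivariance via a Tate/mixed structure. First, I would introduce the deequivariantized category $D^{\deeq}_{!*\bG_\bO\rtimes\BC^\times}(\Gr\times\oV)$ as in Section~\ref{deeq ext}, and compute the dg-algebra $\RHom(E_0,E_0)$ in this category. Formality and purity should be established by the same argument as in Lemma~\ref{formality}: reduce to $\BF_q$, realize the stalks of $\bar\imath_0^!\IC_{(\blambda,\bmu)}$ as a hyperbolic restriction to a $\BG_m$-fixed component, and invoke purity together with \cite[Lemmas~15 and~5]{bf}. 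The loop rotation now contributes an extra generator in cohomological degree $2$ of weight $2$ (namely $\hbar\in H^\bullet_{\BC^\times}(\pt)$), so the Ext algebra is naturally a graded $\BC[\hbar]$-algebra $\fE_\hbar^\bullet$. As in Lemma~\ref{purity}, formality gives an equivalence $D^{G_{\bar0}}_\perf(\fE_\hbar^\bullet) \iso D_{!*\bG_\bO\rtimes\BC^\times}(\Gr\times\oV)$.

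Next, I would compute $\fE_\hbar^\bullet$ explicitly and identify it with $\fD_{1,1}^\bullet$. The same two geometric classes $h$ and $h^*$ of Section~\ref{calculation} produce subspaces $\Hom(V_1,V_2)$ and $\Hom(V_2,V_1)$ of $\fE_\hbar^1$, yielding a canonical algebra homomorphism $\phi^\bullet_\hbar\colon\fD_{1,1}^\bullet\to\fE_\hbar^\bullet$. The crucial quantum new ingredient is that in the loop-rotation equivariant setting the commutator $[h,h^*]$ is no longer zero; rather, it must equal $\langle h,h^*\rangle\hbar$. This is forced because the localization to $\Isom(V_2,V_1)$, resp.\ $\Isom(V_1,V_2)$, recovers (by the loop-rotation equivariant derived Satake equivalence of \cite{bf}) the corresponding quantum Ext algebra on the Satake side, which is the localized graded Weyl algebra, together with the fact that under equivariant Satake the first Chern class $c_1(\CL)$ of the determinant bundle corresponds to $\hbar\cdot\Id$. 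To conclude $\phi^\bullet_\hbar$ is an isomorphism one runs the specialization argument: at $\hbar=0$ it reduces to the isomorphism of Lemma~\ref{phi isom}, and since both sides are flat over $\BC[\hbar]$, Nakayama (or an additional-grading/purity argument as in the second proof of Proposition~\ref{corestriction}(b)) finishes the job.

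For the monoidal statements, I would repeat verbatim the arguments of Sections~\ref{20 monoidal} and~\ref{proof main}. The kernels on both sides are now dg-modules over the relevant quantum algebras; the isomorphism of the forgotten/restricted kernels comes from Proposition~\ref{corestriction} (whose statement and proof extend directly to the loop-rotation equivariant setting, since the idempotent-comonad argument in (b) and the Tate/purity argument of the second proof are both insensitive to the extra $\BC^\times$-equivariance). The comparison of localized kernels uses the fact that the {\em quantum} derived Satake equivalence of \cite{bf} is already known to be monoidal. The Fourier transform argument of Proposition~\ref{FT} then yields $\Phi^{0,2}_\hbar$ from $\Phi^{2,0}_\hbar$, because $\FT$ is compatible with loop rotation.

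The main obstacle I anticipate is the formality/purity step: one must check that the Weil structure on the $\BG_m$-equivariant Ext groups is still of Tate type and pure of the expected weight, and that the new generator $\hbar$ interacts correctly with the bigrading used in the second proof of Proposition~\ref{corestriction}(b); once purity is in place, the verification that $\phi_\hbar^\bullet$ lands in the commutator subspace $\langle h,h^*\rangle\hbar$ and not in some larger piece is the delicate calculation, but it is pinned down by the known identification $c_1(\CL)\leftrightarrow\hbar\cdot\Id$ on the Satake side together with the localization compatibility. All remaining assertions (commutation with the $\Rep(\GL_N)$-actions, commutativity of the renormalization squares) follow formally as in the classical case.
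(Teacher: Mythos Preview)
Your plan is correct and closely parallels the paper's argument; the skeleton (deequivariantized Ext algebra, formality via purity, identification with the Weyl algebra, then the kernel argument of Section~\ref{20 monoidal} lifted to the loop-rotation setting via a quantum analogue of Proposition~\ref{corestriction}, and Fourier transform for $\Phi^{0,2}_\hbar$) is exactly what the paper does.

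There is one genuine difference worth noting, in the step identifying $\fE_\hbar^\bullet$ with $\fD_{1,1}^\bullet$. You propose to produce the generators $h,h^*$ as in Section~\ref{calculation}, verify the Weyl relation $[h,h^*]=\langle h,h^*\rangle\hbar$ by injecting into the localized algebra (the quantum analogue of Lemma~\ref{injective}) and quoting the loop-rotation Satake of~\cite{bf}, and then conclude by flatness/Nakayama at $\hbar=0$. The paper instead argues more conceptually: it observes that $\fE_\hbar^\bullet/(\hbar)=\fG_{1,1}^\bullet$ inherits a Poisson bracket from the deformation, checks over the two open charts $\Isom(V_2,V_1)\times\Hom(V_1,V_2)$ and $\Hom(V_2,V_1)\times\Isom(V_1,V_2)$ (where~\cite{bf} identifies it with the cotangent bracket), and then invokes the \emph{uniqueness} of the graded $\BC[\hbar]$-algebra whose reduction mod $\hbar$ is $\Sym(\fg_{\bar1}[-1])$ with the standard Poisson bracket. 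Your approach is more hands-on and requires separately justifying the quantum injectivity and the flatness of $\fE_\hbar^\bullet$ over $\BC[\hbar]$; the paper's approach packages both into the single uniqueness statement for graded deformation quantizations, which is cleaner.

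One small slip: you cite Section~\ref{proof main} among the monoidal arguments to be repeated, but that section treats the fusion monoidal structure on $\Phi^{1,1}$, which has no quantum analogue (and indeed Theorem~\ref{main quantum} explicitly says the middle row is \emph{not} monoidal). Only the arguments of Sections~\ref{20 monoidal} and~\ref{fourier} are relevant here.
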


The proof will be given in~Section~\ref{q-Restriction}.

\subsection{Construction of equivalences}
  We set $\fE_\hbar^\bullet:=\Ext^\bullet_{D^{\deeq}_{!*\bG_\bO\rtimes\BG_m}(\Gr\times\ooV)}(E_0,E_0)$
  (a dg-algebra with trivial differential).
Since it is an Ext-algebra in the deequivariantized category, it is automatically
equipped with an action of $\GL_N\times\GL_N=\GL(V_1)\times\GL(V_2)=G_{\bar0}$, and we can
consider the corresponding triangulated category $D^{G_{\bar0}}_\perf(\fE_\hbar^\bullet)$.
Similarly to~Lemma~\ref{purity}, there is a canonical equivalence
$D^{G_{\bar0}}_\perf(\fE_\hbar^\bullet)\iso D_{!*\bG_\bO\rtimes\BG_m}(\Gr\times\oV)$.
It remains to construct an isomorphism $\phi_\hbar^\bullet\colon \fD_{1,1}^\bullet\iso\fE_\hbar^\bullet$.

Note that $\fE_\hbar^\bullet$ is a $\BC[\hbar]$-algebra, and $\fE_\hbar^\bullet/(\hbar=0)=
\fE^\bullet=\fG_{1,1}^\bullet=\Sym(\fg_{\bar1}[-1])$,
so that $\fE^\bullet$ acquires a Poisson bracket from this deformation. We claim that this
Poisson bracket arises from the canonical symplectic form on $\fg_{\bar1}^*$. Indeed,
the isomorphism $\phi^*$ (see the proof of~Lemma~\ref{phi isom}) over
$\Isom(V_2,V_1)\times\Hom(V_1,V_2)$ and $\Hom(V_2,V_1)\times\Isom(V_1,V_2)$ is Poisson.
Here the Poisson structure on these open subsets arises from the deformations
$\fA_\hbar^\bullet,\fB_\hbar^\bullet$ which in turn arise from the loop-rotation-equivariant
Satake category $D_{\bG_\bO\rtimes\BG_m}(\Gr)$. The corresponding Poisson brackets are the
standard ones on the cotangent bundles $T^*\Isom(V_2,V_1),T^*\Isom(V_1,V_2)$ as follows
from~\cite[Theorem~5]{bf}.

Now $\fD_{1,1}^\bullet$ is a unique graded $\BC[\hbar]$-algebra with $\fD_{1,1}^\bullet/(\hbar=0)=
\Sym(\fg_{\bar1}[-1])$ such that the corresponding Poisson bracket on
$\Sym(\fg_{\bar1}[-1])$ is the standard one. Thus $\phi_\hbar^\bullet$ and $\Phi^{1,1}_\hbar$
are constructed.

\subsection{Restriction to $\Gr\times\bV_0$ with loop rotation}
\label{q-Restriction}
Similarly to~Section~\ref{Restriction}, our goal in this section is a description in terms of
equivalence of~Theorem~\ref{main quantum} of the endofunctors
$\bar\jmath_{0*}\bar\jmath{}_0^!,\ \jmath_{0*}\jmath_0^!\colon
D_{!*\bG_\bO\rtimes\BC^\times}(\Gr\times\bV)\to D_{!*\bG_\bO\rtimes\BC^\times}(\Gr\times\bV)$.
To this end we will need the algebra $\fH$ of
``graded differential operators'' on $\Hom(V_1,V_2)\oplus\Hom(V_1,V'_1)\oplus\Hom(V'_1,V_2)$
defined similarly to $\fD$ of~Section~\ref{loop}. It is equipped with the grading where
the generators from $\Hom(V_2,V_1),\Hom(V_1,V_2),\Hom(V'_1,V_2),\Hom(V_2,V'_1)$ have degree 1,
while the generators from $\Hom(V_1,V'_1),\Hom(V'_1,V_1)$ have degrees 2,0 respectively,
and $\deg\hbar=2$.
We denote by $\fH^\bullet$ this graded algebra viewed as a dg-algebra with trivial differential.
If we denote the similar dg-algebras of ``graded differential operators'' on
$\Hom(V_1,V_2),\ \Hom(V_1,V'_1),\ \Hom(V'_1,V_2)$ by
$\fD^\bullet_{12},\ \fD^\bullet_{11'},\ \fD^\bullet_{1'2}$, then
$\fH^\bullet=\fD^\bullet_{12}\otimes\fD^\bullet_{11'}\otimes\fD^\bullet_{1'2}$.
Since the canonical line bundle on an affine space carries a canonical flat connection,
the algebras $\fD^\blt_{ij}$ admit $(\GL(V_i)\x\GL(V_j))$-equivariant anti-involutions.
Thus we can identify $\fH^\bullet\iso\fD^\bullet_{12}\otimes(\fD^\bullet_{11'}\otimes\fD^\bullet_{1'2})^\opp$.

The algebra $\fH^\bullet$ has a cyclic holonomic left dg-module $\fQ^\bullet$ of ``delta-functions
along the subvariety cut out by the equation $A=A'A''$'', see~(\ref{right half}).
We also consider the $\GL(V_1)\x\GL(V_1')$-equivariant cyclic $\fD^\blt_{11'}$-module $\fD^\blt_{11'0}$ corresponding to the $\cD_{11'}:=\cD_{\Hom(V_1,V_1')}$-module  given by \[\cD_{11'}/(\cD_{11'}\otimes\det V_1\otimes\det{}\!^{-1} V_1' )(\det A'')=\Ind_{\CO_{\Hom(V_1,V_1')}}^{\cD_{11'}} ( \CO_{\Hom_{\le N-1}(V_1,V_1')} ).\]


We define the following endofunctors of $D^{G_{\bar0}}_\perf(\fD_{1,1}^\bullet)$:
\begin{multline*}
  D^{G_{\bar0}}_\perf(\fD_{1,1}^\bullet)=D^{\GL(V'_1)\times\GL(V_2)}_\perf(\fD^\bullet_{1'2})\ni M
  \mapsto p_*q^*M\\ :=
\big(\fQ^\bullet\otimes_{\fD^\bullet_{11'}\otimes\fD^\bullet_{1'2}}(\fD^\bullet_{11'}\otimes M)\big)^{\GL(V'_1)}\in
D^{\GL(V_1)\times\GL(V_2)}_\perf(\fD^\bullet_{12})=D^{G_{\bar0}}_\perf(\fD_{1,1}^\bullet)
\end{multline*}
\begin{multline*}
  D^{G_{\bar0}}_\perf(\fD_{1,1}^\bullet)=D^{\GL(V'_1)\times\GL(V_2)}_\perf(\fD^\bullet_{1'2})\ni M
  \mapsto p_{0*}q_0^*M\\ :=
\big(\fQ^\bullet\otimes_{\fD^\bullet_{11'}\otimes\fD^\bullet_{1'2}}(\fD^\bullet_{11'0}\otimes M)\big)^{\GL(V'_1)}\in
D^{\GL(V_1)\times\GL(V_2)}_\perf(\fD^\bullet_{12})=D^{G_{\bar0}}_\perf(\fD_{1,1}^\bullet)
\end{multline*}
Then similarly to~Proposition~\ref{corestriction} one proves
\begin{prop}
  \label{qrestriction}
  \textup{(a)}  There is an isomorphism of functors
  \[\bar\jmath_{0*}\bar\jmath{}_0^!\circ\Phi^{1,1}_\hbar\simeq
  \Phi^{1,1}_\hbar\circ p_*q^*\colon D^{G_{\bar0}}_\perf(\fD_{1,1}^\bullet)\to
  D_{!*\bG_\bO\rtimes\BC^\times}(\Gr\times\oV).\]

  \textup{(b)} There is an isomorphism of functors \[\jmath_{0*}\jmath_0^!\circ\Phi^{1,1}_\hbar\simeq
  \Phi^{1,1}_\hbar\circ p_{0*}q_0^*\colon D^{G_{\bar0}}_\perf(\fD_{1,1}^\bullet)\to
  D_{!*\bG_\bO\rtimes\BC^\times}(\Gr\times\oV).\]
\end{prop}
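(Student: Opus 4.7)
The proof runs closely parallel to that of Proposition~\ref{corestriction}, with the graded commutative symmetric algebras replaced throughout by their Weyl quantizations. At $\hbar = 0$, the dg-algebras $\fD^\bullet_{1,1}$ and $\fH^\bullet$ specialize to $\fG^\bullet_{1,1}$ and $\BC[\CH]^\bullet_{1,1}$; the holonomic dg-$\fH^\bullet$-module $\fQ^\bullet$ specializes to $\BC[\CQ^A]^\bullet_{1,1}$ (the structure sheaf of $\{A = A'A''\}$ with its $G_\CQ$-equivariant structure); and $\fD^\bullet_{11'0}$ specializes to the classical kernel defining $p_{0*}q_0^*$ in Proposition~\ref{corestriction}(c). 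Consequently, the quantum endofunctors $p_*q^*$ and $p_{0*}q_0^*$ of $D^{G_{\bar 0}}_\perf(\fD^\bullet_{1,1})$ degenerate at $\hbar = 0$ to the classical ones, while forgetting loop-rotation equivariance reduces $\bar\jmath_{0*}\bar\jmath_0^!$ and $\jmath_{0*}\jmath_0^!$ to their constructible counterparts.

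For part (a), I would rerun the argument of Proposition~\ref{corestriction}(a). The characterization of the essential image of $\bar\jmath_{0*}$ as being generated by $\{\IC_{(\bnu,\bmu)} \mid \bnu \leq 0\}$ is purely topological and survives loop-rotation equivariance. The corresponding coherent subcategory $D^{G_{\bar 0},\geq}_\perf(\fD^\bullet_{1,1})$ generated by $V_{1,\blambda}\otimes\fD^\bullet_{1,1}\otimes V_{2,\bmu}$ with $\blambda \geq 0$ is characterized by the same Koszul equivalence argument as in the classical proof, applied now to the $\BC[\hbar]$-linear Koszul duality: one checks that for a free dg-$\fD^\bullet_{1,1}$-module $\CN$, the $\GL(V_1)$-action on the fiber $z_0^*(p_*q^* \CN)$ (obtained by setting all generators and $\hbar$ to zero) remains polynomial, coming from the residual action on functions of $A'' \in \Hom(V_1, V'_1)$. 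The adjunction isomorphism amounts to a D-module analogue of Lemma~\ref{IT}(c); I would deduce it from the classical case by a flatness-in-$\hbar$ argument, using that both sides are free $\BC[\hbar]$-modules by purity results analogous to Lemma~\ref{formality} in the loop-rotation equivariant setting.

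For part (b), once (a) is established, the argument is formally identical to that of Proposition~\ref{corestriction}(c): one compares the distinguished triangles
\[
\bigl(\bar\jmath_{-1*}\bar\jmath_{-1}^! \to \bar\jmath_{0*}\bar\jmath_0^! \to \jmath_{0*}\jmath_0^!\bigr) \circ \Phi^{1,1}_\hbar
\]
and
\[
\Phi^{1,1}_\hbar \circ \bigl(\det V_1 \otimes (p_* q^*) \circ (\det\!{}^{-1} V'_1 \otimes -) \to p_* q^* \to p_{0*} q_0^*\bigr),
\]
where on the coherent side the middle morphism is multiplication by $\det A'' \in \fD^\bullet_{11'}$. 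Since $\det A''$ is $\hbar$-independent, it lifts the classical morphism directly, and the short exact sequence $0 \to \cD_{11'} \xrightarrow{\det A''} \cD_{11'} \to \fD^\bullet_{11'0} \to 0$ (up to the character twist) provides the coherent triangle.

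The main obstacle is the quantum adjunction isomorphism underlying (a). Rather than attempting a direct D-module invariant theory calculation, the cleanest route is to observe that both $\bar\jmath_{0*}\bar\jmath_0^! \circ \Phi^{1,1}_\hbar$ and $\Phi^{1,1}_\hbar \circ p_*q^*$ are idempotent comonads on $D_{!*\bG_\bO \rtimes \BC^\times}(\Gr \times \bV)$ equipped with counits to $\Phi^{1,1}_\hbar$; the uniqueness argument from the second proof of Proposition~\ref{corestriction}(b) then identifies them canonically, reducing to the verification at $\hbar = 0$ that is already proven. This strategy bypasses the delicate flat deformation issue while retaining the structural content of the classical argument.
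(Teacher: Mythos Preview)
Your proposal is correct and takes essentially the same approach as the paper: the paper's entire proof is the single clause ``similarly to~Proposition~\ref{corestriction} one proves,'' and your elaboration of how the argument quantizes (specialization at $\hbar=0$, the polynomial-action check for the image of $p_*q^*$, and the triangle comparison for part~(b)) is exactly the intended content. One small caution on your final paragraph: the comonad-uniqueness argument from the first proof of Proposition~\ref{corestriction}(b) establishes uniqueness of a morphism between two given idempotent comonads compatible with counits, but it does not by itself prove that the quantum $p_*q^*$ \emph{is} an idempotent comonad with the same image as $\bar\jmath_{0*}\bar\jmath_0^!$; that still requires the adjunction computation (your flatness-in-$\hbar$ route via purity is the cleaner way to get it).
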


Now using~Proposition~\ref{qrestriction} in place of~Proposition~\ref{corestriction},
one checks the monoidal properties of $\Phi^{2,0}_\hbar$ and $\Phi^{0,2}_\hbar$ similarly
to the monoidal properties of $\Phi^{2,0}$ and $\Phi^{0,2}$ checked in~Sections~\ref{20 monoidal}
and~\ref{fourier} respectively. This completes the proof of~Theorem~\ref{main quantum}.
$\hfill \Box$

\subsection{$\fD$-modules and $D_{\GL(N-1,\bO)\rtimes\BC^\times}(\Gr)$}
\label{loop N-1}
Similarly to~Section~\ref{loop} we consider the dg-algebra (with trivial differential)
$\ol\fD{}^\bullet$ of ``graded differential operators'' on $\Hom(V_2,\ol{V}\!_1$). Then,
similarly to~Theorem~\ref{GL(N-1)}, one proves

\begin{thm}
  \label{loop GL(N-1)}
  There exists an equivalence of triangulated categories
  $\ol\Phi{}_\hbar\colon D^{\ol{G}_{\bar0}}_\perf(\ol\fD{}^\bullet)\iso
  D_{\ol\bG_\bO\rtimes\BC^\times}(\Gr)$ commuting with the left convolution action of the monoidal
  spherical Hecke category $\Perv_{\GL(N-1,\bO)\rtimes\BC^\times}(\Gr_{N-1})\cong\Rep(\GL_{N-1})$ and with
the right convolution action of the monoidal
  spherical Hecke category $\Perv_{\GL(N,\bO)\rtimes\BC^\times}(\Gr_N)\cong\Rep(\GL_N)$.
\end{thm}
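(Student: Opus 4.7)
The plan is to mimic the proof of Theorem~\ref{GL(N-1)} given in~\S\S\ref{mirres}--\ref{coda}, replacing at each stage the classical dg-algebras $\fG^\bullet_{1,1}, \ol\fG{}^\bullet$ and coherent categories by their ``graded asymptotic $D$-module'' counterparts $\fD^\bullet_{1,1}, \ol\fD{}^\bullet$. Theorem~\ref{main quantum}, together with Proposition~\ref{qrestriction}, provides the key quantum analog of the Mirabolic Satake description and of Proposition~\ref{corestriction}(c), so all the building blocks of Section~\ref{coh realization N-1} have quantum analogs.

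First, let $D^{G_{\bar0},\geq}_\perf(\fD^\bullet_{1,1})\subset D^{G_{\bar0}}_\perf(\fD^\bullet_{1,1})$ be the subcategory generated by the free modules $V_{1,\blambda}\otimes\fD^\bullet_{1,1}\otimes V_{2,\bmu}$ with $\blambda$ a partition. Proposition~\ref{qrestriction}(a) shows that $\Phi^{1,1}_\hbar$ restricts to an equivalence with $D_{!*\bG_\bO\rtimes\BC^\times}(\Gr\times\bV_0)$, and Proposition~\ref{qrestriction}(b) then equips $p_{0*}q_0^*$ with the structure of an idempotent monad on $D^{G_{\bar0},\geq}_\perf(\fD^\bullet_{1,1})$, whose Kleisli category $D(p_{0*}q_0^*)$ is identified via $\Phi^{1,1}_\hbar$ with $D_{\bG_\bO\rtimes\BC^\times}(\Gr\times(\bV_0\setminus t\bV_0))\cong D_{\bM_\bO\rtimes\BC^\times}(\Gr)$. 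The last identification uses that $\bV_0\setminus t\bV_0$ is a single $\bG_\bO$-orbit with stabilizer $\bM_\bO$, and it is manifestly $\BG_m$-equivariant for loop rotations.

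Second, I would construct the quantum analog $\varPhi_\hbar\colon D(p_{0*}q_0^*)\to D^{\ol{G}_{\bar0},\geq0}_\perf(\ol\fD{}^\bullet)$ of the functor $\varPhi$ from~\S\ref{cohmirres}, using the $D$-module push–pull along the maps $\sfp\colon\CP\to\Pi\fg^*_{\bar1}$ and $\sq\colon\CP\to\Pi\ol\fg{}^*_{\bar1}$ of that subsection. Concretely, one replaces $\BC[\CP]$ by the corresponding graded $\fD$-module of ``delta functions along the subvariety cut out by $A=\ol A b,\ \ol B=bB,\ a=B\ol A$'', viewed as a dg-bimodule (with trivial differential) over $\fD^\bullet_{1,1}\otimes(\ol\fD{}^\bullet)^\opp$; then $\varPhi_\hbar$ is given by tensoring and taking $\GL(V_1)$-invariants. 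Full faithfulness of $\varPhi_\hbar$ on generators reduces, exactly as in~\S\ref{cohmirres}, to three invariant-theoretic identifications of graded asymptotic $D$-modules on spaces of homomorphisms. These are the Rees deformations of the isomorphisms in Lemma~\ref{IT}, and the classical isomorphisms quantize to $\fD$-module isomorphisms because the relevant morphisms between affine $\GL$-representations are flat over the invariant subalgebra, so taking $\GL$-invariant sections commutes with the Rees construction. Composing $\varPhi_\hbar^{-1}$ with the identification $D(p_{0*}q_0^*)\simeq D_{\bM_\bO\rtimes\BC^\times}(\Gr)$ yields an equivalence $\ol\Phi{}'_\hbar\colon D^{\ol{G}_{\bar0},\geq0}_\perf(\ol\fD{}^\bullet)\iso D_{\bM_\bO\rtimes\BC^\times}(\Gr)$ compatible with the left $\Perv_{\GL(N-1,\bO)\rtimes\BG_m}(\Gr_{N-1})$- and right $\Perv_{\GL(N,\bO)\rtimes\BG_m}(\Gr_N)$-actions via the loop-rotation-equivariant version of Lemma~\ref{MGL}.

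Finally, extend $\ol\Phi{}'_\hbar$ to the desired equivalence $\ol\Phi_\hbar$ by the colimit procedure of~\S\ref{coda}. The loop-rotation action commutes with the action of the Cartan element $\on{diag}(1,t,\dots,t)\in\bT_\bF$, so Lemma~\ref{loca} carries over verbatim to the $\BG_m$-equivariant setting: $\xi_*$ is an autoequivalence of $D_{\GL(N-1,\bO)\rtimes\BG_m}(\Gr)$ and every object is stably in the image of $\Res^{\GL(N-1,\bO)}_{\bM_\bO}$ after sufficiently many applications of $\xi_*$. On the coherent side the auto-equivalence is tensoring with $\det(\ol V_1)$ (placed in the appropriate internal degree with respect to $\hbar$), and the inclusion $D^{\ol{G}_{\bar0},\geq0}_\perf(\ol\fD{}^\bullet)\hookrightarrow D^{\ol{G}_{\bar0}}_\perf(\ol\fD{}^\bullet)$ induces an equivalence on colimits. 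Passing to colimits then yields $\ol\Phi_\hbar$ with the required Hecke compatibilities. The main obstacle is verifying the quantum invariant theory in the second step: unlike the symmetric-algebra case of Lemma~\ref{IT}, we must check that the push-forward of $\fD$-modules along the relevant quotient maps produces no higher cohomology and gives exactly the quantum-symmetric ring of invariants; this ultimately follows by flatness in $\hbar$ from the classical statement, but requires some care to implement in the dg/equivariant framework.
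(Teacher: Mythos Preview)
Your proposal is correct and follows essentially the same approach as the paper. The paper's own argument for Theorem~\ref{loop GL(N-1)} is nothing more than the single sentence ``similarly to~Theorem~\ref{GL(N-1)}, one proves,'' and what you have written is precisely a detailed execution of that prescription: replace $\fG^\bullet_{1,1},\ol\fG{}^\bullet$ by $\fD^\bullet_{1,1},\ol\fD{}^\bullet$, invoke Theorem~\ref{main quantum} and Proposition~\ref{qrestriction} in place of Theorem~\ref{main spherical mirabolic} and Proposition~\ref{corestriction}, and then run the mirabolic restriction and colimit arguments of \S\S\ref{mirres}--\ref{coda} in the $\BG_m$-equivariant setting.
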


\end{document}